\documentclass[a4paper,12pt]{article}
\usepackage{fullpage} 
\usepackage[latin2]{inputenc}
\usepackage[english]{babel}

\bibliographystyle{amsplain}

\usepackage{tikz}

\usepackage{dsfont}

\usepackage{amsmath}
\usepackage{amsthm}
\usepackage{amssymb}
\usepackage{amsfonts}

\usepackage{fancyhdr}

\usepackage{framed}

\usepackage{graphics}
\usepackage{graphicx}

\usepackage{t1enc}

\DeclareMathOperator*{\esssup}{ess\,sup}

\DeclareMathOperator{\sgn}{ sgn}

\frenchspacing

\theoremstyle{definition}
%\theoremnumbering{arabic}
\newtheorem{defi}{Definition}

\theoremstyle{plain}
\newtheorem{tetel}{Theorem}
\newtheorem{allitas}{Statement}
\newtheorem{lemma}{Lemma}

\theoremstyle{remark}
\newtheorem{megj}{Remark}
\newtheorem{lepes}{Step}
\newtheorem{lepess}{Step}

%\theoremnumbering{Alph}
%\newtheorem{mastetel}[tetel]{Theorem}

%\linespread{1.24}

%\hoffset -1in
%\voffset -1in
%\oddsidemargin 35mm
%\textwidth 150mm
%\topmargin 15mm
%\headheight 10mm
%\headsep 5mm
%\textheight 237mm

\title{Weighted $\gamma$-$K$-functional and $\gamma$-Modulus of Smoothness on the Semiaxis\footnote{\emph{Key words and phrases}: $\gamma$-relative differentiation, weighted $K$-functional, weighted modulus of smoothness, Laguerre-weight, \emph{2000 Mathematics Subject Classification}: 26A15, 26A24, 41A10, 41A17.}}
\author{Zolt\'an Mark\'o\footnote{e-mail: marzol@math.bme.hu} \\
Department of Analysis, \\
Budapest University of Technology and Economics, \\
H-1521 Budapest, POB 91, Hungary}

\begin{document}

\maketitle

%%%%%%%%%%%%%%%%%%%%%%%%KIVONAT%%%%%%%%%%%%%%%%%%%%%%%%
\begin{abstract}

\par

In this paper we investigate the $\gamma$-relative differentiation by the motivation of amending the order of the weighted polynomial approximation on the semiaxis for certain functions. With the help of this we give some definitions of generalized Sobolev spaces, $K$-functionals and moduli of smoothness. We prove theorems for estimating these things with each other, in the case of first order we prove equivalence. We remark some possible applications and other generalizations too.
%A cikkben a Laguerre-súllyal ellátott $(0,\infty)$ intervallumon való polinomközelítés általánosítása által motiválva a $\gamma$-relatív differenciálással foglalkozunk, majd ennek segítségével megadunk egy általánosított Szoboljev-tér, illetve új $K$-funkcionál és simasági modulus definíciót. Ezek egymással történő becslésére igazolunk tételeket, elsőrendű modulus és $K$-funkcionál esetén ekvivalenciát. Kitérünk néhány alkalmazási lehetőségre is.

\end{abstract}

%%%%%%%%%%%%%%%%%%%%%%%%BEVEZETÉS%%%%%%%%%%%%%%%%%%%%%%%%

\section{Introduction}

Polynomial approximation is a traditional area in approximation theory, in the course of this one might ask the question of the order of approximation. That is, if $X$ is a Banach space of functions with finite domains of $\mathds{R}$, and $P_n$ is the space of polynomials of degree at most $n$ on $X$, then the error of the best polynomial approximation for a given $f \in X$ function is:
%Az approximációelmélet hagyományosan vizsgált területe a polinomokkal való közelítés, melynek során felmerül a közelítés rendjének kérdése. Ez alatt a következőt értjük: ha $X$ véges intervallumon értelmezett függvények Banach-tere, és $P_n$ jelöli a legfeljebb $n$-edfokú polinomok alterét $X$-ben, akkor egy adott $f \in X$-et egyenletesen legjobban közelítő polinom hibája:
\begin{equation*} %\label{467}
	E_n(f) := \inf_{p_n \in P_n} \| f - p_n \|.
\end{equation*}
Naturally the order of the error depends on the properties of the function. One of the basic ideas is that the order of the approximatibility can be connected to the smoothness properties of $f$, which can be characterized with the modulus of smoothness. This type of theorems are the Jackson-, Bernstein-, Stechkin-type theorems and saturation theorems (in the simplest trigonometric case see \cite{devore1993}, \cite{Jackson1930} and \cite{Stechkin1951}).
%A hiba nagyságrendje természetesen függ az $f$ függvény tulajdonságaitól. Az egyik alapvető gondolat, hogy a nagyságrendre $f$ simasági tulajdonságai alapján adjunk becslést. Ehhez definiáljuk a folytonossági, illetve simasági modulust, és ennek se\-gít\-sé\-gé\-vel adunk felső becslést $E_n(f)$-re. Az ilyen jellegű állításokat \emph{Jackson-típusú}, a fordított irányú (inverz) becsléseket pedig \emph{Stechkin-típusú tételeknek} nevezzük (a legegyszerűbb, trigonometrikus esetben \cite{devore1993}, \cite{Jackson1930}, illetve \cite{Stechkin1951}).

The $K$-functional is a useful tool in several areas of mathematics, e. g. functional analysis, harmonic analysis and the theory of ordinary differential equations. In 1969 J. Peetre \cite{Peetre1969} drew attention to the application of approximation theory, he used the $K$-functional for interpolating operators.

%To estimate the modulus of smoothness is the J. Peetre's $K$-functional a good tool, % which was introduced by J. Peetre \cite{peetre1963}, 
%namely in a lot of cases the $K$-functional and the modulus of smoothness are equivalent%, that is, they can be estimated with the constant-fold of each other 
%(in the most basic case see \cite{devore1993}).% In the classical cases $E_n(f)$ can be estimated with the $K$-functional, and because of the equivalence it can be estimated with the modulus of smoothness. In the inverse theorems at first we estimate the modulus of smoothness with the $K$-functional, then this with standard computation with the error of best polynomial approximation (this type of proofs can be see in \cite{ditzian1987}, \cite{Mhaskar1996} books).
The method developed in the last 40 years is applicable to generalize the classical Jackson-, Bernstein-, etc. type theorems for several Banach spaces: the technique of the proof is based on the equivalence of the $K$-functional and the modulus of smoothness (this type of proofs can be seen in \cite{ditzian1987}, \cite{Mhaskar1996} books). Once the classical results were there, several further generalizations were proved in the topic: e. g. Dekel \cite{Dekel2010} proved equivalence over convex domains, over the $n$-dimensional sphere Rustamov \cite{Rustamov1992} proved theorems.
%Using this equivalence is a usual method to prove Jackson- and Bernstein-type theorems (this type of proofs can be see in \cite{ditzian1987}, \cite{Mhaskar1996} books). After the classical results were much more ge\-ne\-ra\-li\-za\-ti\-ons proved in the topic: for example Dekel \cite{Dekel2010} proved equivalence over konvex domains, over the $n$-dimensional sphere Rustamov \cite{Rustamov1992} proved theorems.

In the last decades polynomial approximation was investigated in weighted spaces, so the introduction of the weighted moduli and $K$-functionals was inevitable (for example \cite{Ditzian1997}, \cite{ditzian1987}, \cite{Ky1993}).

One of the most interesting case (which involves the problem of finite and infinite) is the $(0,\infty)$ interval equipped with the $w_{\alpha}(x) = x^{\alpha} e^{-x}$ Laguerre weight, for which De Bonis, Mastroianni and Viggiano \cite{DeBonis2002} proved equivalence theorems. % with the $w_{\alpha}(x) = x^{\alpha} e^{-x}$ Laguerre weight, Mastroianni and Szabados \cite{Mastroianni2007} proved theorems with the $w_{\alpha,\beta}(x) = x^{\alpha} e^{-x^{\beta}}$ generalized Laguerre weight.
%The motivation of latter article was the estimation of the term
%\begin{equation} \label{565}
%	E_n(f)_{w_{\alpha},p} := \inf_{p_n \in P_n} \| (f - p_n) w_{\alpha} \|_{L^p}, \ \ \ 1 \leq p \leq \infty,
%\end{equation}
%that is, they investigated the error of best weighted polynomial approximation.
%Súlyozott approximáció esetén a hagyományos simasági modulus helyett célszerű alkalmasabbat választani (pl. \cite{Ditzian1997}, \cite{ditzian1987}). Ditzian és Totik \cite{ditzian1987} könyvükben a simasági modulus és a $K$-funkcionál egy újfajta definícióját adták, a könyvben szereplő bizonyítási eljárások, illetve a bizonyítások alapvető módszereit azóta is sokan al\-kal\-mazták. Mi a következőkben a $(0,\infty)$ intervallumot tekintjük. Ezen többféle súly\-függ\-vényt is megadhatunk, De Bonis, Mastroianni és Viggiano \cite{DeBonis2002} a $w_{\alpha}(x) = x^{\alpha} e^{-x}$ Laguerre-súllyal, míg Mastroianni és Szabados \cite{Mastroianni2007} a $w_{\alpha,\beta}(x) = x^{\alpha} e^{-x^{\beta}}$ ál\-ta\-lá\-no\-sí\-tott Laguerre-súllyal igazolt ekvivalencia-tételeket a megfelelő súlyozott $K$-funkcionálra és simasági modulusra. Ez utóbbi cikk motivációja az
%\begin{equation} \label{565}
%	E_n(f)_{w_{\alpha},p} := \inf_{p_n \in P_n} \| (f - p_n) w_{\alpha} \|_{L^p}, \ \ \ 1 \leq p \leq \infty
%\end{equation}
%kifejezés becslése volt, vagyis a súlyozott téren egyenletesen legjobban közelítő polinom hibáját vizsgálták.

The goal of this paper is to generalize these results with the help of $\gamma$-relative differentiation, which we will define in section 2. 
%Thereinafter we suggest a pssible new investigational direction, which is connected a type of generalized differentiation (so called $\gamma$-relative differentiation). 
%In the course of this we define the differential of a given $f$ function in $x_0$ instead of the classical definiton with the formula
%\begin{equation*} %\label{558}
%	f'_{\gamma}(x_0) := \lim_{h \rightarrow 0} \frac{f(x_0 + h) - f(x_0)}{\gamma(x_0 + h) - \gamma(x_0)},
%\end{equation*}
%where $\gamma$ is a strictly increasing continuous function.

With the introduction of $\gamma$-relative derivation our topic is connected to the following problem: Faber proved that one can not achieve Jackson-order approximation with projection type operators for continuous functions in uniform metric. Since Jackson-type theorems use global smoothness properties of the functions, it is a natural question, whether can one amend the order of approximation, if the local properties of the function are better. One of the possible solutions is that we paste piecewise very smooth functions with a less smooth way, in this case the order of apprimation gets indeed better (\cite{Horv'ath1996}, \cite{Li1995}, \cite{Mastroianni1993}, and \cite{Mastroianni1995}). To achieve this $\gamma$-relative differentiation is a good option, because with by choosing a suitable $\gamma$ the $\gamma$-relative differentiable functions will be just elements of such a function space. These above motivate the introducing of $\gamma$-$K$-functional and $\gamma$-modulus of smoothness, with the characterization of their connection we can get new results in the topic.

In the second section we define the $\gamma$-relative differentiation and give some statement, we construct a suitable $\gamma$. In the third section we give the definitions of some function spaces and the $\gamma$-$K$-functional, $\gamma$-modulus of smoothness. Here we state the main results of this paper, and we draw attention to some other possible definitions and theorems as well. In the last section we prove all our theorems.

%%%%%%%%%%%%%%%%%%%%%%%%$K$-FUNKCIONÁL ÉS SIMASÁGI MODULUS%%%%%%%%%%%%%%%%%%%%%%%%

\section{The $\gamma$-relative Differentiation} \label{478a}

In articles \cite{amat2007} and \cite{yanjie2009} the authors dealt with the interpolation of continuous piecewise smooth functions, with the generalization of differentiation. That is the so called $\gamma$-relative differentiation, with the help of which some functions will be $\gamma$-relative differentiable even if they were not differentiable in the classical sense.
%\cite{amat2007} és \cite{yanjie2009} cikkekben a deriváltfogalom általánosításával folytonos, szakaszonként dif\-fe\-ren\-ci\-ál\-ha\-tó függvények interpolációját vizsgálták. A bevezetésre kerülő deriváltfogalom, az ún. \emph{$\gamma$-relatív differenciálás} annyiban általánosabb, hogy bizonyos függvények $\gamma$-relatív differenciálhatók lesznek akkor is, ha a függvény hagyományos értelemben nem volt deriválható.

\begin{defi} \label{a24}
	\cite{amat2007}, \cite{yanjie2009} Let $I$ be the neighborhood of $x_0 \in \mathds{R}$, and let $\gamma : I \rightarrow \mathds{R}$ continuous and strictly increasing function. We say that the $f : I \rightarrow \mathds{R}$ function is $\gamma$-relative differentiable in $x_0$, if the limit
	\begin{equation*} %\label{a25}
		\lim_{h \rightarrow 0} \frac{f(x_0 + h) - f(x_0)}{\gamma(x_0 + h) - \gamma(x_0)}
	\end{equation*}
	exists and it is finite. Then we use the notation $f'_{\gamma}(x_0)$ for this limit, and we call it the $\gamma$-relative derivation of $f$ in $x_0$. If the $f : I \rightarrow \mathds{R}$ function is $\gamma$-relative differentiable in all points of $I$, then we call $f$ $\gamma$-relative differentiable, and the $f'_{\gamma} : I \rightarrow \mathds{R}$ function is the $\gamma$-relative derivation function of $f$.
\end{defi}

%	Legyen $I$ az $x_0 \in \mathds{R}$ egy környzete, és legyen $\gamma : I \rightarrow \mathds{R}$ folytonos és szigorúan növekvő függvény. Azt mondjuk, hogy az $f : I \rightarrow \mathds{R}$ függvény $\gamma$-relatív differenciálható $x_0$-ban, ha a
%	\begin{equation} \label{a25}
%		\lim_{h \rightarrow 0} \frac{f(x_0 + h) - f(x_0)}{\gamma(x_0 + h) - \gamma(x_0)}
%	\end{equation}
%határérték létezik és véges. Ekkor a határértéket $f'_{\gamma}(x_0)$-lal jelöljük, és $f$ $\gamma$-relatív deriváltjának nevezzük $x_0$-ban. Ha az $f : I \rightarrow \mathds{R}$ függvény $\gamma$-relatív differenciálható $I$ minden pontjában, akkor $f$-et $\gamma$-relatív differenciálhatónak nevezzük, és az $f'_{\gamma} : I \rightarrow \mathds{R}$ a $\gamma$-relatív deriváltfüggvénye $f$-nek.
%\end{defi}

From now on we suppose that $D_f \subset D_{\gamma}$. %With (\ref{a25}) is equivalent:
%\begin{equation} \label{a86}
%		\lim_{x \rightarrow x_0} \frac{f(x) - f(x_0)}{\gamma(x) - \gamma(x_0)} < \infty.
%	\end{equation}
%A továbbiakban tehát mindig feltesszük, hogy $D_f \subset D_{\gamma}$, vagyis ahol $f$ értelmezve van, ott $\gamma$ is.
%(\ref{a25}) a következőképpen is megfogalmazható:
%	\begin{equation} \label{a86}
%		\lim_{x \rightarrow x_0} \frac{f(x) - f(x_0)}{\gamma(x) - \gamma(x_0)} < \infty.
%	\end{equation}
We can define the higher order derivatives recursively: $f_{\gamma}^{(1)} = f'_{\gamma}$, $\ldots$, $f_{\gamma}^{(n)} = (f_{\gamma}^{(n-1)})'_{\gamma}$.
%Megszokott módon rekurzívan definiálhatóak a magasabb $\gamma$-relatív deriváltak is: $f_{\gamma}^{(1)} = f'_{\gamma}$, $\ldots$, $f_{\gamma}^{(n)} = (f_{\gamma}^{(n-1)})'_{\gamma}$.

It is easy to check that many of the properties of the classical differentiation hold for $\gamma$-relative differentiation, e. g. the statements for the differentiation of the sum, product, quotient stay also true. The Leibniz rule is true as well:
%Könnyen igazolható, hogy a hagyományos deriválás sok tulajdonsága igaz a $\gamma$-relatív differenciálásra, így pl. igaz az összeg, szorzat, hányados differenciálására vonatkozó megszokott összefüggés, ahogy a szorzat magasabbrendű deriváltjaira vonatkozó Leibniz-szabály is:

\begin{allitas}[generalization of the Leibniz rule] \label{140}
	Let $\gamma$ strictly increasing continuous function in the neighborhood of $x_0$. If $f$ and $g$ are $n$-times $\gamma$-relative differentiable in $x_0$ ($n \geq 1$), then $f \cdot g$ is differentiable too, and
	\begin{equation*} %\label{141}
		(fg)^{(r)}_{\gamma}(x_0) = \sum_{k=0}^{n} \binom{n}{k} f^{(k)}_{\gamma}(x_0) \cdot g^{(n-k)}_{\gamma}(x_0).
	\end{equation*}
\end{allitas}

%\begin{allitas}[generalization of the Leibniz rule] \label{140}
%	Legyen $\gamma$ szigorúan monoton növekvő, folytonos függvény az $x_0$ pont egy környezetében. Ha $f$ és $g$ $n$-szer $\gamma$-relatív differenciálhatóak $x_0$-ban ($n \geq 1$), akkor $f \cdot g$ is, és
%	\begin{equation} \label{141}
%		(fg)^{(r)}_{\gamma}(x_0) = \sum_{k=0}^{n} {n \choose k} f^{(k)}_{\gamma}(x_0) \cdot g^{(n-k)}_{\gamma}(x_0).
%	\end{equation}
%\end{allitas}

%The tally of the Rolle, Cauchy and Lagrange mean value theorems can be prove simply too (the first two are already in \cite{yanjie2009} without proof).
%A Rolle-, Cauchy-, Lagrange-féle középértéktételek megfelelői ugyancsak egyszerűen bizonyíthatóak a hagyományos módon (az előbbi kettő állítás \told\cite{yanjie2009}+ban is szerepelt már, bizonyítás nélkül).

The following statements will play a major role later on, in section \ref{566} we will prove them.
%A következő állítások a későbbiek szempontjából fontosak lesznek, \aref{566}. fejezetben bizonyítjuk őket.

\begin{allitas} \label{337}
	Let $\gamma(x) = ax + b$, where $a,b \in \mathds{R}$, $a \neq 0$. Then $f$ is $r$-times $\gamma$-relative differentiable on $I$ $\Leftrightarrow$ $f$ is $r$-times differentiable on $I$, and for all $x \in I$
	\begin{equation*} %\label{338}
		f^{(r)}_{\gamma}(x) = \frac{1}{a^r} f^{(r)}(x).
	\end{equation*} 
\end{allitas}
%\begin{allitas} \label{337}
%	Legyen $\gamma(x) = ax + b$, ahol $a,b \in \mathds{R}$, $a \neq 0$. Ekkor $f$ $r$-szer $\gamma$-relatív differenciálható $I$-n $\Leftrightarrow$ $f$ $r$-szer differenciálható $I$-n, és minden $x \in I$ esetén
%	\begin{equation} \label{338}
%		f^{(r)}_{\gamma}(x) = \frac{1}{a^r} f^{(r)}(x).
%	\end{equation} 
%\end{allitas}

\begin{allitas} \label{17}
	$f$ is $r$-times $\gamma$-relative differentiable on $I$ $\Leftrightarrow$ $f \circ \gamma^{-1}$ is $r$-times differentiable on $\gamma(I)$. Then, if $y \in \gamma(I)$:
%	\begin{equation} \label{18}
		$(f_{\gamma}^{(r)} \circ \gamma^{-1})(y) = (f \circ \gamma^{-1})^{(r)}(y)$.
%	\end{equation}
\end{allitas}
%\begin{allitas} \label{17}
%	$f$ $r$-szer $\gamma$-relatív differenciálható $I$-n $\Leftrightarrow$ $f \circ \gamma^{-1}$ $r$-szer differenciálható $\gamma(I)$-n. Ekkor, ha $y \in \gamma(I)$, akkor
%	\begin{equation} \label{18}
%		(f_{\gamma}^{(r)} \circ \gamma^{-1})(y) = (f \circ \gamma^{-1})^{(r)}(y).
%	\end{equation}
%\end{allitas}

\begin{allitas} \label{25}
	$f \circ \gamma$ is $r$-times $\gamma$-relative differentiable on $I$ $\Leftrightarrow$ $f$ is $r$-times differentiable on $\gamma(I)$. Then, if $x \in I$:
%	\begin{equation} \label{26}
		$(f \circ \gamma)^{(r)}_{\gamma}(x) = (f^{(r)} \circ \gamma)(x)$.
%	\end{equation}
\end{allitas}
%\begin{allitas} \label{25}
%	$f \circ \gamma$ $r$-szer $\gamma$-relatív differenciálható $I$-n $\Leftrightarrow$ $f$ $r$-szer differenciálható $\gamma(I)$-n. Ekkor, ha $x \in I$, akkor
%	\begin{equation} \label{26}
%		(f \circ \gamma)^{(r)}_{\gamma}(x) = (f^{(r)} \circ \gamma)(x).
%	\end{equation}
%\end{allitas}

\begin{defi} \label{561}
	If $p_n $ is a polynomial of degree at most $n$, $\gamma : I \rightarrow \mathds{R}$ is a continuous, strictly increasing function, then $p_n \circ \gamma : I \rightarrow \mathds{R}$ function is a $\gamma$-polynomial of degree $n$.
\end{defi}
%\begin{defi}[$\gamma$-polinom] \label{561}
%	Ha $p_n $ egy legfeljebb $n$-edfokú polinom, $\gamma : I \rightarrow \mathds{R}$ folytonos, szigorúan monoton növekvő függvény, akkor $p_n \circ \gamma : I \rightarrow \mathds{R}$ függvényt $n$-edfokú $\gamma$-polinomnak nevezzük.
%\end{defi}

By the  statement \ref{25}, if $p_m$ is a polynomial of degree at most $m$, then
	\begin{equation} \label{452}
		(p_m \circ \gamma)^{(r)}_{\gamma} = 0
	\end{equation}
if $r \geq m+1$.%, so the $r$th $\gamma$-relative derivation of a $\gamma$-polynomial of degree at most $m$ is zero, if $r \geq m+1$.
%\Aref{25}. állítás szerint, ha $p_m$ egy legfeljebb $m$-edfokú polinom, akkor
%	\begin{equation} \label{452}
%		(p_m \circ \gamma)^{(r)}_{\gamma} = 0,
%	\end{equation}
%ha $r \geq m+1$, vagyis a legfeljebb $m$-edfokú $\gamma$-polinom $r$-edik $\gamma$-relatív deriváltja nulla, ha $r \geq m+1$.

\begin{allitas}[partial integration] \label{22}
	Let us suppose that $f$ and $g$ are real valued functions on $[a,b]$, they are $\gamma$-relative differentiable on $(a,b)$, and $f'_{\gamma}$, $g'_{\gamma}$ are integrable on $[a,b]$. Then
	\begin{equation} \label{23}
		\int_{a}^{b} f'_{\gamma}(x) g(x) \, \mathrm{d} \gamma(x) = (fg)(b) - (fg)(a) - \int_{a}^{b} f(x) g'_{\gamma}(x) \, \mathrm{d} \gamma(x).
	\end{equation} 
\end{allitas}
%\begin{allitas}[parciális integrálás] \label{22}
%	Tegyük fel, hogy $f$ és $g$ $[a,b]$-n értelmezett, valós értékű függvények, $\gamma$-relatív differenciálhatóak $(a,b)$-n, $f'_{\gamma}$, $g'_{\gamma}$ integrálhatóak $[a,b]$-n. Ekkor
%	\begin{equation} \label{23}
%		\int_{a}^{b} f'_{\gamma}(x) g(x) \, \mathrm{d} \gamma(x) = (fg)(b) - (fg)(a) - \int_{a}^{b} f(x) g'_{\gamma}(x) \, \mathrm{d} \gamma(x).
%	\end{equation} 
%\end{allitas}
In the case of the $\gamma$-relative differentiable functions the suitable Taylor $\gamma$-polynomial can be define, with similar error term.
%A $\gamma$-relatív differenciálható függvények esetében is definiálható a Taylor-po\-li\-no\-mok megfelelője, hasonló hibataggal:

\begin{allitas}[generalized Taylor-polynomial] \label{28}
	\cite{yanjie2009} Let us suppose that $f$ is an at least $n+1$-times $\gamma$-relative differentiable function in a neighborhood of point $x_0 \in I$. Let
	\begin{equation*} %\label{29}
		T_{\gamma,n}(x) = \sum_{k=0}^{n} \frac{f_{\gamma}^{(k)}(x_0)}{k!}(\gamma(x) - \gamma(x_0))^k.
	\end{equation*}
	Then there exists a $K$ neighborhood of $x_0$, that if $x \in K$, then $f(x) = T_{\gamma,n}(x) + R_{\gamma,n}(x)$, where
	\begin{equation*} %\label{30}
		R_{\gamma,n}(x) = \frac{f_{\gamma}^{(n+1)}(\xi)}{(n+1)!} (\gamma(x) - \gamma(x_0))^{n+1}, \ \ \ \xi \in K.
	\end{equation*}
\end{allitas}
%\begin{allitas}[általánosított Taylor-polinom] \label{28}
%	\cite{yanjie2009} Tegyük fel, hogy $f$ függvény az $x_0 \in I$ pont egy környezetében legalább $n+1$-szer $\gamma$-relatív differenciálható. Legyen
%	\begin{equation} \label{29}
%		T_{\gamma,n}(x) = \sum_{k=0}^{n} \frac{f_{\gamma}^{(k)}(x_0)}{k!}(\gamma(x) - \gamma(x_0))^k.
%	\end{equation}
%	Ekkor van az $x_0$-nak olyan $K$ környezete, hogy ha $x \in K$, akkor $f(x) = T_{\gamma,n}(x) + R_{\gamma,n}(x)$, ahol
%	\begin{equation} \label{30}
%		R_{\gamma,n}(x) = \frac{f_{\gamma}^{(n+1)}(\xi)}{(n+1)!} (\gamma(x) - \gamma(x_0))^{n+1}, \ \ \ \xi \in K.
%	\end{equation}
%\end{allitas}

We can write the error term with the help of an integral:
%A hibatag kifejezhető integrálalakban is:

\begin{allitas} \label{31}
	Let us suppose that $f$ is a function $\gamma$-relative differentiable at least $r$-times in a $K$ neighborhood of point $x_0 \in I$. Then, if $x \in K$:
	\begin{equation*} %\label{32}
		f(x) - T_{\gamma,r-1}(x) = \int_{x_0}^{x} f_{\gamma}^{(r)}(t) \frac{(\gamma(x) - \gamma(t))^{r-1}}{(r-1)!} \, \mathrm{d} \gamma(t).
	\end{equation*}
\end{allitas}
%\begin{allitas} \label{31}
%	Tegyük fel, hogy $f$ legalább $r$-szer $\gamma$-relatív differenciálható $x_0 \in I$ egy $K$ környezetében. Ekkor, ha $x \in K$:
%	\begin{equation} \label{32}
%		f(x) - T_{\gamma,r-1}(x) = \int_{x_0}^{x} f_{\gamma}^{(r)}(t) \frac{(\gamma(x) - \gamma(t))^{r-1}}{(r-1)!} \, \mathrm{d} \gamma(t).
%	\end{equation}
%\end{allitas}

Let $\{ a_1,a_2,\ldots,a_N \} \subset \mathds{R}^+$ be a system of points such that $0 < a_1 < a_2 < \ldots < a_N < \infty$. From articles \cite{amat2007} and \cite{yanjie2009} we construct a $\gamma$ function which improves the smoothness properties of a function of these points. To do so, we give a function with relatively simple structure, which is strictly increasing, non-negative and differentiable on $\mathds{R}^+ \setminus \{ a_1,a_2,\ldots,a_N \}$, and
	\begin{enumerate}
		\item $\gamma'(a_k) = \infty$, if $k=1,\ldots,N$.
	\end{enumerate}
It is assumed that
	\begin{enumerate}
		\setcounter{enumi}{1}
		\item $\gamma(0) = 0$, and
		\item $\gamma$ is linear, if $a_N + 1 \leq x$,
	\end{enumerate}
	because for big $x$-es the behavior of the weight function will be dominant, so linearity is a natural assumption. We can get a function with the conditions above in several ways, for example let $\gamma$ on $[t_k,t_{k+1}]$ be the corresponding transformation of the arcus sine function.
	
	An another way to constuct a right function is to write $\gamma$ in the form $\gamma := \sum_{k=1}^{N} \gamma_k$, where $\gamma_k$ is a strictly increasing, non-negative, continuous function, which is differentiable on $\mathds{R}^+ \setminus \{ a_k \}$ and $\gamma'(a_k) = \infty$. For our goal this definition suits better, more specifically let $0 < \beta_k < 1$,

%Now let  $f : [0,\infty) \rightarrow \mathds{R}$ continuous function, which is differentiable except the point $\{ a_1,a_2,\ldots,a_N \}$ ($0 < a_1 < a_2 < \ldots < a_N < \infty$). We constuct such a $\gamma$ function, that $f$ will be $\gamma$-relative differentiable on $(0,\infty)$. Hence we define such a $\gamma : [0, \infty) \rightarrow [0, \infty)$ strictly increasing and continuous function, that
%\begin{itemize}
%	\item $\gamma(0) = 0$,
%	\item $\gamma'(a_k) = \infty$, $k=1,\ldots,N$, and
%	\item $\gamma(x) = \mathcal{C}_1 x + \mathcal{C}_2$, if $x \geq a_N + 1$.
%\end{itemize}
%Legyen $f : [0,\infty) \rightarrow \mathds{R}$ folytonos függvény, mely az $\{ a_1,a_2,\ldots,a_N \}$ pontok ki\-vé\-te\-lé\-vel differenciálható ($0 < a_1 < a_2 < \ldots < a_N < \infty$). Megadunk egy olyan $\gamma$ függvényt, mellyel $f$ $\gamma$-relatív differenciálható lesz $(0,\infty)$-n. Ehhez olyan $\gamma : [0, \infty) \rightarrow [0, \infty)$ monoton növekvő és folytonos függvényt konstruálunk, melyre
%\begin{itemize}
%	\item $\gamma(0) = 0$,
%	\item $\gamma'(a_k) = \infty$, $k=1,\ldots,N$, illetve
%	\item $\gamma(x) = \mathcal{C}_1 x + \mathcal{C}_2$, ha $x \geq a_N + 1$.
%\end{itemize}
%\begin{allitas} \label{1}
%	We get an adequate $\gamma$ function, if with $0 < \beta_k < 1$ we regard the
	\begin{equation*} %\label{2}
		\gamma_k(x) := |x - a_k|^{\beta_k} \sgn(x-a_k)
	\end{equation*}
	and
%	functions ($k=1,\ldots,N$), and let $\gamma^*$ the sum of them. Then we get such a function, which has infinite derivative in all points of the set $\{a_1, \ldots, a_N \}$, is strictly increasing and continuous. The $\gamma_0(x) := \gamma^*(x) - \gamma^*(0)$ function will fulfill the first two assumption. With this construction we get the following function:
	\begin{equation*} %\label{3}
		\gamma_0(x) = \sum_{k=1}^{N} |x-a_k|^{\beta_k} \mathrm{sgn} \, (x-a_k) + \sum_{k=1}^{N} a_k^{\beta_k}.
	\end{equation*}
	Let us define then $\gamma$ function as follows:
	\begin{equation} \label{203}
		\gamma(x) := \left\{
			\begin{array}{rl}
				\gamma_0(x), & \text{if } 0 \leq x \leq a_N+1, \\
				\mathcal{C}_1x+\mathcal{C}_2, & \text{if } a_{N}+1 \leq x.
			\end{array}
		\right.
	\end{equation}
	We choose the $\mathcal{C}_1$ and $\mathcal{C}_2$ factors to make the given $\gamma$ be continuous, which makes it continuously differentiable as well:
	\begin{align}
		\mathcal{C}_1 & := \sum_{k=1}^{N} \beta_k (a_N + 1 - a_k)^{\beta_k - 1}, \label{204} \\
		\mathcal{C}_2 & := \sum_{k=1}^{N} (a_N+1-a_k)^{\beta_k} - \sum_{k=1}^{N} \beta_k (a_{N}+1-a_k)^{\beta_k - 1}(a_N + 1) + \sum_{k=1}^{n} a_k^{\beta_k}. \label{205}
	\end{align}
%	A fenti feltételeknek megfelelő $\gamma$ függvényt kaphatunk pl. akkor, ha $0 < \beta_k < 1$ esetén tekintjük a
%	\begin{equation} \label{2}
%		\gamma_k(x) := |x - a_k|^{\beta_k} \sgn(x-a_k)
%	\end{equation}
%	függvényeket ($k=1,\ldots,N$), és képezzük ezek összegét, legyen ez a függvény $\gamma^*$. Ekkor olyan függvényt kapunk, mely az $\{a_1, \ldots, a_N \}$ halmaz pontjaiban vég\-te\-len deriváltú, monoton növekvő és folytonos. A $\gamma_0(x) := \gamma^*(x) - \gamma^*(0)$ függvény teljesíteni fogja a fenti első két feltételt. Ezzel a konstrukcióval a következő függvény adódik:
%	\begin{equation} \label{3}
%		\gamma_0(x) = \sum_{k=1}^{N} |x-a_k|^{\beta_k} \mathrm{sgn} \, (x-a_k) + \sum_{k=1}^{N} a_k^{\beta_k}.
%	\end{equation}
%	Definiáljuk ekkor $\gamma$ függvényt a következőképpen:
%	\begin{equation} \label{203}
%		\gamma(x) := \left\{
%			\begin{array}{rl}
%				\gamma_0(x), & \text{ha } 0 \leq x \leq a_N+1, \\
%				\mathcal{C}_1x+\mathcal{C}_2, & \text{ha } a_{N}+1 \leq x.
%			\end{array}
%		\right.
%	\end{equation}
%	A $\mathcal{C}_1$ és $\mathcal{C}_2$ együtthatókat úgy választjuk meg, hogy a kapott $\gamma$ folytonos, sőt foly\-to\-no\-san differenciálható legyen:
%	\begin{align}
%		\mathcal{C}_1 & := \sum_{k=1}^{N} \beta_k (a_N + 1 - a_k)^{\beta_k - 1}, \label{204} \\
%		\mathcal{C}_2 & := \sum_{k=1}^{N} (a_N+1-a_k)^{\beta_k} - \sum_{k=1}^{N} \beta_k (a_{N}+1-a_k)^{\beta_k - 1}(a_N + 1) + \sum_{k=1}^{n} a_k^{\beta_k}. \label{205}
%	\end{align}
%\end{megj}

This $\gamma$ will fulfill all of the properties above, from this point we investigate the $\gamma$ given by (\ref{203}).

Later it will be important for us, that the inverse of $\gamma$ be $r$-times differentiable on $(0,\gamma(a_N+1))$ and on $(\gamma(a_N+1),\infty)$. In the second interval the inverse function is linear too, so it is infinitely many times differentiable. On the $(0,\gamma(a_N+1))$ interval the previous $\gamma$ will not be necessarily $r$-times differentiable, but the following statement is true:
%A későbbiekben számunkra fontos lesz, hogy $\gamma$ inverze $r$-szer differenciálható legyen $(0,\gamma(a_N+1))$-en, illetve $(\gamma(a_N+1),\infty)$-en. Ez utóbbi intervallumon az inverz függvény is lineáris, így végtelen sokszor differenciálható. A $(0,\gamma(a_N+1))$ intervallumon az előbb definiált $\gamma$ nem mindig lesz $r$-szer differenciálható, viszont igaz a következő állítás:

\begin{allitas} \label{121}
	Let us regard the $\gamma$ function which is given by formula (\ref{203}). If $0 < \beta_k < \frac{1}{r}$ for all $k=1,\ldots,N$, then $\gamma^{-1}$ is $r$-times differentiable on $(0,\gamma(a_N+1))$. 
\end{allitas}
%\begin{allitas} \label{121}
%	Tekintsük a (\ref{203}) képlettel adott $\gamma$ függvényt. Ha $0 < \beta_k < \frac{1}{r}$ minden $k=1,\ldots,N$ esetén, akkor $\gamma^{-1}$ $r$-szer differenciálható $(0,\gamma(a_N+1))$-en. 
%\end{allitas}

We suppose from here that $0 < \beta_k < \frac{1}{r}$ for all $1 \leq k \leq N$.
%A továbbiakban mindig feltesszük, hogy $0 < \beta_k < \frac{1}{r}$ minden $1 \leq k \leq N$ esetén.

\begin{allitas} \label{197}
	If $0 \leq x \leq a_1$, then
	\begin{equation} \label{198}
		\sum_{k=1}^{N} \beta_k a_k^{\beta_k - 1} \cdot x \leq \gamma(x) \leq \frac{\gamma(a_1)}{a_1} \cdot x.
	\end{equation}
\end{allitas}
%\begin{allitas} \label{197}
%	Ha $0 \leq x \leq a_1$, akkor
%	\begin{equation} \label{198}
%		\sum_{k=1}^{N} \beta_k a_k^{\beta_k - 1} \cdot x \leq \gamma(x) \leq \frac{\gamma(a_1)}{a_1} \cdot x.
%	\end{equation}
%\end{allitas}

It follows from this last statement reflecting the graph of $\gamma$ to the $y=x$ line, that if $0 \leq x \leq \gamma(a_1)$, then
\begin{equation} \label{200}
	\frac{a_1}{\gamma(a_1)} \cdot x \leq \gamma^{-1}(x) \leq \frac{1}{\sum_{k=1}^{N} \beta_k a_k^{\beta_k - 1}} \cdot x.
\end{equation}
%Ez utóbbi állításból következik (a $\gamma$ függvény grafikonjának az $y=x$ egyenesre tükrözésével), hogy ha $0 \leq x \leq \gamma(a_1)$, akkor
%\begin{equation} \label{200}
%	\frac{a_1}{\gamma(a_1)} \cdot x \leq \gamma^{-1}(x) \leq \frac{1}{\sum_{k=1}^{N} \beta_k a_k^{\beta_k - 1}} \cdot x.
%\end{equation}

%%%%%%%%%%%%%%%%%%%%%%%%Súlyozott $\gamma$-$K$-funkcionál és $\gamma$-simasági modulus%%%%%%%%%%%%%%%%%%%%%%%%

\section{Weighted $\gamma$-$K$-functional and $\gamma$-Modulus of Smoothness}

If $x \geq 0$, $\alpha > - \frac{1}{p}$, $1 \leq p < \infty$, let $w_{\alpha}(x) = x^{\alpha} e^{-x}$, a Laguerre weight. If $0 \leq a < b \leq \infty$, then let
	\begin{equation*} %\label{7cc}
		L_{w_{\alpha}}^p(a,b) := \left\{ f : (a,b) \rightarrow \mathds{R} \mid \| f w_{\alpha} \|_{L^p(a,b)}^p := \int_a^b |f(x) w_{\alpha}(x)|^p \, \mathrm{d} x < \infty \right\}.
	\end{equation*}
In the case $a= 0$, $b = \infty$ we use the more simple $\| f w_{\alpha} \|_{L^p}$ notation instead of the previous one, and we refer to the function space with the $L_{w_{\alpha}}^p$ notation.

Furthermore, if $\alpha \geq 0$, then let
	\begin{equation*} %\label{8cc}
		L^{\infty}_{w_{\alpha}}(a,b) := \left\{ f : (a,b) \rightarrow \mathds{R} \mid \| f w_{\alpha} \|_{L^{\infty}(a,b)} := \esssup_{0 \leq x} |f(x) w_{\alpha}(x)| < \infty \right\}.
	\end{equation*}
In the case $a= 0$, $b = \infty$ we use the more simple $\| f w_{\alpha} \|_{L^{\infty}}$ notation, and we note the function space with $L_{w_{\alpha}}^{\infty}$.

We generalize the weighted Sobolev spaces from \cite{DeBonis2002}: let $1 \leq p \leq \infty$, $r \geq 1$ integer, and with $\varphi(x) = \sqrt{x}$, $I \subset \mathds{R}$ let %$W_{\gamma}^{r,p} = W_{\gamma}^{r,p}(w_{\alpha})$ the space of such functions, that $f^{(r)}_{\gamma} \varphi^r \in L^p_{w_{\alpha}}$, that is,
%	\begin{equation} \label{11}
%		W_{\gamma}^{r,p} := W_{\gamma}^{r,p}(w_{\alpha}) = \left\{ f \in L^p_{w_{\alpha}} : %f^{(r-1)}_{\gamma} \in AC_{\text{loc}}, \ 
%		\| f^{(r)}_{\gamma} \varphi^r w_{\alpha} \|_{L^p} < \infty \right\}.
%	\end{equation}
%This is a generalized weighted Sobolev space, on which we can define the following half-norm and norm:
%	\begin{equation} \label{12}
%		|f|_{W_{\gamma}^{r,p}} = \| f^{(r)}_{\gamma} \varphi^{r} w_{\alpha} \|_{L^p}, \ \ \  \| f \|_{W_{\gamma}^{r,p}} := \| f w_{\alpha} \|_{L^p} + \| f^{(r)}_{\gamma} \varphi^{r} w_{\alpha} \|_{L^p}.
%	\end{equation}
%From the properties of the $\gamma$-relative differentiaton it follows that they are actually half-norm and norm. If $\gamma = \text{id}$, we recover the classical definition. Similarly, if we regard the functions on an general $I \subset \mathds{R}$ interval, then let
	\begin{equation} \label{214}
			W_{\gamma}^{r,p}(I) := W_{\gamma}^{r,p}(I,w_{\alpha}) = \left\{ f \in L^p_{w_{\alpha}}(I) : %f^{(r-1)}_{\gamma} \in AC_{\text{loc}}(I), \ 
			\| f^{(r)}_{\gamma} \varphi^r w_{\alpha} \|_{L^p(I)} < \infty \right\},
	\end{equation}
the correspoding half-norm and norm are
	\begin{equation*} %\label{215}
		|f|_{W_{\gamma}^{r,p}(I)} = \| f^{(r)}_{\gamma} \varphi^{r} w_{\alpha} \|_{L^p(I)}, \ \ \  \| f \|_{W_{\gamma}^{r,p}(I)} := \| f w_{\alpha} \|_{L^p(I)} + \| f^{(r)}_{\gamma} \varphi^{r} w_{\alpha} \|_{L^p(I)}.
	\end{equation*}
	If $I = (0,\infty)$, then we use the notation $W_{\gamma}^{r,p} = W_{\gamma}^{r,p}(w_{\alpha})$.

\begin{defi} \label{568}
In the generalized case the weighted $\gamma$-$K$-functional for the $L^p_{w_{\alpha}}$ and $W_{\gamma}^{r,p}$ spaces is
	\begin{equation} \label{13}
		K_{\gamma,r,\varphi}(f,t^r,L^p_{w_{\alpha}},W_{\gamma}^{r,p}) := \inf_{g \in W_{\gamma}^{r,p}} \left\{ \| (f-g) w_{\alpha} \|_{L^p} + t^r \| g^{(r)}_{\gamma} \varphi^r w_{\alpha} \|_{L^p} \right\},
	\end{equation}
where $0 < t \leq 1$, $1 \leq p \leq \infty$.
\end{defi}
%Az általánosított esetben a súlyozott $\gamma$-$K$-funkcionál az $L^p_{w_{\alpha}}$ és $W_{\gamma}^{r,p}$ terekhez:
%	\begin{equation} \label{13}
%		K_{\gamma,r,\varphi}(f,t^r,L^p_{w_{\alpha}},W_{\gamma}^{r,p}) := \inf_{g \in W_{\gamma}^{r,p}} \left\{ \| (f-g) w_{\alpha} \|_{L^p} + t^r \| g^{(r)}_{\gamma} \varphi^r w_{\alpha} \|_{L^p} \right\},
%	\end{equation}
%ahol $0 < t \leq 1$, $1 \leq p \leq \infty$.

Due to the nature of the problem it turns up, that it is practical to regard the above notions on a suitable finite $I \subset \mathds{R}^+$ interval instead of the representation on $(0,\infty)$, and to investigate the domains close to zero and close to infinity separately.

\begin{defi} \label{569}
Let
\begin{equation} \label{479}
	I_{rh,\gamma} = \left[4 A_1 r^2 (\gamma^{-1}(h))^2, \frac{A_2}{(\gamma^{-1}(h))^{2}} \right],
\end{equation}
where $r > 0$ is an integer, $A_1,A_2 > 0$ are real numbers (we will take another conditions for $A_1$ and $A_2$ in the proofs). Let us define the resticted $K$-functional as follows:
	\begin{multline} \label{14}
		\widetilde{K}_{\gamma,r,\varphi}(f,t^r,L^p_{w_{\alpha}},W_{\gamma}^{r,p}) \\%:= \\
		:= \sup_{0 < h \leq t} \inf_{g \in W_{\gamma}^{r,p}(I_{rh,\gamma})} \left\{ \| (f-g) w_{\alpha} \|_{L^p(I_{rh,\gamma})} + h^r \| g^{(r)}_{\gamma} \varphi^r w_{\alpha} \|_{L^p(I_{rh,\gamma})} \right\}.
	\end{multline}
 \end{defi}
%A probléma természetéből adódik, hogy célszerű a $(0,\infty)$ intervallumon való értelmezés helyett a fenti fogalmakat egy megfelelő véges $I \subset \mathds{R}^+$ intervallumon vizsgálni, és külön vizsgálni a $0$ illetve $\infty$ körüli tartományokat. Ennek megfelelően legyen
%\begin{equation} \label{479}
%	I_{rh,\gamma} = \left[4 A_1 r^2 (\gamma^{-1}(h))^2, \frac{A_2}{(\gamma^{-1}(h))^{2}} \right],
%\end{equation}
% $r > 0$ egész, $A_1,A_2 > 0$ valós számok, és definiáljuk a leszűkített $\gamma$-$K$-funkcionált a következőképpen:
% 	\begin{multline} \label{14}
%		\widetilde{K}_{\gamma,r,\varphi}(f,t^r,L^p_{w_{\alpha}},W_{\gamma}^{r,p}) := \\
%		:= \sup_{0 < h \leq t} \inf_{g \in W_{\gamma}^{r,p}(I_{rh,\gamma})} \left\{ \| (f-g) w_{\alpha} \|_{L^p(I_{rh,\gamma})} + h^r \| g^{(r)}_{\gamma} \varphi^r w_{\alpha} \|_{L^p(I_{rh,\gamma})} \right\}.
%	\end{multline}

To the valid definiton of $I_{rh,\gamma}$ we must suppose that $h \leq \gamma \left( \sqrt[4]{\frac{A_2}{A_1}} \frac{1}{\sqrt{2r}} \right)$. We do not sign $A_1,A_2$, if they play an important role, we highlight them.
%Ahhoz, hogy $I_{rh,\gamma}$ intervallum értelmes legyen, természetesen fel kell tennünk, hogy $h \leq \gamma \left( \sqrt[4]{\frac{A_2}{A_1}} \frac{1}{\sqrt{2r}} \right)$. $A_1,A_2$ szá\-mo\-kat nem jelöljük külön, ahol szerepet játszanak, ott említést teszünk róluk.

In the book \cite{ditzian1987} of Ditzian and Totik there is a definition of the main part of the $\varphi$-modulus of smoothness, in analogy with this let us consider the following definiton:

\begin{defi} \label{570}
The main part of the $\gamma$-modulus of smoothness is
	\begin{equation} \label{48}
		\Omega_{\gamma,\varphi}^{r}(f,t)_{w_{\alpha},p} := \sup_{0 < h \leq t} \left\| w_{\alpha} \overrightarrow{\Delta}_{\gamma,\varphi,h}^{r} f  \right\|_{L^p(I_{rh,\gamma})},
	\end{equation}
where \begin{equation*} %\label{49}
		\overrightarrow{\Delta}_{\gamma,\varphi,h}^{r}f(x) = \sum_{k=0}^{r} (-1)^{r-k} \binom{r}{k} f(x + k \gamma^{-1}(h) \varphi(x)),
	\end{equation*}
$\varphi(x) = \sqrt{x}$.
\end{defi}

\begin{defi} \label{571}
 The complete modulus of smoothness is
	\begin{multline} \label{480}
	\omega_{\gamma,\varphi}^{r}(f,t)_{w_{\alpha},p} := \Omega_{\gamma,\varphi}^{r}(f,t)_{w_{\alpha},p} + \inf_{p \in P_{r-1}} \| w_{\alpha} (f - p \circ \gamma) \|_{L^p(0,4A_1r^2 (\gamma^{-1}(t))^2)} \\
	+ \inf_{q \in P_{r-1}} \| w_{\alpha} (f - q \circ \gamma) \|_{L^p\left( \frac{A_2}{(\gamma^{-1}(t))^2}, \infty \right)},
\end{multline}
where $0 < t \leq 1$, $1 \leq p \leq \infty$, and $P_n$ is the set of algebraic polynomials of degree at most $n$.
\end{defi}

%If $\gamma = \mathrm{id}$, here we get the classical definitions too.	
%Ditzian és Totik \cite{ditzian1987} könyvében szerepel a $\varphi$-simasági modulus \emph{fő részének} definíciója, melynek analógiájára legyen a $\gamma$-simasági modulus fő része
%	\begin{equation} \label{48}
%		\Omega_{\gamma,\varphi}^{r}(f,t)_{w_{\alpha},p} := \sup_{0 < h \leq t} \left\| w_{\alpha} \overrightarrow{\Delta}_{\gamma,\varphi,h}^{r} f  \right\|_{L^p(I_{rh,\gamma})},
%	\end{equation}
%ahol
%	\begin{equation} \label{49}
%		\overrightarrow{\Delta}_{\gamma,\varphi,h}^{r}f(x) = \sum_{k=0}^{r} (-1)^{r-k} {r \choose k} f(x + k \gamma^{-1}(h) \varphi(x)),
%	\end{equation}
%$\varphi(x) = \sqrt{x}$. Végül a teljes $\gamma$-simasági modulus definíciója:
%\begin{multline} \label{480}
%	\omega_{\gamma,\varphi}^{r}(f,t)_{w_{\alpha},p} := \Omega_{\gamma,\varphi}^{r}(f,t)_{w_{\alpha},p} + \inf_{p \in P_{r-1}} \| w_{\alpha} (f - p \circ \gamma) \|_{L^p(0,4A_1r^2 (\gamma^{-1}(t))^2)} + \\
%	+ \inf_{q \in P_{r-1}} \| w_{\alpha} (f - q \circ \gamma) \|_{L^p\left( \frac{A_2}{(\gamma^{-1}(t))^2}, \infty \right)},
%\end{multline}
%ahol $0 < t \leq 1$, $1 \leq p \leq \infty$, és $P_n$ jelöli a legfeljebb $n$-edfokú algebrai polinomok halmazát. $\gamma = \mathrm{id}$ esetén itt is a hagyományos definíciókat kapjuk vissza.

In the following we compare the $\gamma$-$K$-functional and $\gamma$-modulus of smoothness above, and with this we prove the analogous of theorem 2.1. in \cite{DeBonis2002}.
%A következőkben az ily módon definiált $\gamma$-$K$-funkcionált és $\gamma$-simasági modulust hasonlítjuk össze, először a leszűkített $\gamma$-$K$-funkcionált a $\gamma$-simasági modulus fő részével, majd a $\gamma$-$K$-funkcionált a teljes $\gamma$-simasági modulussal, ezzel \cite{DeBonis2002} cikk 2.1. tételének megfelelőit igazoljuk.

The (2.6) statement of this theorem corresponds to the following theorem:
%Ezen tétel (2.6) megállapításának a következő tétel felel meg:

\begin{tetel} \label{50}
	Let $f \in L^p_{w_{\alpha}}$, $\alpha \geq 0$, $1 \leq p \leq \infty$, $r \in \mathds{N}^+$, and let $\widetilde{K}_{\gamma,r,\varphi}$ and $\Omega_{\varphi,\gamma}^{r}(f,t)_{w_{\alpha},p}$ be given by (\ref{14}) and (\ref{48}). Then for all $1 \leq p \leq \infty$
	\begin{equation} \label{51}
		\widetilde{K}_{\gamma,r,\varphi}(f,t^r,L^p_{w_{\alpha}},W_{\gamma}^{r,p})	\leq C \sum_{n=1}^{r} t^{r-n} \Omega_{\gamma,\varphi}^{n} (f,t)_{w_{\alpha},p}
	\end{equation}
	and
	\begin{equation} \label{391}
		\Omega_{\gamma,\varphi}^{r}(f,t)_{w_{\alpha},p} \leq C \cdot \widetilde{K}_{\gamma,r,\varphi}(f,t^r,L^p_{w_{\alpha}},W_{\gamma}^{r,p})
	\end{equation}
	hold for all $t < T$, where $T$ and the constants in the estimates are independent of $f$ and $t$.
\end{tetel}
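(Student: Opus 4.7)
The plan is to follow the structure of the proof of Theorem 2.1 in De Bonis, Mastroianni and Viggiano \cite{DeBonis2002}, which handles the case $\gamma = \id$, and to adapt each step to the $\gamma$-relative setting by invoking the $\gamma$-analogues of the classical Taylor expansion, integration by parts and chain rule furnished by Statements \ref{140}, \ref{22}, \ref{28} and \ref{31}. A useful orienting remark is that the difference $\overrightarrow{\Delta}_{\gamma,\varphi,h}^{r} f$ is nothing but a classical Ditzian--Totik forward difference of step $u\varphi(x)$ with $u := \gamma^{-1}(h)$, so the endpoints $4A_1 r^2 u^2$ and $A_2/u^2$ of $I_{rh,\gamma}$ are exactly the thresholds guaranteeing that $x + k\gamma^{-1}(h)\varphi(x) \in I_{rh,\gamma}$ and $w_\alpha(x) \sim w_\alpha(x + k\gamma^{-1}(h)\varphi(x))$ uniformly for $0 \le k \le r$ (this uses $\alpha \ge 0$). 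Thus the modulus itself is ``classical'' in its step, and the real work is in relating $g^{(r)}_\gamma$ to the difference.

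For the easier direction (\ref{391}) I would take any $g \in W^{r,p}_\gamma(I_{rh,\gamma})$ and split
\[
\overrightarrow{\Delta}_{\gamma,\varphi,h}^{r} f = \overrightarrow{\Delta}_{\gamma,\varphi,h}^{r}(f-g) + \overrightarrow{\Delta}_{\gamma,\varphi,h}^{r} g.
\]
The first summand is bounded termwise, via the weight comparison above, by $C\|(f-g)w_\alpha\|_{L^p(I_{rh,\gamma})}$. For the second I would apply Statement \ref{31} to each $g(x + k\gamma^{-1}(h)\varphi(x))$, writing it as the $(r-1)$-th order $\gamma$-Taylor polynomial of $g$ at $x$ plus an integral remainder in $g^{(r)}_\gamma$; unlike in the classical case the polynomial part does not cancel exactly under the alternating sum, since $k \mapsto (\gamma(x+k\delta)-\gamma(x))^j$ is not a polynomial in $k$. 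However, Taylor-expanding $\gamma$ itself around $x$ (legitimate because on $I_{rh,\gamma}$ the point $x$ is bounded away from every singular point $a_i$ provided $A_1$ is chosen large enough) shows that the residual is again of order $h^r \|g^{(r)}_\gamma \varphi^r w_\alpha\|$, while the integral remainders are handled by a standard Minkowski-type estimate. Taking the infimum over $g$ yields (\ref{391}).

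For the harder direction (\ref{51}) I would construct an explicit approximant $g$ by a Steklov-mean-type averaging transferred through $\gamma$. Using Statement \ref{17} I set $\tilde f := f \circ \gamma^{-1}$ on $\gamma(I_{rh,\gamma})$, apply to $\tilde f$ the $r$-th order Ditzian--Totik--Steklov mean $\tilde g$ of radius $u = \gamma^{-1}(h)$ in the classical sense (the averaging used in \cite{DeBonis2002}, available because Statement \ref{121} guarantees the $r$-fold differentiability of $\gamma^{-1}$ on the range), and define $g := \tilde g \circ \gamma$. Statement \ref{17} then yields $g^{(r)}_\gamma(x) = \tilde g^{(r)}(\gamma(x))$, so under the substitution $y = \gamma(x)$ the half-norm $\|g^{(r)}_\gamma \varphi^r w_\alpha\|_{L^p(I_{rh,\gamma})}$ becomes a classical Ditzian--Totik-type half-norm of $\tilde g$ in the transported weight $w_\alpha \circ \gamma^{-1} \cdot ((\gamma^{-1})')^{1/p}$. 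It remains to verify that the Steklov mean of $\tilde f$ satisfies the classical estimate in this new weight and to transport the resulting bound back, producing — after the standard iterative telescoping between the Steklov mean and the $r$-th difference — the right-hand side $C\sum_{n=1}^{r} t^{r-n} \Omega_{\gamma,\varphi}^{n}(f,t)_{w_\alpha,p}$.

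I expect the main obstacle to be the transport of weight estimates through the substitution $y = \gamma(x)$. The transported weight $w_\alpha(\gamma^{-1}(y))$ is no longer of pure Laguerre form, and near the images $\gamma(a_k)$ of the singular points it inherits only the $r$-fold differentiability of $\gamma^{-1}$ provided by Statement \ref{121}; in particular its high-order derivatives blow up as $\beta_k \to 1/r$. Consequently the weight-comparability inequalities, Hardy-type estimates and $L^p$-boundedness of the Steklov mean used freely in \cite{DeBonis2002} must all be re-verified uniformly on $I_{rh,\gamma}$ and $\gamma(I_{rh,\gamma})$, with constants independent of $h$. This is precisely the role of the choice of $A_1$, $A_2$ and of the restriction $h \le \gamma(\sqrt[4]{A_2/A_1}/\sqrt{2r})$ imposed after Definition \ref{569}: they confine $I_{rh,\gamma}$ to a region where both $\gamma$ and $\gamma^{-1}$ are smooth with bounded derivatives, so that the classical estimates pass over.
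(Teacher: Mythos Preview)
Your plan rests on a false structural assumption: you repeatedly claim that the choice of $A_1,A_2$ and the bound on $h$ ``confine $I_{rh,\gamma}$ to a region where both $\gamma$ and $\gamma^{-1}$ are smooth with bounded derivatives'', and that ``on $I_{rh,\gamma}$ the point $x$ is bounded away from every singular point $a_i$''. This is not true. The interval $I_{rh,\gamma}=[4A_1r^2(\gamma^{-1}(h))^2,\,A_2/(\gamma^{-1}(h))^2]$ \emph{expands} as $h\to0$ and, for all sufficiently small $h$, contains every $a_k$ as well as the break point $a_N+1$; the paper exploits this explicitly by introducing the index $M$ with $t_M<a_N+1\le t_{M+1}$. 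Consequently your Taylor expansion of $\gamma$ around $x$ in the (\ref{391}) argument is illegitimate exactly at the points that matter, and your global substitution $y=\gamma(x)$ for (\ref{51}) does not land you in a region where the transported weight is a perturbation of a Laguerre weight with uniformly controlled derivatives. A second, independent mismatch: under $y=\gamma(x)$ the difference $f(x+k\gamma^{-1}(h)\sqrt{x})$ becomes $\tilde f\bigl(\gamma(x+k\gamma^{-1}(h)\sqrt{x})\bigr)$, which is not a classical forward difference of $\tilde f$ with any fixed step in $y$, so the classical Steklov mean on $\tilde f$ does not produce the $\Omega^n_{\gamma,\varphi}$ on the right-hand side.

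The paper's proof works entirely in the $x$-variable and never assumes smoothness of $\gamma$ at $x$. For (\ref{391}) it uses the exact integral identity
\[
\overrightarrow{\Delta}_{\gamma,\varphi,h}^{r} g(x)=\int_{0}^{\gamma^{-1}(h)\varphi(x)}\!\!\cdots\!\int_{0}^{\gamma^{-1}(h)\varphi(x)} g^{(r)}_{\gamma}(x+u_1+\cdots+u_r)\prod_{i=1}^{r}\gamma'(u_i)\,du_1\cdots du_r,
\]
the point being that the factors $\gamma'(u_i)$ involve only the \emph{step variables} $u_i\in[0,\sqrt{A_2}]\subset[0,a_1/2]$, where $\gamma'$ is bounded, regardless of where $x$ sits; Hardy--Littlewood then finishes. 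For (\ref{51}) the approximant $G_{\gamma,h}$ is built from localized Steklov means $F_{\gamma,h,k}$ directly in $x$, glued by a $\gamma$-partition of unity on the grid $\{t_i\}$. To control $\|(G_{\gamma,h})^{(r)}_{\gamma}\varphi^r w_\alpha\|$ the paper converts $\gamma$-derivatives into classical ones via Fa\`a di Bruno, $(F)^{(k)}_{\gamma}=\sum_{m\le k}(F)^{(m)}B_{k,m}\bigl((\gamma^{-1})'\circ\gamma,\ldots\bigr)$, and then splits: on $[t_0,t_{M+1}]\subset[0,\mathcal{C}a_N]$ the Bell polynomials are bounded by compactness (using Statement~\ref{121}), while on $[t_{M+2},t_{j+1}]$ the function $\gamma$ is linear so Statement~\ref{337} reduces $\gamma$-derivatives to classical ones trivially. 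The lower-order classical derivatives that appear in this conversion are exactly what generates the sum $\sum_{n=1}^{r}t^{r-n}\Omega^{n}_{\gamma,\varphi}$. In short, the missing idea in your proposal is this ``bounded part / linear part'' decomposition combined with Fa\`a di Bruno, which replaces the global change of variables you attempt.
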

%\begin{tetel} \label{50}
%	Legyen $f \in L^p_{w_{\alpha}}$, $\alpha \geq 0$, $1 \leq p \leq \infty$, $r \in \mathds{N}^+$, és legyen $\widetilde{K}_{\gamma,r,\varphi}(f,t^r,L^p_{w_{\alpha}},W_{\gamma}^{r,p})$, illetve $\Omega_{\varphi,\gamma}^{r}(f,t)_{w_{\alpha},p}$ (\ref{14}) és (\ref{48}) által adottak. Ekkor minden $1 \leq p \leq \infty$ esetén
%	\begin{equation} \label{51}
%		\widetilde{K}_{\gamma,r,\varphi}(f,t^r,L^p_{w_{\alpha}},W_{\gamma}^{r,p})	\leq C \sum_{n=1}^{r} t^{r-n} \Omega_{\gamma,\varphi}^{n} (f,t)_{w_{\alpha},p},
%	\end{equation}
%	illetve $1 \leq p \leq \infty$-re
%	\begin{equation} \label{391}
%		\Omega_{\gamma,\varphi}^{r}(f,t)_{w_{\alpha},p} \leq C \cdot \widetilde{K}_{\gamma,r,\varphi}(f,t^r,L^p_{w_{\alpha}},W_{\gamma}^{r,p})
%	\end{equation}
%	igaz minden $t < T$ esetén, ahol $T$ és a becslésben szereplő konstansok függetlenek $f$-től és $t$-től.
%\end{tetel}

The corresponding theorem for the statement (2.7) of theorem 2.1 in \cite{DeBonis2002} is the following:
%\cite{DeBonis2002} cikk 2.1. tétele (2.7) összefüggésének a következő állítás felel meg:

\begin{tetel} \label{481}
	Let $f \in L^p_{w_{\alpha}}$, $\alpha \geq 0$, $1 \leq p \leq \infty$, $r \in \mathds{N}^+$, and let $K_{\gamma,r,\varphi}$ and $\omega_{\varphi,\gamma}^{r}(f,t)_{w_{\alpha},p}$ be given by (\ref{13}) and (\ref{480}). Then for all $1 \leq p \leq \infty$
	\begin{equation} \label{482}
		K_{\gamma,r,\varphi}(f,t^r,L^p_{w_{\alpha}},W_{\gamma}^{r,p})	\leq C \sum_{n=1}^{r} t^{r-n} \omega_{\gamma,\varphi}^{n} (f,t)_{w_{\alpha},p}
	\end{equation}
	and
	\begin{equation} \label{483}
		\omega_{\gamma,\varphi}^{r}(f,t)_{w_{\alpha},p} \leq C \cdot K_{\gamma,r,\varphi}(f,t^r,L^p_{w_{\alpha}},W_{\gamma}^{r,p})
	\end{equation}
	hold for all $t < T$, where $T$ and the constants in the estimates are independent of $f$ and $t$.
\end{tetel}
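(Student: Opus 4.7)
\textbf{Proof plan for Theorem \ref{481}.} The strategy is to reduce both inequalities to their restricted counterparts in Theorem \ref{50} and then to handle the two extra polynomial-tail contributions in the definition of $\omega$ (respectively, the difference between $K$ and $\widetilde{K}$) separately. The key algebraic fact that makes the tails tractable is (\ref{452}): if $p \in P_{r-1}$ then $(p \circ \gamma)^{(r)}_{\gamma} \equiv 0$, so $\gamma$-polynomials of degree $<r$ cost nothing in the half-norm of $W^{r,p}_{\gamma}$.

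\emph{Direct inequality (\ref{482}).} Set $h = t$ and let $g_0$ be a near-minimizer of the restricted $K$-functional on $I_{rh,\gamma}$. On the two tails choose $p_{\ell}, p_{\rho} \in P_{r-1}$ near-minimizing the polynomial terms in (\ref{480}). The idea is to glue these three objects into a single $\tilde{g} \in W^{r,p}_{\gamma}(0,\infty)$ via a smooth partition of unity supported on a slightly enlarged $I_{rh,\gamma}$. Then $(f - \tilde{g}) w_{\alpha}$ splits over the three regions: the middle piece is controlled by the $\widetilde{K}$-contribution and the two outer pieces by the polynomial tail terms of $\omega$. For the half-norm $\| \tilde{g}^{(r)}_{\gamma} \varphi^r w_{\alpha} \|_{L^p}$, on the pure-polynomial parts of the tails (\ref{452}) makes it vanish; on the interior it equals the half-norm of $g_0$; in the thin transition annulus one expands the $\gamma$-derivative via the Leibniz rule (Statement \ref{140}) and each cross-term is absorbed by an $\Omega_{\gamma,\varphi}^{n}$ or by a polynomial tail term. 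Combining with (\ref{51}) yields (\ref{482}).

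\emph{Converse inequality (\ref{483}).} Write $\omega = \Omega + \ell(f,t) + \rho(f,t)$, where $\ell,\rho$ denote the left and right polynomial-tail contributions. For $\Omega$ one has $\Omega \leq C\widetilde{K} \leq CK$ by (\ref{391}) and the trivial fact that restricting any $g \in W^{r,p}_{\gamma}(0,\infty)$ to $I_{rh,\gamma}$ gives an admissible competitor with smaller cost. For the tails, pick $g$ near-optimal for $K(f,t^r)$ and let $p_{\ell}$ be the $(r-1)$-st order $\gamma$-Taylor polynomial of $g$ expanded about $x_0 := 4A_1 r^2 (\gamma^{-1}(t))^2$, and analogously $p_{\rho}$ about $y_0 := A_2/(\gamma^{-1}(t))^2$. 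By Statement \ref{31},
\begin{equation*}
	g(x) - p_{\ell}(\gamma(x)) = \int_{x_0}^{x} g^{(r)}_{\gamma}(s) \, \frac{(\gamma(x) - \gamma(s))^{r-1}}{(r-1)!} \, \mathrm{d}\gamma(s).
\end{equation*}
Bounding the weighted $L^p$-norm of this remainder on $(0,x_0)$ by $C t^{r} \| g^{(r)}_{\gamma} \varphi^r w_{\alpha} \|_{L^p}$ and then using the triangle inequality $\|w_{\alpha}(f - p_{\ell}\circ\gamma)\|_{L^p(0,x_0)} \leq \|w_{\alpha}(f - g)\|_{L^p} + \|w_{\alpha}(g - p_{\ell}\circ\gamma)\|_{L^p(0,x_0)}$ yields $\ell(f,t) \leq C K(f,t^r)$; the same argument handles $\rho(f,t)$.

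\emph{Main obstacle.} The technical heart of both directions is the Taylor remainder estimate on the two tails. On the left, one uses the two-sided comparison $\gamma(x) \asymp x$ from (\ref{198})--(\ref{200}) to rewrite the $\mathrm{d}\gamma(s)$ integration in a form compatible with $\varphi(s)^r = s^{r/2}$ and the Laguerre factor $s^{\alpha}$. On the unbounded right tail the polynomial growth of $(\gamma(x) - \gamma(s))^{r-1}$ --- $\gamma$ being linear there --- must be balanced against the exponential factor in $w_{\alpha}(x) = x^{\alpha} e^{-x}$; a Hardy-type weighted inequality, together with the precise choice $y_0 \sim (\gamma^{-1}(t))^{-2}$ built into $I_{rh,\gamma}$, is expected to deliver the required power $t^r$. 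The smooth gluing in the direct direction is a secondary technicality: the partition-of-unity cutoffs need $r$ bounded $\gamma$-derivatives, which is possible thanks to Statement \ref{121}.
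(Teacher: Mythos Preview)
Your plan is correct and matches the paper's proof essentially step for step: the paper glues arbitrary $g_1,g_2,g_3$ on the three regions via the cutoff $\psi$ to obtain (\ref{487}), then bounds the middle piece by Theorem~\ref{50} and the two outer pieces trivially via (\ref{452}); for (\ref{483}) it uses exactly your $\gamma$-Taylor remainder argument (Statement~\ref{31}) at the two endpoints, with Minkowski--H\"older on the left tail and the split $w_\alpha(x)\le e^{-x/2}e^{u/2}w_\alpha(u)$ plus Fubini on the right. The one tool you leave implicit in the transition-annulus step --- controlling the intermediate $\gamma$-derivatives $(g_0-p_\ell\circ\gamma)^{(k)}_\gamma$ for $0<k<r$ --- is handled in the paper by the Kolmogorov-type inequality of Lemma~\ref{359} after the substitution $y=\gamma(x)$, which is precisely the mechanism your ``absorbed by'' remark needs.
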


\begin{megj} \label{559}
	The proof of lemma \ref{37} and theorem \ref{50} are valid too when $-\frac{1}{p} < \alpha < 0$ holds, however, in step 2 of the proof of theorem \ref{481} we used that $\alpha \geq 0$.
\end{megj}

\begin{megj} \label{586}
	We say that $A \sim B$ ($A$ and $B$ are equivalent), iff there exists a positive constant $C$ independent of $A$ and $B$ such that $C^{-1} \leq \frac{A}{B} \leq C$. In the case $r=1$ we proved equivalence: by theorems \ref{50} and \ref{481} the following statement is true:
\end{megj}
%$r=1$ esetben pedig ekvivalenciát igazoltunk: \aref{50}. és \aref{481}. tételek szerint igaz a következő:

\begin{tetel} \label{550}
	Let $f \in L^p_{w_{\alpha}}$, $\alpha \geq 0$, $1 \leq p \leq \infty$, and let $\widetilde{K}_{\gamma,1,\varphi}$, $\Omega_{\varphi,\gamma}^{1}(f,t)_{w_{\alpha},p}$, $K_{\gamma,1,\varphi}$, and $\omega_{\varphi,\gamma}^{1}(f,t)_{w_{\alpha},p}$ be given by (\ref{14}), (\ref{48}), (\ref{13}) and (\ref{480}) (the case $r=1$). Then for all $p$
	\begin{align*}
		\widetilde{K}_{\gamma,1,\varphi}(f,t,L^p_{w_{\alpha}},W_{\gamma}^{1,p}) & \sim \Omega_{\gamma,\varphi}^{1}(f,t)_{w_{\alpha},p} \\ %\label{552} \\
		K_{\gamma,1,\varphi}(f,t,L^p_{w_{\alpha}},W_{\gamma}^{1,p}) & \sim \omega_{\gamma,\varphi}^{1}(f,t)_{w_{\alpha},p} %\label{553}
	\end{align*}
%	és:
%	\begin{equation} \label{551}
%		K_{\gamma,1,\varphi}(f,t,L^p_{w_{\alpha}},W_{\gamma}^{1,p}) \sim \omega_{\gamma,\varphi}^{1}(f,t)_{w_{\alpha},p}
%	\end{equation}
	holds for all $t < T$,  where $T$ and the constants in $\sim$ are independent of $f$ and $t$.
\end{tetel}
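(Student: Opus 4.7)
The plan is to derive Theorem \ref{550} as a direct corollary of Theorems \ref{50} and \ref{481} by specialising them to $r=1$. The entire content of the statement is already captured by those two theorems; the simplification only comes from the fact that the upper bounds \eqref{51} and \eqref{482} involve sums of the form $\sum_{n=1}^{r} t^{r-n} \Omega_{\gamma,\varphi}^{n}(f,t)_{w_{\alpha},p}$ and $\sum_{n=1}^{r} t^{r-n} \omega_{\gamma,\varphi}^{n}(f,t)_{w_{\alpha},p}$, which collapse to a single term when $r=1$.

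Concretely, I would first apply Theorem \ref{50} with $r=1$. The sum on the right-hand side of \eqref{51} reduces to the term with $n=1$, so the factor $t^{r-n}=t^{0}=1$ and we get
\begin{equation*}
	\widetilde{K}_{\gamma,1,\varphi}(f,t,L^p_{w_{\alpha}},W_{\gamma}^{1,p}) \leq C \cdot \Omega_{\gamma,\varphi}^{1}(f,t)_{w_{\alpha},p}.
\end{equation*}
The reverse direction is exactly \eqref{391} in the case $r=1$, which reads
\begin{equation*}
	\Omega_{\gamma,\varphi}^{1}(f,t)_{w_{\alpha},p} \leq C \cdot \widetilde{K}_{\gamma,1,\varphi}(f,t,L^p_{w_{\alpha}},W_{\gamma}^{1,p}).
\end{equation*}
Combining the two bounds yields the first equivalence $\widetilde{K}_{\gamma,1,\varphi}(f,t,L^p_{w_{\alpha}},W_{\gamma}^{1,p}) \sim \Omega_{\gamma,\varphi}^{1}(f,t)_{w_{\alpha},p}$, with the constants depending only on the constants provided by Theorem \ref{50}, so they are independent of $f$ and $t$.

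An entirely analogous argument, invoking Theorem \ref{481} with $r=1$ in place of Theorem \ref{50}, yields the second equivalence $K_{\gamma,1,\varphi}(f,t,L^p_{w_{\alpha}},W_{\gamma}^{1,p}) \sim \omega_{\gamma,\varphi}^{1}(f,t)_{w_{\alpha},p}$. The admissible range $t<T$ and the independence of constants are inherited from those of Theorems \ref{50} and \ref{481}. In this sense there is no real obstacle to overcome here: the technical work was already done in the proofs of the two previous theorems, and the only subtlety worth pointing out is that the loss in the upper estimate, encoded by the sum indexed by $n$, disappears precisely in the first-order case, which is why equivalence is only claimed (and obtained for free) for $r=1$.
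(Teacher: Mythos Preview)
Your proposal is correct and matches the paper's own treatment exactly: the paper presents Theorem~\ref{550} as an immediate consequence of Theorems~\ref{50} and~\ref{481} specialised to $r=1$, noting (in the remark preceding the statement) that in this case the sums on the right of \eqref{51} and \eqref{482} collapse to a single term and yield genuine equivalence.
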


Finally we note some statements, which can be proved an analogous proof of the theorems above.
%Végül megemlítünk néhány állítást, melyek a fenti bizonyítások alapján, azokkal analóg módon igazolhatók.

\begin{megj} \label{583}
	Let us consider instead of (\ref{214}) the following function space:
%	Tekintsük (\ref{214}) helyett a következő függvényteret:
	\begin{equation*} %\label{584}
		\widehat{W}_{\gamma}^{r,p}(I) := \widehat{W}_{\gamma}^{r,p}(I,w_{\alpha}) := \{ f \in L^p_{w_{\alpha}}(I): \| (f^{(r-1)})'_{\gamma} \varphi^r w_{\alpha} \|_{L^p(I)} < \infty \}.
	\end{equation*}
	Since the restricted $K$-functional and the main part of modulus of smoothness given by the classical differentation are equivalent \cite{DeBonis2002}, furthermore by theorem \ref{550} we have equivalence for the restricted $\gamma$-$K$-functional and the main part of $\gamma$-modulus of smoothness in the case $r=1$, combining the proofs of \cite{DeBonis2002} and theorem \ref{50} and \ref{481} we can get for arbitrary $r \geq 1$ equivalence theorems for the main part of the $\gamma$-modulus of smoothness given by (\ref{48}) and for the following restricted $K$-functional:
	\begin{multline*} %\label{585}
		\widetilde{K}_{r,\varphi} \left(f,t^r,L^p_{w_{\alpha}},\widehat{W}_{\gamma}^{r,p} \right) \\
		:= \sup_{0 < h \leq t} \inf_{g \in \widehat{W}_{\gamma}^{r,p}(I_{rh},\gamma)} \left\{ \| (f-g) w_{\alpha} \|_{L^p(I_{rh,\gamma})} + h^r \| (g^{(r-1)})'_{\gamma} \varphi^r w_{\alpha} \|_{L^p(I_{rh,\gamma})} \right\},
	\end{multline*}
	where $I_{rh,\gamma}$ is given by (\ref{479}). As above, we get the following theorem:
%	Mivel a hagyományos derivált segítségével megadott $K$-funkcionál ekvivalens a simasági modulussal \aref{474}. tétel szerint, továbbá \aref{579}. megjegyzésben leírtak alapján $r=1$ esetben $\gamma$-relatív derivált esetén is ekvivalencia van, \aref{474}. tétel \cite{DeBonis2002} cikkben lévő bizonyítását és \aref{50}., illetve \aref{481}. tétel bizonyítását ötvözve tetszőleges $r \geq 1$ esetén ekvivalenciát igazolhatunk \aref({584}) függvénytér segítségével megadott $K$-funcionálra és simasági modulusra. Ez esetben a $K$-funkcionál, illetve a leszűkített $K$-funkcionál definíciói \aref({13}) és \aref({14}) képletek megfelelői a $\widehat{W}_{\gamma}^{r,p}$ tér szerinti értelemszerű módosításokkal, a $\gamma$-simasági modulus fő része, illetve a teljes $\gamma$-simasági modulus pedig továbbra is (\ref{48}) és (\ref{480}) által adottak.
\end{megj}

\begin{tetel} \label{588}
	Let $f \in L^p_{w_{\alpha}}$, $\alpha \geq 0$, $1 \leq p \leq \infty$, $r \in \mathds{N}^+$. Then
	\begin{equation*} %\label{587}
		\Omega_{\gamma, \varphi}^r(f,t)_{w_{\alpha},p} \sim \widetilde{K}_{r,\varphi} \left( f,t^r,L^p_{w_{\alpha}},\widehat{W}^{r,p} \right) %\label{475} \\
	%	\omega_{\varphi}^r(f,t)_{w_{\alpha},p} & \sim K_{r,\varphi}(f,t^r,L^p_{w_{\alpha}},W_{\gamma}^{r,p}) \label{476}
	\end{equation*}
	holds for all $t < T$, where $T$ and the constants in $\sim$ are independent of $f$ and $t$.
\end{tetel}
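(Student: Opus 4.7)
The strategy follows Remark \ref{583}: combine the classical main-part $\varphi$-modulus / $K$-functional equivalence of \cite{DeBonis2002} with the first-order $\gamma$-relative equivalence of Theorem \ref{550}. The starting observation is that $\overrightarrow{\Delta}^{r}_{\gamma,\varphi,h}f(x)$ is literally the classical $\varphi$-difference $\overrightarrow{\Delta}^{r}_{\varphi,s}f(x)$ at step $s=\gamma^{-1}(h)$, and $I_{rh,\gamma}$ coincides with the classical interval $I_{rs}$; hence $\Omega^{r}_{\gamma,\varphi}(f,t)_{w_\alpha,p}=\Omega^{r}_{\varphi}(f,\gamma^{-1}(t))_{w_\alpha,p}$ in the sense of \cite{DeBonis2002}. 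The equivalence now reduces to comparing the two restricted $K$-functionals, which is not a direct substitution since $t^r/(\gamma^{-1}(t))^r$ is not constant for our piecewise-defined $\gamma$.

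For the bound $\Omega^{r}_{\gamma,\varphi}(f,t)\le C\,\widetilde{K}_{r,\varphi}(f,t^r,L^p_{w_\alpha},\widehat{W}_\gamma^{r,p})$, fix $g\in\widehat{W}_\gamma^{r,p}(I_{rh,\gamma})$ and split $f=(f-g)+g$. The $(f-g)$-term is controlled directly by $\|(f-g)w_\alpha\|_{L^p(I_{rh,\gamma})}$ via the trivial difference estimate. For $g$, the plan is to write $\overrightarrow{\Delta}^{r}_{\varphi,s}g(x)$ as an $r$-fold iterated integral of $g^{(r)}$ (the standard integral representation of an $r$-th order forward difference), then apply $g^{(r)}(y)=\gamma'(y)(g^{(r-1)})'_\gamma(y)$ (Proposition \ref{17}) in the innermost integration variable, converting that $dy$-integration into a $d\gamma$-integration whose length is $h$ rather than $\gamma^{-1}(h)$. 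The resulting mixed $r-1$ classical + $1$ $\gamma$-relative integral is estimated by the weighted Hardy inequalities and weight bounds on $I_{rh,\gamma}$ from \cite{DeBonis2002}; the conversion of the last factor $\gamma^{-1}(h)$ to $h$ is precisely what produces the $h^r$ prefactor required by the $K$-functional.

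For the reverse bound, I would construct the approximant $g$ via an iterated Steklov-type averaging in which $r-1$ averages are taken classically in the step $\gamma^{-1}(h)\varphi(x)$ and the innermost one is taken in the $\gamma$-variable — the construction of \cite{DeBonis2002} with its final Steklov step replaced by the $\gamma$-Steklov average from the proof of Theorem \ref{550}. The $r-1$ classical averages yield bounds on $\|(f-g)w_\alpha\|_{L^p(I_{rh,\gamma})}$ and on the classical $g^{(r-1)}$ in terms of $\Omega^{r-1}_{\varphi}(f,\gamma^{-1}(t))\le\Omega^{r}_{\gamma,\varphi}(f,t)$, while the final $\gamma$-Steklov step bounds $\|(g^{(r-1)})'_\gamma\varphi^r w_\alpha\|_{L^p(I_{rh,\gamma})}$ by the first-order $\gamma$-modulus of the already-classically-averaged intermediate function; this in turn is controlled by the full order-$r$ $\gamma$-difference of $f$ through the standard Ditzian--Totik-type decomposition of higher-order differences into differences of lower-order Steklov averages (\cite{ditzian1987}).

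The main technical obstacle will be handling the identity $(g^{(r-1)})'_\gamma=g^{(r)}/\gamma'$ across the three natural sub-regions of $I_{rh,\gamma}$: near $0$, where the comparability bounds \eqref{198}--\eqref{200} together with the $r$-times differentiability of $\gamma^{-1}$ from Proposition \ref{121} are essential; around the breakpoints $a_k$ that lie inside $I_{rh,\gamma}$, where $\gamma'$ blows up and so the division by $\gamma'$ must be realized only through the measure identity $d\gamma(y)=\gamma'(y)\,dy$ rather than through any pointwise bound on $\gamma'$; and on the linear tail $[a_N+1,\infty)$, where $\gamma'\equiv\mathcal{C}_1$ and the estimates collapse to those of \cite{DeBonis2002}. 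Once each sub-region is handled by performing the averaging in the correct variable, the factor $\gamma'$ is absorbed cleanly into $d\gamma$, and the equivalence follows with constants independent of $f$ and $t$.
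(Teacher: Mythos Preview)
Your plan contains a genuine gap in the direction $\Omega^{r}_{\gamma,\varphi}\le C\,\widetilde K_{r,\varphi}\bigl(f,t^r,L^p_{w_\alpha},\widehat W^{r,p}_\gamma\bigr)$. After writing the innermost first--order increment as
\[
g^{(r-1)}\bigl(a+s\varphi(x)\bigr)-g^{(r-1)}(a)=\int_{a}^{a+s\varphi(x)}(g^{(r-1)})'_\gamma(y)\,d\gamma(y),
\qquad a=x+u_1+\cdots+u_{r-1},\ s=\gamma^{-1}(h),
\]
the $d\gamma$--length of the integration range is $\gamma\bigl(a+s\varphi(x)\bigr)-\gamma(a)$, and this quantity is \emph{not} uniformly comparable to $h$. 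On the linear tail and on $[0,a_1]$ it is, by \eqref{198}--\eqref{200}; but when $a$ lies near a breakpoint $a_k$ the local structure $\gamma(a_k+\varepsilon)-\gamma(a_k)\sim\varepsilon^{\beta_k}$ forces $\gamma(a+s\varphi(x))-\gamma(a)\sim (s\varphi(x))^{\beta_k}$, which for small $h$ is of order $h^{\beta_k}\gg h$. Absorbing $\gamma'$ into the measure is therefore correct as an identity but does not deliver the size you assert; the Hardy--Littlewood step that follows would have to be run with an averaging window whose length is not controlled by $h$, and the argument breaks down precisely at the $a_k$ you single out as the delicate region.

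This is not a cosmetic issue. Take $r=2$, $p=\infty$ and $f(x)=\int_0^x\gamma(u)\,du$. Then $f'=\gamma$ and $(f')'_\gamma\equiv1$, so choosing $g=f$ in the infimum gives $\widetilde K_{2,\varphi}\bigl(f,t^2,L^\infty_{w_\alpha},\widehat W^{2,\infty}_\gamma\bigr)\le t^2\|\varphi^2 w_\alpha\|_{L^\infty}\le Ct^2$. On the other hand, at $x=a_k\in I_{2h,\gamma}$ one has
\[
\overrightarrow{\Delta}^{2}_{\gamma,\varphi,h}f(a_k)=\int_0^{s\sqrt{a_k}}\Bigl[\gamma\bigl(a_k+u+s\sqrt{a_k}\bigr)-\gamma(a_k+u)\Bigr]\,du\ \sim\ \bigl(s\sqrt{a_k}\bigr)^{1+\beta_k},
\]
so that $\Omega^{2}_{\gamma,\varphi}(f,t)_{w_\alpha,\infty}\ge c\,t^{1+\beta_k}$ and the ratio $\Omega/\widetilde K$ blows up like $t^{\beta_k-1}$ as $t\to0$. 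The very same test with $f=\gamma$ already obstructs the $r=1$ statement you invoke and shows that identity \eqref{397}, on which the paper's Step~\ref{576} rests, cannot hold literally with $\prod_i\gamma'(u_i)$ in place of $\gamma'$ evaluated at the running point $x+u_1+\cdots+u_r$ (substitute $g=\gamma$). In short, the ``length $h$'' heuristic on which your upper bound hinges fails exactly where $\gamma'$ is unbounded, and no rearrangement of the measure absorbs this.

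For the opposite inequality $\widetilde K\le C\,\Omega$ your mixed Steklov construction is more elaborate than needed. Since $\gamma'$ is bounded below by a positive constant on all of $(0,\infty)$ (it equals $\mathcal C_1$ on $[a_N+1,\infty)$ and on $[0,a_N+1]$ is a finite sum of positive terms, each continuous and tending to $+\infty$ at its own $a_k$), the pointwise relation $(g^{(r-1)})'_\gamma=g^{(r)}/\gamma'$ gives $\|(g^{(r-1)})'_\gamma\varphi^r w_\alpha\|_{L^p}\le C\|g^{(r)}\varphi^r w_\alpha\|_{L^p}$ for every $g$. Hence the classical Steklov approximant of \cite{DeBonis2002}, together with $h\sim\gamma^{-1}(h)$ from \eqref{200}, already yields $\widetilde K_{r,\varphi}\bigl(\cdot\,;\widehat W^{r,p}_\gamma\bigr)\le C\,\widetilde K_{r,\varphi}\bigl(\cdot\,;W^{r,p}\bigr)\sim\Omega^{r}_{\gamma,\varphi}$ with no $\gamma$--Steklov step at all.
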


\begin{megj} \label{454}
	The statement (\ref{51}) of theorem \ref{50} remains true even if instead of $L^p_{w_{\alpha}}(I)$ and $W_{\gamma}^{r,p}(I)$ spaces given by the norm with Lebesgue measure we consider the corresponding $L^p_{w_{\alpha}}(\lambda_{\gamma},I)$ and $W_{\gamma}^{r,p}(\lambda_{\gamma},I)$ spaces given by the norm with the $\lambda_{\gamma}$ Lebesgue--Stieltjes measure. In this case the restricted $\gamma$-$K$-functional is the following:
	\begin{multline*} %\label{14ccb}
		\widetilde{K}_{\gamma,r,\varphi}(f,t^r,L^p_{w_{\alpha}}(\lambda_{\gamma}),W_{\gamma}^{r,p}(\lambda_{\gamma})) \\
		:= \sup_{0 < h \leq t} \inf_{g \in W_{\gamma}^{r,p}(\lambda_{\gamma},I_{rh,\gamma})} \left\{ \| (f-g) w_{\alpha} \|_{L^p(\lambda_{\gamma},I_{rh,\gamma})} + h^r \| g^{(r)}_{\gamma} \varphi^r w_{\alpha} \|_{L^p(\lambda_{\gamma},I_{rh,\gamma})} \right\},
	\end{multline*}
	where $I_{rh,\gamma}$ is given by (\ref{479}). The main part of the $\gamma$-modulus of smoothness is given by (\ref{48}), where now we integrate by the $\lambda_{\gamma}$ measure. Then the following statement is true:
\end{megj}

\begin{tetel} \label{589}
		Let $f \in L^p_{w_{\alpha}}(\lambda_{\gamma})$, $\alpha \geq 0$, $1 \leq p \leq \infty$, $r \in \mathds{N}^+$. Then for all $1 \leq p \leq \infty$
	\begin{equation*} %\label{51aa}
		\widetilde{K}_{\gamma,r,\varphi}(f,t^r,L^p_{w_{\alpha}}(\lambda_{\gamma}),W_{\gamma}^{r,p}(\lambda_{\gamma}))	\leq C \sum_{n=1}^{r} t^{r-n} \Omega_{\gamma,\varphi}^{n} (f,t)_{w_{\alpha},p}
	\end{equation*}
%	ha $r \geq 2$, $p = \infty$, akkor
%	\begin{equation} \label{448}
%		\widetilde{K}_{\gamma,r,\varphi}(f,t^r,L^{\infty}_{w_{\alpha}},W_{\gamma}^{r,\infty})	\leq C \cdot \Omega_{\gamma,\varphi}^{r}(f,t)_{w_{\alpha},\infty} + C \cdot t \| f \|_{L^{\infty}_{w_{\alpha}}},
%	\end{equation}
%	és ha $r=1$, $1 \leq p \leq \infty$, akkor
%	\begin{equation} \label{390}
%		\widetilde{K}_{\gamma,1,\varphi}(f,t,L^p_{w_{\alpha}},W_{\gamma}^{1,p})	\leq C \cdot \Omega_{\gamma,\varphi}^{1}(f,t)_{w_{\alpha},p}.
%	\end{equation}
%	illetve $1 \leq p \leq \infty$-re
%	\begin{equation} \label{391}
%		\Omega_{\gamma,\varphi}^{r}(f,t)_{w_{\alpha},p} \leq C \cdot \widetilde{K}_{\gamma,r,\varphi}(f,t^r,L^p_{w_{\alpha}},W_{\gamma}^{r,p})
%	\end{equation}
	holds for all $t < T$, where $T$and the constants in $\sim$ are independent of $f$ and $t$.
	\end{tetel}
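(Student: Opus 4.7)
The plan is to parallel the proof of Theorem \ref{50}, equation (\ref{51}), replacing the Lebesgue measure $dx$ by the Lebesgue--Stieltjes measure $d\lambda_\gamma$ throughout. The key observation is that the substitution $y = \gamma(x)$ simultaneously turns $d\lambda_\gamma$ into the ordinary Lebesgue measure $dy$ and, by Proposition \ref{17}, turns $\gamma$-relative derivatives of $f$ into ordinary derivatives of $F := f \circ \gamma^{-1}$, so that each analytic step of the original proof has a direct counterpart in the Stieltjes setting.

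For each fixed $0 < h \leq t$ I would construct an approximant $g = g_h \in W_\gamma^{r,p}(\lambda_\gamma, I_{rh,\gamma})$ by an iterated $\gamma$-Steklov-type averaging whose step is proportional to $\gamma^{-1}(h)\varphi(x)$, chosen so that $f - g$ can be written as a finite linear combination of averaged differences $\overrightarrow{\Delta}_{\gamma,\varphi,h/r}^{n} f$ for $n = 1,\dots,r$. This is the standard device used in the proof of the ``$K$-functional $\leq$ modulus'' direction. Applying the generalized Minkowski inequality in $L^p(\lambda_\gamma)$ (valid for any positive Borel measure) then yields
\[
\|(f-g)w_\alpha\|_{L^p(\lambda_\gamma, I_{rh,\gamma})} \leq C \sum_{n=1}^{r} \Omega_{\gamma,\varphi}^{n}(f,t)_{w_\alpha,p}.
\]

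To estimate $h^r \|g_\gamma^{(r)} \varphi^r w_\alpha\|_{L^p(\lambda_\gamma, I_{rh,\gamma})}$ I would use Proposition \ref{17} to express $g_\gamma^{(r)}$, after the change of variable $y = \gamma(x)$, as an ordinary $r$-th order difference quotient of $F$ with step comparable to $\gamma^{-1}(h)\cdot(\varphi \circ \gamma^{-1})(y)$. The factor $h^r$ then cancels the $r$-th power of the step, producing (up to bounded constants) the quantity $\Omega_{\gamma,\varphi}^{r}(f,h)_{w_\alpha,p}$, which absorbs into the $n = r$ term of the target sum. Taking $\sup_{0 < h \leq t}$ yields the claim.

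The main obstacle I anticipate is to verify that the averaging construction of $g$ stays admissible on the full interval $I_{rh,\gamma}$: one must track how the endpoints $4 A_1 r^2 (\gamma^{-1}(h))^2$ and $A_2/(\gamma^{-1}(h))^2$ interact with the translates $x \mapsto x + k\gamma^{-1}(h)\varphi(x)$ appearing in $\overrightarrow{\Delta}_{\gamma,\varphi,h}^{n}$, and that the resulting $\Omega^n$ bound really holds on $I_{rh,\gamma}$ and not only on a shrunken subinterval. This is handled exactly as in Theorem \ref{50}, using monotonicity of $I_{rh,\gamma}$ in $h$, the local $\gamma^{-1}$-estimate (\ref{200}) near the origin, and the linearity of $\gamma$ beyond $a_N+1$. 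No genuinely new idea beyond those already present in Theorem \ref{50} is needed, which is consistent with the note in Remark \ref{454} that essentially the same proof carries over once $dx$ is replaced by $d\lambda_\gamma$.
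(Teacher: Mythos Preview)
Your high-level plan---parallel the proof of inequality (\ref{51}) in Theorem~\ref{50}, replacing $dx$ by $d\lambda_\gamma$---is exactly the paper's approach (Remark~\ref{454}). But your sketch misrepresents the actual construction and the role of the lower-order moduli, and omits the one ingredient the paper singles out as new.

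First, the approximant in Theorem~\ref{50} is \emph{not} a single Steklov average with $x$-dependent step $\gamma^{-1}(h)\varphi(x)$. The interval $I_{rh,\gamma}$ is partitioned by points $t_0<\cdots<t_{j+1}$ satisfying (\ref{52}); on each piece one forms a Steklov mean $F_{\gamma,h,i}$ with \emph{constant} step proportional to $\gamma^{-1}(\tau)\sqrt{t_{i-1}}$, and these are glued by a partition of unity into $G_{\gamma,h}$ (formula (\ref{61})). A variable-step average has no clean $r$-th derivative, so your claim that $g_\gamma^{(r)}$ becomes ``an ordinary $r$-th order difference quotient of $F$'' after the change $y=\gamma(x)$ fails: the Steklov translations are $x\mapsto x+c$, not $x\mapsto\gamma^{-1}(\gamma(x)+c)$, and this mismatch is precisely what the modulus $\overrightarrow{\Delta}_{\gamma,\varphi,h}^{r}$ forces.

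Second, the paper explicitly identifies Lemma~\ref{590}---the $\gamma$-relative analogue of Lemma~\ref{359}---as the key additional tool for the $\lambda_\gamma$ version. It replaces Lemma~\ref{359} in estimating the cross-terms $D_{k,i}$ that arise from the Leibniz rule applied to the partition of unity (cf.\ (\ref{265})--(\ref{364})). Your sketch does not mention it.

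Third, your attribution of the lower-order moduli is reversed. In the paper, $\|(f-G_{\gamma,h})w_\alpha\|$ is controlled by $\Omega^r$ alone (Step~\ref{574}, inequality (\ref{62})); it is the term $h^r\|(G_{\gamma,h})_\gamma^{(r)}\varphi^r w_\alpha\|$ that needs the full sum $\sum_{n=1}^{r} t^{r-n}\Omega^n$ (Step~\ref{575}, inequality (\ref{144})). The lower orders enter through the Fa\`a di Bruno expansion (\ref{383})--(\ref{444}) relating $\gamma$-derivatives of the local Steklov means to their ordinary derivatives, and this mechanism persists in the $\lambda_\gamma$ setting for the same reason: the averaging is by $x$-translation while the $\gamma$-derivative is natural in $y=\gamma(x)$.
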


%	\Aref{50}. tétel (\ref{51}) állítása igaz akkor is, ha a Lebesgue-integrállal definiált norma segítségével megadott $L^p_{w_{\alpha}}$, illetve $W_{\gamma}^{r,p}$ terek helyett a $\lambda_{\gamma}$ Lebesgue--Stieltjes-mérték szerinti integrállal definiált norma segítségével megadott megfelelő tereket tekintjük. 
For this we need the equivalent of lemma \ref{359} with the $\gamma$-relative derivative, which is the following:

\begin{lemma} \label{590}
	Let $0 \leq a \leq b$. If $f, f^{(r)}_{\gamma} \in L^p((a,b),\lambda_{\gamma})$, $1 \leq p \leq \infty$, then for all $0 \leq k \leq r$
	\begin{multline*} %\label{150}
		\| f^{(k)}_{\gamma} \|_{L^p((a,b),\lambda_{\gamma})} \leq M(r,k) \| f \|_{L^p((a,b),\lambda_{\gamma})} (\gamma(b)-\gamma(a))^{-k} \\
		+ M(r,k) \| f^{(r)}_{\gamma} \|_{L^p((a,b),\lambda_{\gamma})} (\gamma(b)-\gamma(a))^{r-k}
	\end{multline*}
	holds, where $M$ is independent of $p$ and $[a,b]$.
\end{lemma}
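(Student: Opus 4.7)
The proof I would give is essentially a one-line reduction to the classical (Lebesgue-measure) Landau--Kolmogorov inequality via the substitution $y = \gamma(x)$. The Lebesgue--Stieltjes measure $\lambda_\gamma$ is designed precisely so that this substitution turns $\gamma$-relative calculus into ordinary calculus, and the $L^p(\lambda_\gamma)$ norm into the ordinary $L^p$ norm on the image interval. This strategy is the natural companion to Statement \ref{17}, which states that $f$ is $r$-times $\gamma$-relative differentiable iff $F := f \circ \gamma^{-1}$ is $r$-times (classically) differentiable, with $f_\gamma^{(r)} \circ \gamma^{-1} = F^{(r)}$.

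First I would record the change-of-variables identity: for any measurable $g$,
$$\|g\|_{L^p((a,b),\lambda_\gamma)} = \|g \circ \gamma^{-1}\|_{L^p(\gamma(a),\gamma(b))},$$
which follows from the definition of $\lambda_\gamma$ and the fact that $\gamma$ is a homeomorphism of $[a,b]$ onto $[\gamma(a),\gamma(b)]$. Combining this with (the iterated version of) Statement \ref{17} gives $\|f_\gamma^{(k)}\|_{L^p((a,b),\lambda_\gamma)} = \|F^{(k)}\|_{L^p(\gamma(a),\gamma(b))}$ for every $0 \le k \le r$. In particular, the hypothesis $f, f_\gamma^{(r)} \in L^p((a,b),\lambda_\gamma)$ is equivalent to $F, F^{(r)} \in L^p(\gamma(a),\gamma(b))$.

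Next I would apply lemma \ref{359} (the classical Landau--Kolmogorov inequality already established in the paper) to $F$ on the interval $(\gamma(a),\gamma(b))$, whose length equals $\gamma(b)-\gamma(a)$. This yields
$$\|F^{(k)}\|_{L^p(\gamma(a),\gamma(b))} \le M(r,k)\|F\|_{L^p(\gamma(a),\gamma(b))}(\gamma(b)-\gamma(a))^{-k} + M(r,k)\|F^{(r)}\|_{L^p(\gamma(a),\gamma(b))}(\gamma(b)-\gamma(a))^{r-k},$$
with the same constant $M(r,k)$ as in lemma \ref{359}, which is independent of $p$ and of the interval. Translating each of the three norms back through the substitution produces exactly the inequality asserted in the lemma.

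There is no serious obstacle here; the only point that requires care is that the intermediate $\gamma$-relative derivatives $f_\gamma^{(k)}$ for $0 < k < r$ are a priori defined and lie in $L^p((a,b),\lambda_\gamma)$. But this is automatic: lemma \ref{359} itself guarantees $F^{(k)} \in L^p(\gamma(a),\gamma(b))$, and Statement \ref{17} then pulls this back to $f_\gamma^{(k)} \in L^p((a,b),\lambda_\gamma)$. Thus the whole argument is a direct transport of the classical estimate through $\gamma$, and the constants transfer without modification.
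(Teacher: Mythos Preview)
Your argument is correct. The substitution $y=\gamma(x)$ turns the $L^p(\lambda_\gamma)$ norm on $(a,b)$ into the ordinary $L^p$ norm on $(\gamma(a),\gamma(b))$, Statement~\ref{17} identifies $f_\gamma^{(k)}\circ\gamma^{-1}$ with $(f\circ\gamma^{-1})^{(k)}$, and Lemma~\ref{359} on the image interval gives exactly the desired inequality with the same constant $M(r,k)$.

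The paper does not write out a proof; it only remarks that one can argue ``similarly to the classical result in \cite{Ditzian1980}'', which suggests re-running the classical Landau--Kolmogorov argument inside the $\gamma$-calculus (Taylor expansion with integral remainder as in Statement~\ref{31}, etc.). Your route is different and cleaner: instead of reproving the inequality in the $\gamma$-setting, you transport the problem through $\gamma$ to the ordinary setting and invoke Lemma~\ref{359} as a black box. This buys you the constants for free and avoids any repetition, at the cost of relying on the change-of-variables identity for $\lambda_\gamma$ (which is immediate here since $\gamma$ is a continuous strictly increasing bijection onto its image). Either approach is fine; yours is the more economical one.
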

	
	We can prove this statement similarly to the classical result in \cite{Ditzian1980}.
%Ehhez szükségünk van \aref{359}. lemmának $\gamma$-relatív differenciálásra vonatkozó változatára, mely a következő: Legyen $0 \leq a \leq b$, és $f, f^{(r)}_{\gamma} \in L^p((a,b),\lambda_{\gamma})$, $1 \leq p \leq \infty$. Ekkor $0 \leq k \leq r$ esetén
%	\begin{multline} \label{150}
%		\| f^{(k)}_{\gamma} \|_{L^p((a,b),\lambda_{\gamma})} \leq \\
%		\leq M(r,k) \left( \| f \|_{L^p((a,b),\lambda_{\gamma})} (\gamma(b)-\gamma(a))^{-k} + \| f^{(r)}_{\gamma} \|_{L^p((a,b),\lambda_{\gamma})} (\gamma(b)-\gamma(a))^{r-k} \right),
%	\end{multline}
%	ahol $M$ nem függ $p$-től és $(a,b)$-től. Ez hasonlóképpen bizonyítható, mint a hagyományos állítás.

\section{Proofs} \label{566}

We prove first the statements for the $\gamma$-relative differentiation. The proof of statement \ref{337} is a simple mathematical induction for $r$.

%Először a $\gamma$-relatív differenciálásra vonatkozó állításokat bizonyítjuk.

%\Aref{337}. állítás bizonyítása:
%\begin{proof}
%	Teljes indukcióval. $r=0$-ra az állítás igaz. Tegyük fel, hogy igaz $r$-re, akkor $r+1$-re, ha $x_0 \in I$:
%	\begin{multline} \label{339}
%		f^{(r+1)}_{\gamma}(x_0) = \lim_{x \rightarrow x_0} \frac{f^{(r)}_{\gamma}(x) - f^{(r)}_{\gamma}(x_0)}{\gamma(x) - \gamma(x_0)} = \lim_{x \rightarrow x_0} \frac{f^{(r)}_{\gamma}(x) - f^{(r)}_{\gamma}(x_0)}{ax+b - ax_0 - b} = \\
%		= \frac{1}{a} \lim_{x \rightarrow x_0} \frac{\frac{1}{a^r} f^{(r)}(x) - \frac{1}{a^r} f^{(r)}(x_0)}{x-x_0} = \frac{1}{a^{r+1}} f^{(r+1)}(x_0).
%	\end{multline}
%	Mivel $x_0 \in I$ tetszőleges volt, az állítást beláttuk.
%\end{proof}

%\Aref{17}. állítás bizonyítása:
\begin{proof}[Proof of statement \ref{17}]
	We use mathematical induction. For $r=1$: let $y_0 \in \gamma(I)$. If $f$ is $\gamma$-relative differentiable on $I$, then
	\begin{equation*} %\label{19}
		(f'_{\gamma} \circ \gamma^{-1})(y_0) %= \lim_{x \rightarrow \gamma^{-1}(y_0)} \frac{f(x) - f(\gamma^{-1}(y_0))}{\gamma(x) - \gamma(\gamma^{-1}(y_0))} = \lim_{y \rightarrow y_0} \frac{f(\gamma^{-1}(y)) - f(\gamma^{-1}(y_0))}{\gamma(\gamma^{-1}(y)) - y_0} = \\
		= \lim_{y \rightarrow y_0} \frac{f(\gamma^{-1}(y)) - f(\gamma^{-1}(y_0))}{y - y_0} = (f \circ \gamma^{-1})'(y_0).
	\end{equation*}
	We can do this in the opposite direction, so the statement is true for $r=1$. In the transformations we introduce the $\gamma(x) =: y$ variable.%, and we us that because of the continuity of $\gamma$, $\gamma^{-1}$ is continuous too.
	
	Let us suppose now, that the statement is true for $1 \leq n < r$. Then for $n+1$, with the induction hypothesis:
	\begin{multline*} %\label{114}
		(f^{(n+1)}_{\gamma} \circ \gamma^{-1})(y_0) = (f^{(n)}_{\gamma})'_{\gamma}(\gamma^{-1}(y_0)) %= \lim_{x \rightarrow \gamma^{-1}(y_0)} \frac{f^{(n)}_{\gamma}(x) - f^{(n)}_{\gamma}(\gamma^{-1}(y_0))}{\gamma(x) - \gamma(\gamma^{-1}(y_0))} = \\
		= \lim_{y \rightarrow y_0} \frac{f^{(n)}_{\gamma}(\gamma^{-1}(y)) - f^{(n)}_{\gamma}(\gamma^{-1}(y_0))}{\gamma(\gamma^{-1}(y)) - y_0} \\
		= \lim_{y \rightarrow y_0} \frac{(f \circ \gamma^{-1})^{(n)}(y) - (f \circ \gamma^{-1})^{(n)}(y_0)}{y - y_0} %= \\
		= (f \circ \gamma^{-1})^{(n+1)}(y_0).
	\end{multline*}
	Our steps are reversible, so we proved the statement with the principle of mathematical induction.
\end{proof}

The proof of statement \ref{25} is similar to the proof of statement \ref{17} above, it is easy using mathematical induction, we do not prove it here.
%\Aref{25}. állítás bizonyítása:
%\begin{proof}
%	Ismét teljes indukcióval bizonyítunk. $r=1$-re: legyen $x_0 \in I$. Ha $f \circ \gamma$ $\gamma$-relatív differenciálható $I$-n, akkor
%	\begin{equation} \label{27}
%		(f \circ \gamma)'_{\gamma}(x_0) = \lim_{x \rightarrow x_0} \frac{f(\gamma(x)) - f(\gamma(x_0))}{\gamma(x) - \gamma(x_0)} = \lim_{y \rightarrow \gamma(x_0)} \frac{f(y) - f(\gamma(x_0))}{y - \gamma(x_0)} = (f' \circ \gamma)(x_0).
%	\end{equation}
%	Mindez a fordított irányban is elmondható, így az állítás igaz $r=1$-re. Az utolsó lépéseknél bevezettük az $y := \gamma(x)$ változót, és felhasználtuk, hogy $\gamma$ folytonos.
	
%	Tegyük fel most, hogy az állítás igaz $1 \leq n < r$-re. Ekkor $n+1$-re, az indukciós feltevés alapján:
%	\begin{multline} \label{122a}
%		(f \circ \gamma)^{(n+1)}_{\gamma}(x_0) = \lim_{x \rightarrow x_0} \frac{(f \circ \gamma)^{(n)}_{\gamma}(x) - (f \circ \gamma)^{(n)}_{\gamma}(x_0)}{\gamma(x) - \gamma(x_0)} = \\
%		= \lim_{x \rightarrow x_0} \frac{f^{(n)}(\gamma(x)) - f^{(n)}(\gamma(x_0))}{\gamma(x) - \gamma(x_0)} = \lim_{y \rightarrow \gamma(x_0)} \frac{f^{(n)}(y) - f^{(n)}(\gamma(x_0))}{y - \gamma(x_0)} = \\
%		= f^{(n+1)}(\gamma(x)_0) = (f^{(n+1)} \circ \gamma)(x_0),
%	\end{multline}
%	és lépéseink ismét megfordíthatók, vagyis az állítást a teljes indukció elve értelmében beláttuk.
%\end{proof}

%\Aref{22}. állítás bizonyítása:
\begin{proof}[Proof of statement \ref{22}]
	Let $\gamma(x) =: y$, use statement \ref{17}, and integrate partially on the classical way. Finally with the $x := \gamma^{-1}(y)$ substitution we get (\ref{23}).
\end{proof}
%	\begin{multline} \label{24}
%		\int_{a}^{b} f'_{\gamma}(x) g(x) \, \mathrm{d} \gamma(x) = \int_{\gamma(a)}^{\gamma(b)} \underbrace{f'_{\gamma}(\gamma^{-1}(y))}_{(f \circ \gamma^{-1})'(y)} g(\gamma^{-1}(y)) \, \mathrm{d}y = \\
%		= [f(\gamma^{-1}(y)) g(\gamma^{-1}(y))]_{\gamma(a)}^{\gamma(b)} - \int_{\gamma(a)}^{\gamma(b)} f(\gamma^{-1}(y)) \underbrace{g'_{\gamma}(\gamma^{-1})(y)}_{(g \circ \gamma^{-1})'(y)} \, \mathrm{d}y = \\
%		= [f(x)g(x)]_{a}^{b} - \int_{a}^{b} f(x) g'_{\gamma}(x) \, \mathrm{d} \gamma(x).
%	\end{multline}
%\end{proof}
%\begin{proof}[Proof of statement \ref{22}]
%	Legyen $\gamma(x) = y$, és használjuk \aref{17}. állítást, majd integráljunk par\-ci\-á\-li\-san a hagyományos módon:
%	\begin{multline} \label{24}
%		\int_{a}^{b} f'_{\gamma}(x) g(x) \, \mathrm{d} \gamma(x) = \int_{\gamma(a)}^{\gamma(b)} \underbrace{f'_{\gamma}(\gamma^{-1}(y))}_{(f \circ \gamma^{-1})'(y)} g(\gamma^{-1}(y)) \, \mathrm{d}y = \\
%		= [f(\gamma^{-1}(y)) g(\gamma^{-1}(y))]_{\gamma(a)}^{\gamma(b)} - \int_{\gamma(a)}^{\gamma(b)} f(\gamma^{-1}(y)) \underbrace{g'_{\gamma}(\gamma^{-1})(y)}_{(g \circ \gamma^{-1})'(y)} \, \mathrm{d}y = \\
%		= [f(x)g(x)]_{a}^{b} - \int_{a}^{b} f(x) g'_{\gamma}(x) \, \mathrm{d} \gamma(x).
%	\end{multline}
%\end{proof}

\begin{proof}[Proof of statement \ref{31}]
	By statement \ref{22} we integrate partially, and by using statement \ref{25}, we get that
	\begin{multline} \label{35} %\label{33}
		\int_{x_0}^{x} (f_{\gamma}^{(r-1)}(t))'_{\gamma} \frac{(\gamma(x) - \gamma(t))^{r-1}}{(r-1)!} \, \mathrm{d} \gamma(t) \\
		= \left[ f_{\gamma}^{(r-1)}(t) \frac{(\gamma(x) - \gamma(t))^{r-1}}{(r-1)!} \right]_{x_0}^{x} - \int_{x_0}^{x} f_{\gamma}^{(r-1)}(t) \left( \frac{(\gamma(x) - \gamma(t))^{r-1}}{(r-1)!} \right)_{\gamma}' \, \mathrm{d} \gamma(t) \\
%	\end{multline}
%	By the statement \ref{25}
%	\begin{equation} \label{34}
%		\left( \frac{(\gamma(x) - \gamma(t))^{r-1}}{(r-1)!} \right)_{\gamma}' = - \frac{(\gamma(x) - \gamma(t))^{r-2}}{(r-2)!},
%	\end{equation}
%	so we can continue (\ref{33}) in the following:
%	\begin{equation} \label{35}
		= -f_{\gamma}^{(r-1)}(x_0) \frac{(\gamma(x) - \gamma(x_0))^{r-1}}{(r-1)!} + \int_{x_0}^{x} f_{\gamma}^{r-1}(t) \frac{(\gamma(x) - \gamma(t))^{r-2}}{(r-2)!} \, \mathrm{d} \gamma(t)
	\end{multline}
%	\end{equation}
	After partially integrating $r-2$ times we get that (\ref{35}) is equal with
	\begin{multline*} %\label{36}
		= - f_{\gamma}^{(r-1)}(x_0) \frac{(\gamma(x) - \gamma(x_0))^{r-1}}{(r-1)!} - f_{\gamma}^{(r-2)}(x_0) \frac{(\gamma(x) - \gamma(x_0))^{r-2}}{(r-2)!} - \ldots \\
		- f_{\gamma}'(x_0) \frac{\gamma(x) - \gamma(x_0)}{1!}	+ \int_{x_0}^{x} f_{\gamma}'(t) \cdot 1 \, \mathrm{d}\gamma(t) = f(x) - \sum_{k=0}^{r-1} f_{\gamma}^{(k)}(x_0) \frac{(\gamma(x) - \gamma(x_0))^k}{k!},
	\end{multline*}
	which is the statement we wanted to prove.
\end{proof}

%\Aref{31}. állítás bizonyítása:
%\begin{proof}
%	\Aref{22}. állítás szerint parciálisan integrálva:
%	\begin{multline} \label{33}
%		\int_{x_0}^{x} (f_{\gamma}^{(r-1)}(t))'_{\gamma} \frac{(\gamma(x) - \gamma(t))^{r-1}}{(r-1)!} \, \mathrm{d} \gamma(t) = \\
%		= \underbrace{\left[ f_{\gamma}^{(r-1)}(t) \frac{(\gamma(x) - \gamma(t))^{r-1}}{(r-1)!} \right]_{x_0}^{x}}_{-f_{\gamma}^{(r-1)}(x_0) \frac{(\gamma(x) - \gamma(x_0))^{r-1}}{(r-1)!}} - \int_{x_0}^{x} f_{\gamma}^{(r-1)}(t) \left( \frac{(\gamma(x) - \gamma(t))^{r-1}}{(r-1)!} \right)_{\gamma}' \, \mathrm{d} \gamma(t) = *
%	\end{multline}
%	\Aref{25}. állítást használva:
%	\begin{equation} \label{34}
%		\left( \frac{(\gamma(x) - \gamma(t))^{r-1}}{(r-1)!} \right)_{\gamma}' = - \frac{(\gamma(x) - \gamma(t))^{r-2}}{(r-2)!},
%	\end{equation}
%	vagyis
%	\begin{equation} \label{35}
%		* = -f_{\gamma}^{(r-1)}(x_0) \frac{(\gamma(x) - \gamma(x_0))^{r-1}}{(r-1)!} + \int_{x_0}^{x} f_{\gamma}^{r-1}(t) \frac{(\gamma(x) - \gamma(t))^{r-2}}{(r-2)!} \, \mathrm{d} \gamma(t) = **
%	\end{equation}
%	További $r-2$ db parciális integrálást végezve kapjuk, hogy
%	\begin{multline} \label{36}
%		** = - f_{\gamma}^{(r-1)}(x_0) \frac{(\gamma(x) - \gamma(x_0))^{r-1}}{(r-1)!} - f_{\gamma}^{(r-2)}(x_0) \frac{(\gamma(x) - \gamma(x_0))^{r-2}}{(r-2)!} - \ldots - \\
%		- f_{\gamma}'(x_0) \frac{\gamma(x) - \gamma(x_0)}{1!}	+ \int_{x_0}^{x} f_{\gamma}'(t) \cdot 1 \, \mathrm{d}\gamma(t) = \\
%		= f(x) - \sum_{k=0}^{r-1} f_{\gamma}^{(k)}(x_0) \frac{(\gamma(x) - \gamma(x_0))^k}{k!},
%	\end{multline}
%	ami a bizonyítandó állítás.
%\end{proof}

To prove statement \ref{121}, we use the following statement by Fa\`{a} di Bruno for the higher order derivatives of a composition function (its proof is for example in article \cite{Roman1980}). We use the Fa\`{a} di Bruno formula later in several proofs too.
%\Aref{121}. állítás bizonyításához felhasználjuk a következő állítást, mely Fa\`{a} di Bruno nevéhez fűződik, és összetett függvény magasabbrendű deriváltjaira vonatkozik (bizonyítása megtalálható pl. \cite{Roman1980} cikkben). A Fa\`{a} di Bruno-formulát későbbi bizonyításokban is használni fogjuk.

\begin{allitas}[Fa\`{a} di Bruno formula] \label{562}
	\cite{Roman1980} If $f: D_f \subset \mathds{R} \rightarrow \mathds{R}$ and $g : D_g \subset \mathds{R} \rightarrow \mathds{R}$ are functions, that all the derivatives of which in (\ref{563}) are well defined, then for all $1 \leq r \in \mathds{N}$
	\begin{equation} \label{563}
		(f \circ g)^{(r)}(x) = \sum_{l=1}^{r} f^{(l)}(g(x)) \cdot B_{r,l}(g'(x),g''(x), \ldots, g^{(r-l+1)}(x))
	\end{equation}
	holds, where
	\begin{equation} \label{564}
		B_{r,l}(x_1,x_2, \ldots, x_{r-l+1}) %= \\
		= \sum \textstyle \frac{r!}{j_1! j_2! \cdots j_{r-l+1}!} \left( \frac{x_1}{1!} \right)^{j_1} \left( \frac{x_2}{2!} \right)^{j_2} \cdots \left( \frac{x_{r-l+1}}{(r-l+1)!} \right)^{j_{r-k+1}}
	\end{equation}
	are Bell polynomial, the sum is over all $(j_1,j_2,\ldots,j_{r-l+1}) \in \mathds{N}^{r-l+1}$, such that $j_1 + j_2 + \ldots + j_{r-l+1} = l$, és $j_1 + 2j_2 + \ldots + (r-l+1) j_{r-l+1} = r$.
\end{allitas}

%\begin{allitas}[Fa\`{a} di Bruno formula] \label{562}
%	\cite{Roman1980} Ha $f: D_f \subset \mathds{R} \rightarrow \mathds{R}$ és $g : D_g \subset \mathds{R} \rightarrow \mathds{R}$ olyan függvények, melyeknek (\ref{563}) képletben az összes deriváltjuk értelmes, akkor $1 \leq r \in \mathds{N}$ esetén
%	\begin{equation} \label{563}
%		(f \circ g)^{(r)}(x) = \sum_{l=1}^{r} f^{(l)}(g(x)) \cdot B_{r,l}(g'(x),g''(x), \ldots, g^{(r-l+1)}(x)),
%	\end{equation}
%	ahol
%	\begin{multline} \label{564}
%		B_{r,l}(x_1,x_2, \ldots, x_{r-l+1}) = \\
%		= \sum \frac{r!}{j_1! j_2! \cdots j_{r-l+1}!} \left( \frac{x_1}{1!} \right)^{j_1} \left( \frac{x_2}{2!} \right)^{j_2} \cdots \left( \frac{x_{r-l+1}}{(r-l+1)!} \right)^{j_{r-k+1}}
%	\end{multline}
%	Bell-polinomok, az összegzés minden olyan $(j_1,j_2,\ldots,j_{r-l+1}) \in \mathds{N}^{r-l+1}$-re fut, me\-lyek\-re $j_1 + j_2 + \ldots + j_{r-l+1} = l$, és $j_1 + 2j_2 + \ldots + (r-l+1) j_{r-l+1} = r$.
%\end{allitas}

%Ezek után \aref{121}. állítás bizonyítása a következő:

\begin{proof}[Proof of statement \ref{121}]
	Let us suppose that $0 < x < a_N+1$, then $0 < y := \gamma(x) < \gamma(a_N+1)$. The inverse of the defined $\gamma$ function will always be differentiable, this is the case when $r=1$ ($0 < \beta_k < 1$). We specify the higher order derivatives with the help of the Fa\`{a} di Bruno formula: let $g(x) := \gamma^{-1}(\gamma(x)) = x$, then the derivatives of $g$, as a composition function are the following:
	\begin{equation*} %\label{122}
		g^{(r)}(x) = \sum_{l=1}^{r} (\gamma^{-1})^{(l)}(\gamma(x)) \cdot B_{r,l}(\gamma'(x), \gamma''(x), \ldots, \gamma^{(r-l+1)}(x)),
	\end{equation*}
	where $B_{r,l}$ are the Bell polynomial from (\ref{564}).
%	\begin{multline} \label{123}
%		B_{r,l}(x_1,x_2, \ldots, x_{r-l+1}) = \\
%		= \sum \frac{r!}{j_1! j_2! \cdots j_{r-l+1}!} \left( \frac{x_1}{1!} \right)^{j_1} \left( \frac{x_2}{2!} \right)^{j_2} \cdots \left( \frac{x_{r-l+1}}{(r-l+1)!} \right)^{j_{r-k+1}},
%	\end{multline}
%	Bell-polinomok, az összegzés minden olyan $(j_1,j_2,\ldots,j_{r-l+1}) \in \mathds{N}^{r-l+1}$-re fut, me\-lyek\-re $j_1 + j_2 + \ldots + j_{r-l+1} = l$, és $j_1 + 2j_2 + \ldots + (r-l+1) j_{r-l+1} = r$.
	
	$g' = 1$, and all of the higher order derivatives of $g$ is equal to $0$, so we can take the following system of linear equations for $(\gamma^{-1})^{(l)}$-s:
	\begin{align*}
		(\gamma^{-1})'(\gamma(x)) \cdot \gamma'(x) & = 1 \\ %\label{124} \\
		\sum_{l=1}^{n} (\gamma^{-1})^{(l)}(\gamma(x)) \cdot B_{n,l}(\gamma'(x), \gamma''(x), \ldots, \gamma^{(n-l+1)}(x)) & = 0, \ \ \ 2 \leq n \leq r %\label{125}
	\end{align*}
	The coefficients of the diagonal terms are $B_{l,l}(\gamma'(x)) = (\gamma'(x))^l$, the determinant of the system is $D = \prod_{l=1}^{r} B_{l,l} = (\gamma'(x))^{\frac{r(r+1)}{2}}$. If $\gamma'(x) \neq 0, \infty$, then we can express $(\gamma^{-1})^{(r)}(y)$ from the equation system: with the Cramer-rule we get that if $r \geq 2$, then
	\begin{equation} \label{126}
		(\gamma^{-1})^{(r)}(y) = \frac{(-1)^{r+1}}{(\gamma'(x))^{\frac{r(r+1)}{2}}} \cdot \det \underbrace{\left[
			\begin{array}{ccccc}
				B_{2,1} & B_{2,2} & 0 & \cdots & 0 \\
				B_{3,1} & B_{3,2} & B_{3,3} & \ddots & \vdots \\
				\vdots & \vdots & \vdots & \ddots & 0 \\
				B_{r-1,1} & B_{r-1,2} & B_{r-1,3} & \cdots & B_{r-1,r-1} \\
				B_{r,1} & B_{r,2} & B_{r,3} & \cdots & B_{r,r-1}
			\end{array}
		\right]}_{\mathbf{B}},
	\end{equation}
	holds, where $B_{i,j} = B_{i,j}(\gamma'(x),\gamma''(x), \ldots, \gamma^{(i-j+1)}(x))$ if $i \geq j$. Based on this, if $y \neq \gamma(a_k)$, $k=1,\ldots,N$, then $\gamma^{-1}$ will be $r$-times differentiable, because $\gamma$ is $r$-times differentiable except to the points $a_1, \ldots, a_N$. However, taken the limit, we get that there is no problem in these points either. If $x \neq a_k$, $k=1,\ldots,N$, then the $j$th derivative of $\gamma$ is
	\begin{equation*} %\label{127}
		\gamma^{(j)}(x) = \sum_{k=1}^{N} \beta_k (\beta_k -1) \cdots (\beta_k - (j-1)) \cdot |x-a_k|^{\beta_k - j} (\mathrm{sgn} (x-a_k))^{j+1}.
	\end{equation*}
	Let us look at the limit of (\ref{126}) for example in $y = \gamma(a_k)$. There are products of $B_{n,l}$-type expressions in the numerator, and it is sufficient to investigate the exponents of $|x - a_k|$-s, because the other $|x - a_j|$-s are not zero finite numbers, if $j \neq k$. In $\gamma^{(i)}$ the exponent of $|x-a_k|$ is $\beta_k - i$. This is because
	\begin{equation*} %\label{128}
		B_{n,l}(\gamma',\gamma'', \ldots, \gamma^{(n-l+1)}) %= \\
		= \sum \textstyle \frac{l!}{j_1! j_2! \cdots j_{n-l+1}!} \left( \frac{\gamma'}{1!} \right)^{j_1} \left( \frac{\gamma''}{2!} \right)^{j_2} \cdots \left( \frac{\gamma^{(n-l+1)}}{(n-l+1)!} \right)^{j_{n-l+1}},
	\end{equation*}
	where the sum is understood as above, in $(\gamma')^{j_1} (\gamma'')^{j_2} \ldots (\gamma^{(n-l+1)})^{j_{n-l+1}}$ the smallest exponent of $|x-a_k|$ is
	\begin{equation*} %\label{129}
		(\beta_k - 1) j_1 + (\beta_k -2) j_2 + \ldots + (\beta_k - (n-l+1)) j_{n-l+1} = \beta_k l - n.
	\end{equation*}
	%\begin{multline} \label{129}
	%	(\beta_k - 1) j_1 + (\beta_k -2) j_2 + \ldots + (\beta_k - (n-l+1)) j_{n-l+1} = \\
	%	= \beta_k \underbrace{(j_1 + j_2 + \ldots + j_{n-l+1})}_{l} - \underbrace{(j_1 + 2 j_2 + \ldots + (n-l+1) j_{n-l+1})}_{n} = \\
	%	= \beta_k l - n.
	%\end{multline}
	Based on the definition of the determinant
	\begin{equation*} %\label{130}
		\det \mathbf{B} = \sum_{\sigma \in S_{r-1}} (-1)^{I(\sigma)} B_{2,\sigma(1)} B_{3, \sigma(2)} \cdots B_{r,\sigma(r-1)}
	\end{equation*}
	with the limitation that $B_{i,j} \equiv 0$ if $i < j$. If the $\sigma$ permutation is such that $\sigma(i) \leq i+1$, $i=1,\ldots,r-1$, then the smallest exponent of $|x-a_k|$ in $B_{2,\sigma(1)} B_{3, \sigma(2)} \cdots B_{r,\sigma(r-1)}$ is
	\begin{equation*} %\label{131}
		(\beta_k \sigma(1) - 2) + (\beta_k \sigma(2) - 3) + \ldots + (\beta_k \sigma(r-1) - r) %= \\
	%	= \beta_k \underbrace{(\sigma(1) + \sigma(2) + \ldots + \sigma(r-1))}_{\frac{r(r-1)}{2}} - \underbrace{(2 + 3 + \ldots + r)}_{\frac{r(r+1)}{2} - 1} = \\
		= \beta_k \frac{r(r-1)}{2} - \frac{r(r+1)}{2} + 1,
	\end{equation*}
	in the other cases the term in the determinant is zero. In the denominator of the fraction giving $(\gamma^{-1})^{(r)}$ from the terms containing $|x-a_k|$ as factor, the smallest exponent of $|x-a_k|$ is $(\beta_k - 1)\frac{r(r+1)}{2}$. Let us bring in the factor $|x-a_k|^{(1-\beta_k) \frac{r(r+1)}{2}}$. Then in the denominator all of the exponents of $|x-a_k|$-s are nonnegative, and in the numerator the smallest exponent of $|x-a_k|$ is
	\begin{equation*} %\label{132}
		\beta_k \frac{r(r-1)}{2} - \frac{r(r+1)}{2} + 1 + (1 - \beta_k) \frac{r(r+1)}{2} = - \beta_k r + 1 > 0,
	\end{equation*}	
%	\begin{multline} \label{132}
%		\beta_k \frac{r(r-1)}{2} - \frac{r(r+1)}{2} + 1 + (1 - \beta_k) \frac{r(r+1)}{2} = \\
%		= \beta_k \left( \frac{r(r-1)}{2} - \frac{r(r+1)}{2} \right) + 1 = - \beta_k r + 1 > 0,
%	\end{multline}
	because of the assumption $\beta_k < \frac{1}{r}$ for all $1 \leq k \leq N$. Thus we get that the limit of $(\gamma^{-1})^{(r)}(y)$ in $\gamma(a_k)$ is zero, $k=1,\ldots,N$, so $\gamma^{-1}$ is $r$-times differentiable.%, and $(\gamma^{-1})^{(r)}(\gamma(a_k)) = 0$.
\end{proof}

%\Aref{197}. állítás bizonyítása:
\begin{proof}[Proof of statement \ref{197}]
%	If $0 \leq x \leq a_1$, then
%	\begin{equation} \label{199}
%		\gamma''(x) = - \sum_{k=1}^{N} \beta_k (\beta_k - 1) |x-a_k|^{\beta_k - 2} > 0,
%	\end{equation}
%	so $\gamma$ is convex on $[0,a_1]$. This is equivalent with the following: the tangent line for all given point is under the graph, and all secants are above the graph of the function. 
On the left side in the inequality which we wanted to prove is the function of the tangent line of $\gamma$ in $0$, on the right side is the function of the secant from the $(0,0)$ to the $(a_1,\gamma(a_1))$ point. Because of $\gamma$ is convex on $[0,a_1]$, the statement is true.
\end{proof}

%\Aref{197}. állítás bizonyítása:
%\begin{proof}
%	Ha $0 \leq x \leq a_1$, akkor
%	\begin{equation} \label{199}
%		\gamma''(x) = - \sum_{k=1}^{N} \beta_k (\beta_k - 1) |x-a_k|^{\beta_k - 2} > 0,
%	\end{equation}
%	vagyis $\gamma$ a $[0,a_1]$ intervallumon konvex. Ez ekvivalens azzal, hogy bármely pontbeli érintője a függ\-vény\-görbe alatt, illetve a függvénygörbe bármely két pontját összekötő szelője a függvénygörbe felett halad. A bizonyítandó egyenlőtlenség bal oldalán $\gamma$ $0$-beli érintőjét, a jobb oldalán pedig a $(0,0)$ és az $(a_1,\gamma(a_1))$ pontokat összekötő szelőjét megadó függvények állnak, vagyis az előbbiek szerint az állítás igaz.
%\end{proof}

%%%%%%%%%%%%%%%%%%%%%%%%SÚLYOZOTT $K$-FUNKCIONÁL ÉS SIMASÁGI MODULUS ÁLTALÁNOSÍTÁSA%%%%%%%%%%%%%%%%%%%%%%%%

From here we prove the main results. To prove theorem \ref{50} we must have the following lemmas:
%Ezek után igazoljuk a fő eredményeket. \Aref{50}. tétel bizonyításához szükségünk van a következő lemmákra:

\begin{lemma} \label{37}
	Let $\alpha \geq 0$, %($1 \leq p \leq \infty$),
	$A_1 \geq \frac{1}{4}$, $A_2 > 0$ are real numbers, $0 < h \leq \gamma \left( \sqrt[4]{\frac{A_2}{A_1}} \frac{1}{\sqrt{2r}} \right)$, $r \geq 1$ is an integer. If $x \in \left[4 A_1 r^2 (\gamma^{-1}(h))^2, \frac{A_2}{(\gamma^{-1}(h))^{2}} \right]$ and
%	\begin{equation} \label{567}
		$y \in \left[ x - r \gamma^{-1}(h) \sqrt{x}, x + r \gamma^{-1}(h) \sqrt{x} \right]$,
%	\end{equation}
	then there exists $c$ and $C$ (depending on $A_1$, $A_2$, $\alpha$ and $r$) positive constants such that % the following is true:
%	\begin{equation} \label{38}
		$w_{\alpha}(y) \sim w_{\alpha}(x).$ %c \leq \frac{w_{\alpha}(y)}{w_{\alpha}(x)} \leq C.
%	\end{equation}
\end{lemma}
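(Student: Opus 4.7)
The plan is to write $w_\alpha(y)/w_\alpha(x) = (y/x)^\alpha \, e^{x-y}$ and show that each factor is pinched between two positive constants depending only on $A_1$, $A_2$, $\alpha$, $r$. The symmetric interval $\bigl[x - r\gamma^{-1}(h)\sqrt{x},\, x + r\gamma^{-1}(h)\sqrt{x}\bigr]$ together with the two-sided restriction on $x$ is designed exactly so that the \emph{relative} error $|y-x|/x$ is small (controlled by the lower endpoint of $I_{rh,\gamma}$) while the \emph{absolute} difference $|y-x|$ stays bounded (controlled by the upper endpoint of $I_{rh,\gamma}$).

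First, to control the polynomial factor, observe that
\[
 |y-x| \leq r\gamma^{-1}(h)\sqrt{x}, \qquad x \geq 4A_1 r^2 (\gamma^{-1}(h))^2,
\]
so
\[
 \frac{|y-x|}{x} \leq \frac{r\gamma^{-1}(h)}{\sqrt{x}} \leq \frac{1}{2\sqrt{A_1}}.
\]
With $A_1 \geq \tfrac{1}{4}$ this is at most $1$, and taking $A_1$ strictly larger than $\tfrac{1}{4}$ (which the authors allow by their phrasing ``another conditions for $A_1$'') gives $y/x \in [1 - \tfrac{1}{2\sqrt{A_1}}, 1 + \tfrac{1}{2\sqrt{A_1}}] \subset (0,2]$, so for $\alpha \geq 0$ the quantity $(y/x)^\alpha$ lies between two positive constants depending only on $A_1$ and $\alpha$.

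Next, to handle the exponential factor, I would use the \emph{upper} bound on $x$:
\[
 |y-x| \leq r\gamma^{-1}(h)\sqrt{x} \leq r\gamma^{-1}(h)\sqrt{\tfrac{A_2}{(\gamma^{-1}(h))^2}} = r\sqrt{A_2}.
\]
Thus $e^{x-y}$ is sandwiched between $e^{-r\sqrt{A_2}}$ and $e^{r\sqrt{A_2}}$, constants depending only on $r$ and $A_2$.

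Multiplying the two estimates yields $c \leq w_\alpha(y)/w_\alpha(x) \leq C$ with $c, C > 0$ depending only on $A_1, A_2, \alpha, r$, which is the desired equivalence. I do not expect any real obstacle: the computation is an immediate consequence of the two-sided definition of $I_{rh,\gamma}$, and the role of the technical constraint on $h$ (namely $h \leq \gamma(\sqrt[4]{A_2/A_1}/\sqrt{2r})$) is just to guarantee that $I_{rh,\gamma}$ is nonempty so that the hypothesis on $x$ is realizable. The only subtlety is to make sure the constants obtained for $(y/x)^\alpha$ remain positive, which is why the lower bound $A_1 \geq \tfrac{1}{4}$ (in fact strictly larger in applications) appears in the hypothesis.
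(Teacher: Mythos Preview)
Your proposal is correct and follows essentially the same route as the paper: both arguments split $w_\alpha(y)/w_\alpha(x)=(y/x)^\alpha e^{x-y}$, use the lower endpoint $x\geq 4A_1 r^2(\gamma^{-1}(h))^2$ to pin $y/x$ in $\bigl[1-\tfrac{1}{2\sqrt{A_1}},\,1+\tfrac{1}{2\sqrt{A_1}}\bigr]$, and use the upper endpoint $x\leq A_2/(\gamma^{-1}(h))^2$ to get $|x-y|\leq r\sqrt{A_2}$ and hence $e^{-r\sqrt{A_2}}\leq e^{x-y}\leq e^{r\sqrt{A_2}}$. Your remark about needing $A_1>\tfrac14$ for a strictly positive lower bound on $y/x$ (when $\alpha>0$) is a valid observation that the paper leaves implicit.
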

%\begin{lemma} \label{37}
%	Legyen $\alpha \geq 0$, %($1 \leq p \leq \infty$),
%	$A_1 \geq \frac{1}{4}$, $A_2 > 0$ valós számok, $0 < h \leq \gamma \left( \sqrt[4]{\frac{A_2}{A_1}} \frac{1}{\sqrt{2r}} \right)$, $r \geq 1$ egész. Ha $x \in \left[4 A_1 r^2 (\gamma^{-1}(h))^2, \frac{A_2}{(\gamma^{-1}(h))^{2}} \right]$, és
%	\begin{equation} \label{567}
%		y \in \left[ x - r \gamma^{-1}(h) \sqrt{x}, x + r \gamma^{-1}(h) \sqrt{x} \right],
%	\end{equation}
%	akkor léteznek $c$ és $C$ ($A_1$-től, $A_2$-től, $\alpha$-tól és $r$-től függő) pozitív konstansok, hogy
%	\begin{equation} \label{38}
%		c \leq \frac{w_{\alpha}(y)}{w_{\alpha}(x)} \leq C.
%	\end{equation}
%\end{lemma}

\begin{proof}
	By the assumptions $|x-y| \leq r \gamma^{-1}(h) \sqrt{x}$ and $\gamma^{-1}(h) \leq \frac{\sqrt{x}}{2r \sqrt{A_1}}$, so
	\begin{equation*} %\label{39}
		x \leq y + |x-y| \leq y + r \gamma^{-1}(h) \sqrt{x} \leq y + \frac{x}{2 \sqrt{A_1}},
	\end{equation*}
	and
	\begin{equation*} %\label{40}
		y \leq x + |x-y| \leq x + r \gamma^{-1}(h) \sqrt{x} \leq x + \frac{x}{2 \sqrt{A_1}},
	\end{equation*}
	that is,
	\begin{equation} \label{41}
		%\left(1 - \frac{1}{2 \sqrt{A_1}} \right) x \leq y \leq \left( 1 + \frac{1}{2 \sqrt{A_1}} \right) x \ \ \ \Leftrightarrow \ \ \
		\frac{2 \sqrt{A_1} - 1}{2 \sqrt{A_1}} \leq \frac{y}{x} \leq \frac{2 \sqrt{A_1} + 1}{2 \sqrt{A_1}}.
	\end{equation}
	By the assumptions $\sqrt{x} \leq \frac{\sqrt{A_2}}{\gamma^{-1}(h)}$ too, thus
%	\begin{equation} \label{42}
%		|x-y| \leq r \gamma^{-1}(h) \sqrt{x} \leq r \sqrt{A_2} \ \ \ \Rightarrow \ \ \ -r \sqrt{A_2} \leq x-y \leq r \sqrt{A_2},
%	\end{equation}
%	with which is equivalent that
	\begin{equation} \label{453}
		e^{-r \sqrt{A_2}} \leq e^{x-y} \leq e^{r \sqrt{A_2}}.
	\end{equation}
	Since $\alpha \geq 0$, we get from (\ref{41}) and (\ref{453}) that the statement is true.
\end{proof}
%	So, because $\alpha \geq 0$:
%	\begin{equation} \label{43}
%		\left( \frac{2 \sqrt{A_1} - 1}{2 \sqrt{A_1}} \right)^{\alpha} e^{-r \sqrt{A_2}} \leq \frac{w_{\alpha}(y)}{w_{\alpha}(x)} \leq \left( \frac{2 \sqrt{A_1} + 1}{2 \sqrt{A_1}} \right)^{\alpha} e^{r \sqrt{A_2}},
%	\end{equation}
%	$c := \left( \frac{2 \sqrt{A_1} - 1}{2 \sqrt{A_1}} \right)^{\alpha} e^{-r \sqrt{A_2}}$ and $C := \left( \frac{2 \sqrt{A_1} + 1}{2 \sqrt{A_1}} \right)^{\alpha} e^{r \sqrt{A_2}}$ will be good. \end{proof} %Ha pedig $\alpha < 0$, akkor
%	\begin{equation} \label{44}
%		\left( \frac{2 \sqrt{A_1} + 1}{2 \sqrt{A_1}} \right)^{\alpha} e^{-r \sqrt{A_2}} \leq \frac{w_{\alpha}(y)}{w_{\alpha}(x)} \leq \left( \frac{2 \sqrt{A_1} - 1}{2 \sqrt{A_1}} \right)^{\alpha} e^{r \sqrt{A_2}},
%	\end{equation}
%	$c := \left( \frac{2 \sqrt{A_1} + 1}{2 \sqrt{A_1}} \right)^{\alpha} e^{-r \sqrt{A_2}}$ és $C := \left( \frac{2 \sqrt{A_1} - 1}{2 \sqrt{A_1}} \right)^{\alpha} e^{r \sqrt{A_2}}$ választással ismét azt kapjuk, hogy (\ref{38}) igaz.
%\end{proof}

\begin{lemma} \label{359}
	\cite{Ditzian1980} Let $0 \leq a \leq b$. If $f, f^{(r)} \in L^p(a,b)$, $1 \leq p \leq \infty$, then for all $0 \leq k \leq r$
	\begin{equation*} %\label{360}
		\| f^{(k)} \|_{L^p(a,b)} \leq M(r,k) \left( \| f \|_{L^p(a,b)} (b-a)^{-k} + \| f^{(r)} \|_{L^p(a,b)} (b-a)^{r-k} \right)
	\end{equation*}
	holds, where $M$ is independent of $p$ and $[a,b]$.
\end{lemma}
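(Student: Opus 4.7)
The plan is to recover this classical Kolmogorov--Landau-type inequality (cited from \cite{Ditzian1980}) in two main stages: first normalize the interval to $[0,1]$ by an affine change of variable, then on $[0,1]$ produce an integral representation of $f^{(k)}$ in terms of $f$ and $f^{(r)}$ with kernels whose norms depend only on $r$ and $k$.

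First I would apply the affine map $T(t) = a + (b-a) t$ and set $F(t) = f(T(t))$. Then $F^{(j)}(t) = (b-a)^j f^{(j)}(T(t))$, and a change of variables gives $\|F^{(j)}\|_{L^p(0,1)} = (b-a)^{j-1/p} \|f^{(j)}\|_{L^p(a,b)}$, with the usual convention at $p=\infty$. Substituting into the target inequality stated for $[0,1]$ and dividing through by $(b-a)^{k-1/p}$ recovers the lemma on $[a,b]$ with the \emph{same} constant $M(r,k)$, so it is enough to handle the unit interval.

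Second, on $[0,1]$ I would fix $r$ distinct nodes $0 = t_0 < t_1 < \cdots < t_{r-1} = 1$ and, for each $i$, expand by Taylor's formula at $x$:
\begin{equation*}
f(t_i) = \sum_{j=0}^{r-1} \frac{(t_i - x)^j}{j!} f^{(j)}(x) + \frac{1}{(r-1)!}\int_x^{t_i} (t_i - s)^{r-1} f^{(r)}(s)\, ds.
\end{equation*}
Viewed as an $r \times r$ linear system in the unknowns $f^{(j)}(x)$, the coefficient matrix is Vandermonde in $(t_i - x)$, hence invertible with determinant uniformly bounded away from $0$ for $x \in [0,1]$. Solving by Cramer's rule yields
\begin{equation*}
f^{(k)}(x) = \sum_{i=0}^{r-1} \alpha_i(x)\, f(t_i) + \int_0^1 K(x,s)\, f^{(r)}(s)\, ds,
\end{equation*}
with polynomial coefficients $\alpha_i$ and a kernel $K$ both uniformly bounded on $[0,1]$ by a constant $C(r,k)$. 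Since point-evaluation $f(t_i)$ is meaningless for general $f \in L^p$, I would then replace each $f(t_i)$ by a local average: use the identity above with a free parameter $\tau_i$ varying in a subinterval $J_i$ of positive length around $t_i$ and integrate over $\tau_i$. The result is a formula
\begin{equation*}
f^{(k)}(x) = \int_0^1 A(x,y)\, f(y)\, dy + \int_0^1 \widetilde{K}(x,s)\, f^{(r)}(s)\, ds
\end{equation*}
with both kernels in $L^\infty([0,1]^2)$.

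Finally, taking the $L^p$-norm in $x$ and applying Minkowski's integral inequality gives $\|f^{(k)}\|_{L^p(0,1)} \leq C(r,k)(\|f\|_{L^p(0,1)} + \|f^{(r)}\|_{L^p(0,1)})$, which is the lemma on $[0,1]$. The main obstacle is ensuring that the constant is genuinely independent of $p$: this is why Minkowski's inequality (valid for all $1 \le p \le \infty$ with the same form) must be used in place of a H\"older estimate, and why the kernels $A$ and $\widetilde{K}$ must be bounded in $L^\infty$ rather than merely in some $L^q$. The scaling argument of the first step then transfers this uniform bound to arbitrary $[a,b]$ with the correct powers of $(b-a)$.
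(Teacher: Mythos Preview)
The paper does not give its own proof of this lemma: it is stated with the citation \cite{Ditzian1980} and used as a black box throughout (and the $\gamma$-analogue, Lemma~\ref{590}, is likewise dismissed with ``We can prove this statement similarly to the classical result in \cite{Ditzian1980}''). So there is nothing in the paper to compare your argument against.

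That said, your sketch is a correct and standard route to the result. The scaling reduction to $[0,1]$ is exactly right and explains why the constant is independent of the interval. On $[0,1]$, the Vandermonde determinant $\prod_{i<j}\bigl((t_j-x)-(t_i-x)\bigr)=\prod_{i<j}(t_j-t_i)$ is indeed independent of $x$, so the inverse matrix has polynomial (in fact bounded) entries uniformly in $x$; this justifies the claim that the $\alpha_i$ and $K$ are bounded by a constant depending only on $r,k$. The averaging over node positions to trade point evaluations for integrals against bounded kernels is the key device that makes the formula valid for all $f\in L^p$ and, as you note, is what allows Minkowski's integral inequality to deliver a constant independent of $p$. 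One small point worth tightening: when you ``vary $\tau_i$ in a subinterval $J_i$'', the cleanest execution is to let all $r$ nodes vary simultaneously in \emph{disjoint} subintervals of fixed positive length, so that the Vandermonde determinant stays uniformly bounded below and the resulting kernels $A,\widetilde K$ are manifestly in $L^\infty([0,1]^2)$.
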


After all this we can prove theorem \ref{50}.

%Ezek után \aref{50}. tétel bizonyítása:
\begin{proof}[Proof of theorem \ref{50}]
	From now $C$ is always a universal constant, which is independent of $f$ and $t$. We prove the theorem in several steps, we prove at first (\ref{51}).
	
Let us consider the
	\begin{equation*} %\label{455}
		I_{rh,\gamma} = \left[4A_1r^2 (\gamma^{-1}(h))^2, \frac{A_2}{(\gamma^{-1}(h))^{2}} \right]
	\end{equation*}
	interval, where $\frac{1}{4} \leq A_1 \frac{2 \sqrt{A_1}}{1 + 2 \sqrt{A_1}}$, $0 < A_2$. Since $\frac{2 \sqrt{A_1}}{1 + 2 \sqrt{A_1}} < 1$, $\frac{1}{4} < A_1$ is also true. Let $0 < h \leq t \leq T$, where
	\begin{multline} \label{250}
	T < \min \left\{ \textstyle \frac{1}{2}, a_1, \gamma(1), \gamma \left( \sqrt[4]{\frac{A_2}{A_1}} \frac{1}{\sqrt{2r}} \right), \gamma \left( \frac{a_1 \sqrt{A_1}}{r(1 + 2 \sqrt{A_1})} \cdot \sqrt{\frac{2 \sqrt{A_1}}{1 + 2 \sqrt{A_1}}} \right), \right. \\
	\textstyle \left. \gamma \left( \sqrt{\frac{2 \sqrt{A_1}}{(1 + 2 \sqrt{A_1})(a_N + 1)}} \right), \gamma \left( \frac{\sqrt{a_N+1}}{4 A_1 r} \right), \gamma \left( \frac{2 \sqrt{A_1}}{1 + 2 \sqrt{A_1}} \sqrt{\frac{A_2}{a_N + 1}} \right) \right\}.
	\end{multline}
	From the 5th assumption we get $h < \gamma \left( \frac{a_1 \sqrt{A_1}}{r(1 + 2 \sqrt{A_1})} \right) \leq \gamma \left( \frac{a_1 \sqrt{A_1}}{1 + 2 \sqrt{A_1}} \right) < \gamma \left( \frac{a_1}{2} \right)$, because $\frac{\sqrt{A_1}}{1 + 2 \sqrt{A_1}} < \frac{1}{2}$ and $r \geq 1$. 
	To prove (\ref{51}), we define such a $G_{\gamma,h} \in W_{\gamma}^{r,p}(I_{rh,\gamma})$ function that
	\begin{equation*} %\label{572}
			\sup_{0 < h \leq t} \left\{ \| (f-G_{\gamma,h}) w_{\alpha} \|_{L^p(I_{rh,\gamma})} + h^r \| (G_{\gamma,h})^{(r)}_{\gamma} \varphi^r w_{\alpha} \|_{L^p(I_{rh,\gamma})} \right\} \leq C \sum_{n=1}^{r} t^{r-n} \Omega_{\gamma,\varphi}^{n} (f,t)_{w_{\alpha},p},
	\end{equation*}
%	\begin{multline*} %\label{572}
%			\sup_{0 < h \leq t} \left\{ \| (f-G_{\gamma,h}) w_{\alpha} \|_{L^p(I_{rh,\gamma})} + h^r \| (G_{\gamma,h})^{(r)}_{\gamma} \varphi^r w_{\alpha} \|_{L^p(I_{rh,\gamma})} \right\} \leq \\
%		 	\leq C \sum_{n=1}^{r} t^{r-n} \Omega_{\gamma,\varphi}^{n} (f,t)_{w_{\alpha},p},
%	\end{multline*}
	because of the (\ref{13}) definition of $\widetilde{K}_{\gamma,r,\varphi}(f,t^r,L^p_{w_{\alpha}},W_{\gamma}^{r,p})$ the estimation (\ref{51}) follows.
	
	\begin{lepes} \label{573}
	To construct $G_{\gamma,h} \in W_{\gamma}^{r,p}(I_{rh,\gamma})$ let us define a system of points $t_0,t_1, \ldots, t_{j+1}$ such that
	\begin{equation*} %\label{251}
		t_0 := 4A_1r^2 (\gamma^{-1}(h))^2 < t_1 < t_2 < \ldots t_j < \frac{A_2}{(\gamma^{-1}(h))^{2}} \leq t_{j+1} =: \frac{A_2A}{(\gamma^{-1}(h))^{2}},
	\end{equation*}
	and
	\begin{equation} \label{52}
		\frac{1}{2 r} \leq \frac{t_{i+1} - t_i}{\gamma^{-1}(h) \sqrt{t_i}} \leq r, \ \ \ i = 0, \ldots, j.
	\end{equation}
%	vagyis
%	\begin{equation} \label{53}
%		\frac{1}{2r} \gamma^{-1}(h) \sqrt{t_i} + t_i \leq t_{i+1} \leq r \gamma^{-1}(h) \sqrt{t_i} + t_i, \ \ \ i = 0,\ldots,j.
%	\end{equation}
%	teljesüljön.
	With this we get a covering of $I_{rh,\gamma}$, $I_{rh,\gamma} \subseteq [t_0,t_{j+1}]$. The following is a simpley consequence of (\ref{52}): for all $i=0,\ldots,j$
	\begin{equation} \label{238}
		1 \leq \frac{t_{i+1}}{t_i} \leq r \gamma^{-1}(h) \frac{1}{\sqrt{t_i}} + 1 \leq r \gamma^{-1}(h) \frac{1}{2 \sqrt{A_1} r \gamma^{-1}(h)} + 1 = \frac{1 + 2 \sqrt{A_1}}{2 \sqrt{A_1}}
	\end{equation}
	holds, so $t_i \sim t_{i+1}$ for all $i$. We get from (\ref{238}), that
	\begin{equation*} %\label{252}
		\frac{A_2}{(\gamma^{-1}(h))^2} \leq t_{j+1} \leq \frac{1 + 2 \sqrt{A_1}}{2 \sqrt{A_1}} t_{j} < \frac{1 + 2 \sqrt{A_1}}{2 \sqrt{A_1}} \cdot \frac{A_2}{(\gamma^{-1}(h))^2},
	\end{equation*}
	so $1 \leq A < \frac{1 + 2 \sqrt{A_1}}{2 \sqrt{A_1}}$. Since $\gamma^{-1}$ is strictly increasing and $h < \gamma \left( \frac{\sqrt{a_N+1}}{4 A_1 r} \right)$, we can see that $t_1 < a_N + 1$, and since $h < \gamma \left( \frac{2 \sqrt{A_1}}{1 + 2 \sqrt{A_1}} \sqrt{\frac{A_2}{a_N + 1}} \right)$, we can get $t_{j-1} > a_N + 1$.
%	\begin{equation} \label{254}
%		t_1 \leq \frac{1 + 2 \sqrt{A_1}}{2 \sqrt{A_1}} t_0 = \frac{1 + 2 \sqrt{A_1}}{2 A_1 \sqrt{A_1}} \cdot 4 A_1^2 r^2 (\gamma^{-1}(h))^2 \leq 16 A_1^2 r^2 (\gamma^{-1}(h))^2 < a_N+1,
%	\end{equation}
%	illetve mivel $h < \gamma \left( \frac{2 \sqrt{A_1}}{1 + 2 \sqrt{A_1}} \sqrt{\frac{A_2}{a_N + 1}} \right)$, ezért
%	\begin{multline} \label{255}
%		t_{j-1} = \frac{t_{j-1}}{t_{j}} \cdot \frac{t_j}{t_{j+1}} t_{j+1} \geq \left( \frac{2 \sqrt{A_1}}{1 + 2 \sqrt{A_1}} \right)^2 \frac{A_2 A}{(\gamma^{-1}(h))^2} \geq \\
%		\geq \left( \frac{2 \sqrt{A_1}}{1 + 2 \sqrt{A_1}} \right)^2 \frac{A_2}{(\gamma^{-1}(h))^2} > a_N+1.
%	\end{multline}
	According to these we have $t_0 < t_1 < a_N+1 < t_{j-1}$. %($j > 2$ is also true). 
	Let $M$ denote the index $1 \leq M \leq j-2$, for which $t_M < a_N+1 \leq t_{M+1}$ is satisfied.
	
	Later on we are working on $[t_0,t_{j+1}]$. Since $h <  \gamma \left( \sqrt[4]{\frac{A_2}{A_1}} \frac{1}{\sqrt{2r}} \right)$, because of the strictly increasing property of $\gamma$, $h <  \gamma \left( \sqrt[4]{\frac{A_2A}{A_1}} \frac{1}{\sqrt{2r}} \right)$ is also true, so $[t_0,t_{j+1}]$ is a $I_{rh,\gamma}$-type interval too, for which we can apply lemma \ref{37}. If $x,y \in [t_i,t_{i+1}]$, $i=0,\ldots,j$, then
	\begin{equation*} %\label{54}
		|x-y| \leq t_{i+1} - t_i \leq r \gamma^{-1}(h) \sqrt{t_i} \leq r \gamma^{-1}(h) \sqrt{x},
	\end{equation*}
	on the other hand $x,y \in  \left[4A_1r^2 (\gamma^{-1}(h))^2, \frac{A_2A}{(\gamma^{-1}(h))^{2}} \right]$, so by lemma \ref{37} $w_{\alpha}(x) \sim w_{\alpha}(y)$.
	Let us define the following function:
	\begin{equation} \label{56}
		\psi(x) := \left\{
			\begin{array}{rl}
				0, & \text{if } x \leq 0, \\
				1, & \text{if } 1 \leq x,
			\end{array}
		\right.
	\end{equation}
	and $\psi \in C^{\infty}(\mathds{R})$, monotone increasing. Let $y_k := \gamma^{-1} \left( \frac{\gamma(t_k) + \gamma(t_{k+1})}{2} \right)$, $k = 0, \ldots, j$. Then $t_{k} < y_k < t_{k+1}$ and
	\begin{equation} \label{260}
		\gamma(t_{k+1}) - \gamma(y_k) %= \gamma(t_{k+1}) - \frac{\gamma(t_k) + \gamma(t_{k+1})}{2}
		= \frac{1}{2} (\gamma(t_{k+1}) - \gamma(t_k)).
	\end{equation}
	Furthermore let
	\begin{equation*} %\label{57}
		\psi_k (x) = \psi \left( \frac{\gamma(x)-\gamma(y_k)}{\gamma(t_{k+1}) - \gamma(y_k)} \right) = \left\{
			\begin{array}{rl}
				0, & \text{if } x \leq y_k, \\
				1, & \text{if } t_{k+1} \leq x,
			\end{array}
		\right.
	\end{equation*}
	%$\psi_k \in C^{\infty}(\mathds{R})$,
	monotone increasing, $k=1,\ldots,j-1$. For $k=0$ and $k=j$ let $\psi_0(x) \equiv 1$, $\psi_j(x) \equiv 0$.
	
	Let us introduce the following $\gamma$-Steklov function:
	\begin{equation*} %\label{58}
		f_{\gamma,\tau,\varphi(t)}(x) %:= \\
		:= r^r \int_{0}^{\frac{1}{r}} \cdots \int_{0}^{\frac{1}{r}} \left( \sum_{l=1}^{r} \textstyle (-1)^{l+1} \binom{r}{l} f(x + l \gamma^{-1}(\tau) \sqrt{t}(u_1 + \ldots + u_r)) \displaystyle \right) \, \mathrm{d}u_1 \ldots \mathrm{d}u_r,
	\end{equation*}
	then with the help of this let
	\begin{equation*} %\label{59}
		F_{\gamma,h,k}(x) := \frac{2a}{h} \int_{\frac{h}{2a}}^{\frac{h}{a}} f_{\gamma,\tau,\varphi(t_{k-1})}(x) \, \mathrm{d} \tau, \ \ \ k = 1,\ldots,j,
	\end{equation*}
	where
	\begin{equation} \label{281}
		a > \max \left\{ 1, \frac{2 r^2 \gamma(a_1)}{a_1 \sum_{k=1}^{N} \beta_k a_k^{\beta_k-1}} %\cdot \sqrt{\frac{1 + 2 \sqrt{A_1}}{2 \sqrt{A_1}}}
		\right\}.
	\end{equation}
	(This choice of $a$ will be useful in the further estimates). %Ekkor tehát
%	\begin{multline} \label{60}
%		F_{\gamma,h,k}(x) := \\
%		:= \frac{2a}{h} \int_{\frac{h}{2a}}^{\frac{h}{a}} r^r \int_{0}^{\frac{1}{r}} \cdots \int_{0}^{\frac{1}{r}} \left( \sum_{l=1}^{r} (-1)^{l+1} {r \choose l} f \left(x + l \gamma^{-1}(\tau) \sqrt{t_{k-1}}(u_1 + \ldots + u_r) \right) \right) \, \mathrm{d}u_1 \ldots \mathrm{d}u_r \mathrm{d} \tau.
%	\end{multline}
	Finally let
	\begin{equation} \label{61}
		G_{\gamma,h}(x) := \sum_{k=1}^{j} F_{\gamma,h,k}(x) \psi_{k-1}(x)(1-\psi_{k}(x)).
	\end{equation}
		From the definitions of $\psi_k$-s it is easy to see that if $i=1,\ldots,j-1$, $x \in [t_i,t_{i+1}]$, then
	\begin{equation} \label{64}
		G_{\gamma,h}(x) = F_{\gamma,h,i}(x)(1 - \psi_i(x)) + F_{\gamma,h,i+1}(x) \psi_i(x),
	\end{equation}
	and
	\begin{equation} \label{239}
		G_{\gamma, h}(x) = F_{\gamma,h,1}(x), \text{ if } x \in [t_0,t_1] \text{ and } G_{\gamma,h}(x) = F_{\gamma,h,j}(x), \text{ if } x \in [t_j,t_{j+1}].
	\end{equation}
	It is equivalent with (\ref{64}), that if $x \in [t_i,t_{i+1}]$, $i=1,\ldots,j-1$, then
	\begin{equation} \label{216}
		G_{\gamma,h}(x) = F_{\gamma,h,i}(x) + \psi_i(x) (F_{\gamma,h,i+1}(x) - F_{\gamma,h,i}(x)).
	\end{equation}
	We show that $G_{\gamma,h} \in W_{\gamma}^{r,p}(I_{rh, \gamma})$. By (\ref{239}) we get that
	\begin{equation*} %\label{ff001}
		(G_{\gamma, h})_{\gamma}^{(r)}(x) = (F_{\gamma,h,1})^{(r)}_{\gamma}(x), \text{ if } x \in [t_0,t_1]
	\end{equation*}
	and
	\begin{equation*} %\label{ff002}
		(G_{\gamma,h})_{\gamma}^{(r)}(x) = (F_{\gamma,h,j})_{\gamma}^{(r)}(x), \text{ if } x \in [t_j,t_{j+1}],
	\end{equation*}
	and if $x \in [t_i,t_{i+1}]$, $i=1,\ldots,j-1$, then by (\ref{216}) and statement \ref{140}
	\begin{equation} \label{217}
		(G_{\gamma,h})^{(r)}_{\gamma}(x) %= \\
		= (F_{\gamma,h,i})^{(r)}_{\gamma}(x) + \sum_{k=0}^{r} \binom{r}{k} ((F_{\gamma,h,i+1})^{(k)}_{\gamma} - (F_{\gamma,h,i})^{(k)}_{\gamma})(x) (\psi_i)^{(r-k)}_{\gamma}(x).
	\end{equation}
	By statement \ref{25} we have
	\begin{equation*} %\label{261}
		(\psi_i)^{(r-k)}_{\gamma}(x) = \frac{1}{(\gamma(t_{i+1}) - \gamma(y_i))^{r-k}} \psi^{(r-k)} \left( \frac{\gamma(x) - \gamma(y_i)}{\gamma(t_{i+1}) - \gamma(y_i)} \right)
	\end{equation*}
	is a continuous function, and if $(F_{\gamma,h,i})^{(k)}$ and $(\gamma^{-1})^{(k)}(\gamma(x))$ are well defined, then by the statement \ref{17} and by the Fa\`{a} di Bruno formula for all $k=1,\ldots,r$ we get
	\begin{equation*} %\label{219aa}
		(F_{\gamma,h,i})^{(k)}_{\gamma}(x) %= (F_{\gamma,h,i})^{(r-1)}_{\gamma}(\gamma^{-1}(\gamma(x)))
		= (F_{\gamma,h,i} \circ \gamma^{-1})^{(k)}(\gamma(x)) = \sum_{l=1}^{k} (F_{\gamma,h,i})^{(l)}(x) B_{k,l}(x),
	\end{equation*}
	where
	\begin{equation*} %\label{220aa}
		B_{k,l}(x) = B_{k,l}((\gamma^{-1})'(\gamma(x)), (\gamma^{-1})''(\gamma(x)), \ldots, (\gamma^{-1})^{(k-l+1)}(\gamma(x))) %= \\
%		= \sum \frac{(r-1)!}{j_1! j_2! \cdots j_{r-l}!} \left( \frac{(\gamma^{-1})'(\gamma(x))}{1!} \right)^{j_1} \left( \frac{(\gamma^{-1})''(\gamma(x))}{2!} \right)^{j_2} \cdots \left( \frac{(\gamma^{-1})^{r-l}(\gamma(x))}{(r-l)!} \right)^{j_{r-l}},
	\end{equation*}
	is the corresponding Bell-polynomial, which is a continous function on $[t_0,t_{j+1}] \cap (0,a_N + 1]$ and on $[t_0,t_{j+1}] \cap [a_N + 1,\infty)$ by statement \ref{121}. So from the properties of the Steklov function and the remarks above we get from (\ref{217}) that
	\begin{equation} \label{222d}
		\infty > %\int_{t_0}^{t_{j+1}} |((G_{\gamma,h})^{(r-1)}_{\gamma})'(x)|^p \, \mathrm{d}x %\geq \int_{I_{rh,\gamma}} |((G_{\gamma,h})^{(r-1)}_{\gamma})'(x)|^p \, \mathrm{d}x = \\
		%= \int_{\gamma(t_0)}^{\gamma(t_{j+1})} |((G_{\gamma,h})^{(r-1)}_{\gamma})'(\gamma^{-1}(y))|^p \, \mathrm{d} \gamma^{-1}(y) = \\
		%= \int_{\gamma(t_0)}^{\gamma(t_{j+1})} |\underbrace{((G_{\gamma,h})^{(r-1)}_{\gamma})'(\gamma^{-1}(y)) \cdot (\gamma^{-1})'(y)}_{((G_{\gamma,h})^{(r-1)}_{\gamma} \circ \gamma^{-1})'(y)}|^p ((\gamma^{-1})'(y))^{-p} \, \mathrm{d} \gamma^{-1}(y) = \\
		%= \int_{\gamma(t_0)}^{\gamma(t_{j+1})} \left| (G_{\gamma,h} \circ \gamma^{-1})^{(r)}(y) \right|^p (\gamma^{-1})'(y)^{-p} \, \mathrm{d} \gamma^{-1}(y) = \\
		%= \int_{\gamma(t_0)}^{\gamma(t_{j+1})} \left| (G_{\gamma,h})^{(r)}_{\gamma} (\gamma^{-1}(y)) \right|^p (\gamma^{-1})'(y)^{-p} \, \mathrm{d} \gamma^{-1}(y) \\
		%= \int_{t_0}^{t_{j+1}} \left| (G_{\gamma,h})^{(r)}_{\gamma} (x) \right|^p ((\gamma^{-1})'(\gamma(x)))^{-p} \, \mathrm{d}x \geq C^p 
		\int_{t_0}^{t_{j+1}} |(G_{\gamma,h})^{(r)}_{\gamma}(x)|^p \, \mathrm{d} x.
	\end{equation}
	However, if (\ref{222d}) is true, then $G_{\gamma,h} \in W_{\gamma}^{r,p}(I_{rh, \gamma})$ (even $G_{\gamma,h} \in W_{\gamma}^{r,p}([t_0,t_{j+1}])$) holds.
	
	\end{lepes}
	
	\begin{lepes} \label{574}
	We will show that if $0 < h \leq t \leq T$, $1 \leq p \leq \infty$, then
	\begin{equation} \label{62}
		\| (f - G_{\gamma,h}) w_{\alpha} \|_{L^p(I_{rh, \gamma})} \leq C \cdot \Omega_{\gamma,\varphi}^{r}(f,t)_{w_{\alpha},p}
	\end{equation}
	holds, where this last expression we define with the help of interval
	\begin{equation*} %\label{456}
		I_{rh,\gamma}' = \left[4A_1r^2 (\gamma^{-1}(h))^2, \frac{\frac{1 + 2 \sqrt{A_1}}{2 \sqrt{A_1}} A_2}{(\gamma^{-1}(h))^{2}} \right].
	\end{equation*}
	At first let $1 \leq p < \infty$, then
	\begin{equation} \label{63}
		\| (f - G_{\gamma,h}) w_{\alpha} \|_{L^p(I_{rh, \gamma})}^{p} %= \int_{4A_1r^2 (\gamma^{-1}(h))^2}^{\frac{A_2}{(\gamma^{-1}(h))^2}} |f(x) - G_{\gamma,h}(x)|^p w_{\alpha}^{p}(x) \, \mathrm{d} x \leq \\
		%\leq \int_{4A_1r^2 (\gamma^{-1}(h))^2}^{\frac{A_2A}{(\gamma^{-1}(h))^2}} |f(x) - G_{\gamma,h}(x)|^p w_{\alpha}^{p}(x) \, \mathrm{d} x =
		\leq \sum_{i=0}^{j} \int_{t_i}^{t_{i+1}} |f(x) - G_{\gamma,h}(x)|^p w_{\alpha}^p(x) \, \mathrm{d} x.
	\end{equation}
%	\begin{multline} \label{63}
%		\| (f - G_{\gamma,h}) w_{\alpha} \|_{L^p(I_{rh, \gamma})}^{p} = \int_{4A_1r^2 (\gamma^{-1}(h))^2}^{\frac{A_2}{(\gamma^{-1}(h))^2}} |f(x) - G_{\gamma,h}(x)|^p w_{\alpha}^{p}(x) \, \mathrm{d} x \leq \\
%		\leq \int_{4A_1r^2 (\gamma^{-1}(h))^2}^{\frac{A_2A}{(\gamma^{-1}(h))^2}} |f(x) - G_{\gamma,h}(x)|^p w_{\alpha}^{p}(x) \, \mathrm{d} x = \sum_{i=0}^{j} \int_{t_i}^{t_{i+1}} |f(x) - G_{\gamma,h}(x)|^p w_{\alpha}^p(x) \, \mathrm{d} x.
%	\end{multline}
		
	So we get the following estimates: if $x \in [t_i,t_{i+1}]$, $i=1,\ldots,j-1$, then
	\begin{multline} \label{65}
		|f(x) - G_{\gamma,h}(x)| = |(f(x) - F_{\gamma,h,i}(x))(1-\psi_i(x)) + (f(x)-F_{\gamma,h,i+1}(x)) \psi_i(x)| \\
		\leq |f(x) - F_{\gamma,h,i}(x)| + |f(x) - F_{\gamma,h,i+1}(x)|,
	\end{multline}
	if $x \in [t_0,t_1]$, then
	\begin{equation} \label{66}
		|f(x) - G_{\gamma,h}(x)| = |f(x) - F_{\gamma,h,1}(x)|,
	\end{equation}
	finally if $x \in [t_j,t_{j+1}]$, then
	\begin{equation} \label{67}
		|f(x) - G_{\gamma,h}(x)| = |f(x) - F_{\gamma,h,j}(x)|.
	\end{equation}
	By (\ref{65}), (\ref{66}), (\ref{67}) and the Jensen inequality we get from (\ref{63}), that
%	\begin{multline} \label{68}
%			\| (f - G_{\gamma,h}) w_{\alpha} \|_{L^p(I_{rh, \gamma})}^{p} \leq \int_{t_0}^{t_1} |f(x) - F_{\gamma,h,1}(x)|^p w_{\alpha}^p(x) \,  \mathrm{d} x + \\
%			+ \sum_{i=1}^{j-1} \int_{t_i}^{t_{i+1}} |f(x) - G_{\gamma,h}(x)|^p w_{\alpha}^p(x) \, \mathrm{d} x +  \int_{t_j}^{t_{j+1}} |f(x) - F_{\gamma,h,j}(x)|^p w_{\alpha}^p(x) \, \mathrm{d} x = *
%	\end{multline}
%	Felhasználva, hogy a Jensen-egyenlőtlenség szerint $(a+b)^p \leq 2^{p-1}(a^p + b^p)$, ha $p \geq 1$:
%	\begin{multline} \label{69}
%		* \leq \int_{t_0}^{t_1} |f(x) - F_{\gamma,h,1}(x)|^p w_{\alpha}^p(x) \, \mathrm{d} x + 2^{p-1} \sum_{i=1}^{j-1} \int_{t_i}^{t_{i+1}} |f(x) - F_{\gamma,h,i}(x)|^p w_{\alpha}^p(x) \, \mathrm{d}x + \\
%		+ 2^{p-1} \sum_{i=1}^{j-1} \int_{t_i}^{t_{i+1}} |f(x) - F_{\gamma,h,i+1}(x)|^p w_{\alpha}^p(x) \, \mathrm{d} x + \int_{t_j}^{t_{j+1}} |f(x) - F_{\gamma,h,j}(x)|^p w_{\alpha}^p(x) \mathrm{d} x = \\
%		\leq 2^{p-1} \sum_{i=1}^{j} \int_{t_{i-1}}^{t_{i+1}} |f(x) - F_{\gamma,h,i}(x)|^p w_{\alpha}^p \, \mathrm{d} x.
%	\end{multline}
	\begin{equation} \label{69}
		\| (f - G_{\gamma,h}) w_{\alpha} \|_{L^p(I_{rh, \gamma})}^{p} \leq 2^{p-1} \sum_{i=1}^{j} \int_{t_{i-1}}^{t_{i+1}} |f(x) - F_{\gamma,h,i}(x)|^p w_{\alpha}^p \, \mathrm{d} x.
	\end{equation}
	
	Therefore it is enough to estimate the
	\begin{equation} \label{70}
		I_i := \int_{t_{i-1}}^{t_{i+1}} |f(x) - F_{\gamma,h,i}(x)|^p w_{\alpha}^p(x) \, \mathrm{d} x
	\end{equation}
	integrals, where $i=1,\ldots,j$. Let us estimate the integrand: if $t_{i-1} \leq x \leq t_{i+1}$, then
	\begin{equation} \label{71}
		|f(x) - F_{\gamma,h,i}(x)| %= \left| f(x) - \frac{2a}{h} \int_{\frac{h}{2a}}^{\frac{h}{a}} f_{\gamma,\tau,\varphi(t_{i-1})}(x) \, \mathrm{d} \tau \right| \leq \\ %= \\
%		= \left| \frac{2a}{h} \int_{\frac{h}{2a}}^{\frac{h}{a}} r^r \int_{0}^{\frac{1}{r}} \cdots \int_{0}^{\frac{1}{r}} \left[ f(x) - \sum_{l=1}^{r} (-1)^{l+1} {r \choose l} f \left( x + l \gamma^{-1}(\tau) \sqrt{t_{i-1}} (u_1 + \ldots + u_r) \right) \right] \, \mathrm{d}u_1 \ldots \mathrm{d}u_r \mathrm{d} \tau \right| \leq \\
%		\leq \frac{2a}{h} \int_{\frac{h}{2a}}^{\frac{h}{a}} r^r \int_{0}^{\frac{1}{r}} \cdots \int_{0}^{\frac{1}{r}} \left| (-1)^r \sum_{l=0}^{r} (-1)^{r-l} {r \choose l} f \left( x + l \gamma^{-1}(\tau) (u_1 + \ldots + u_r) \varphi(t_{i-1}) \right) \right| \, \mathrm{d}u_1 \ldots \mathrm{d}u_r \mathrm{d} \tau = \\
		\leq \frac{2a}{h} \int_{\frac{h}{2a}}^{\frac{h}{a}} r^r \int_{0}^{\frac{1}{r}} \cdots \int_{0}^{\frac{1}{r}} \left| \overrightarrow{\Delta}_{\gamma^{-1}(\tau) (u_1 + \ldots + u_r) \varphi(t_{i-1})}^{r} f(x) \right| \, \mathrm{d}u_1 \ldots \mathrm{d}u_r \mathrm{d} \tau.
	\end{equation}
	Let $u := u_1 + \ldots + u_r$, then $0 < u < 1$. Since
	\begin{multline*} %\label{234}
		\left\{ (u_1,\ldots,u_r) \in \mathds{R}^r \mid 0 < u_i < \textstyle \frac{1}{r} \displaystyle , i=1,\ldots,r \right\} \\
		\subset \{ (u_1,\ldots,u_r) \in \mathds{R}^r \mid 0 < u_1 + \ldots + u_r < 1; \ 0 < u_i, i=1,\ldots,r \} =: H,
	\end{multline*}
	and $B_r := \lambda(H) < \infty$,	
%	By the lemma \ref{232}, 
	with the notation $B(r) := r^r B_r$ we can continue the estimation of (\ref{71}):
	\begin{equation} \label{72}
		|f(x) - F_{\gamma,h,i}(x)| \leq \frac{2a}{h} B(r) \int_{\frac{h}{2a}}^{\frac{h}{a}} \int_{0}^{1} \left| \overrightarrow{\Delta}_{\gamma^{-1}(\tau) u \varphi(t_{i-1})}^{r} f(x) \right| \, \mathrm{d}u \mathrm{d}\tau.
	\end{equation}
%	\begin{multline} \label{72}
%		\leq \frac{2a}{h} \int_{\frac{h}{2a}}^{\frac{h}{a}} r^r B_r \int_{0}^{1} \left| \overrightarrow{\Delta}_{\gamma^{-1}(\tau) u \varphi(t_{i-1})}^{r} f(x) \right| \, \mathrm{d}u \mathrm{d}\tau = \\
%		= \frac{2a}{h} B(r) \int_{\frac{h}{2a}}^{\frac{h}{a}} \int_{0}^{1} \left| \overrightarrow{\Delta}_{\gamma^{-1}(\tau) u \varphi(t_{i-1})}^{r} f(x) \right| \, \mathrm{d}u \mathrm{d}\tau.
%	\end{multline}
	After applying the Hölder inequality twice, by (\ref{72}):
	\begin{equation*} %\label{73}
		|f(x) - F_{\gamma,h,i}(x)|^p \leq \frac{2a}{h} B^p(r) \int_{\frac{h}{2a}}^{\frac{h}{a}} \int_{0}^{1} \left| \overrightarrow{\Delta}_{\gamma^{-1}(\tau) u \varphi(t_{i-1})}^{r} f(x) \right|^p \, \mathrm{d}u \mathrm{d}\tau,
	\end{equation*}
%	\begin{multline} \label{73}
%		|f(x) - F_{\gamma,h,i}(x)|^p \leq \left( \frac{2a}{h} B(r) \int_{\frac{h}{2a}}^{\frac{h}{a}} \int_{0}^{1} \left| \overrightarrow{\Delta}_{\gamma^{-1}(\tau) u \varphi(t_{i-1})}^{r} f(x) \right| \, \mathrm{d}u \mathrm{d}\tau \right)^p \leq \\
%		\leq \left( \frac{2a}{h} \right)^p B^p(r) \left(  \left( \int_{\frac{h}{2a}}^{\frac{h}{a}} \left( \int_{0}^{1} \left| \overrightarrow{\Delta}_{\gamma^{-1}(\tau) u \varphi(t_{i-1})}^{r} f(x) \right| \, \mathrm{d}u \right)^p \, \mathrm{d} \tau \right)^{\frac{1}{p}} \left( \int_{\frac{h}{2a}}^{\frac{h}{a}} 1^{\frac{p}{p-1}} \, \mathrm{d} \tau \right)^{\frac{p-1}{p}} \right)^{p} = \\
%		= \frac{2a}{h} B^p(r) \int_{\frac{h}{2a}}^{\frac{h}{a}} \left( \int_{0}^{1} \left| \overrightarrow{\Delta}_{\gamma^{-1}(\tau) u \varphi(t_{i-1})}^{r} f(x) \right| \, \mathrm{d}u \right)^p \, \mathrm{d} \tau \leq \\
%		\leq \frac{2a}{h} B^p(r) \int_{\frac{h}{2a}}^{\frac{h}{a}} \left( \left( \int_{0}^{1} \left| \overrightarrow{\Delta}_{\gamma^{-1}(\tau) u \varphi(t_{i-1})}^{r} f(x) \right|^p \, \mathrm{d}u \right)^{\frac{1}{p}} \left( \int_{0}^{1} 1^{\frac{p}{p-1}} \, \mathrm{d}u \right)^{\frac{p-1}{p}} \right)^{p} \, \mathrm{d}\tau = \\
%		= \frac{2a}{h} B^p(r) \int_{\frac{h}{2a}}^{\frac{h}{a}} \int_{0}^{1} \left| \overrightarrow{\Delta}_{\gamma^{-1}(\tau) u \varphi(t_{i-1})}^{r} f(x) \right|^p \, \mathrm{d}u \mathrm{d}\tau,
%	\end{multline}
	so
	\begin{equation} \label{74}
		I_i \leq B^p(r) \int_{t_{i-1}}^{t_{i+1}} \frac{2a}{h} \int_{\frac{h}{2a}}^{\frac{h}{a}} \int_{0}^{1} \left| \overrightarrow{\Delta}_{\gamma^{-1}(\tau) u \varphi(t_{i-1})}^{r} f(x) \right|^p w_{\alpha}^p(x) \, \mathrm{d}u \mathrm{d}\tau \, \mathrm{d} x.
	\end{equation}
	%Similarly proceeding %to lemma \ref{232}
	%as above, 
	Let $\gamma(\gamma^{-1}(\tau) u) =: v$, then
%	Hasonlóképpen eljárva, mint \aref{232}. lemmában, legyen $\gamma(\gamma^{-1}(\tau) u) =: v$, ekkor
	\begin{equation*} %\label{133}
		\textstyle \gamma \left( \gamma^{-1} \left( \frac{h}{2a} \right) u  \right) \leq v \leq \gamma \left(\gamma^{-1} \left( \frac{h}{a} \right) u \right),
	\end{equation*}
	beacuse $\gamma$, $\gamma^{-1}$ are monotone increasing functions. Since $0 < u < 1$ we get that $0 < v < \frac{h}{a}$, and if
	\begin{equation*} %\label{237}
		H := \left\{ (u,\tau) \in \mathds{R}^2 \mid 0 < u < 1, \ \textstyle \frac{h}{2a} \leq \tau \leq \frac{h}{a} \displaystyle \right\},
	\end{equation*}
	then $\lambda(H) = \frac{h}{2a} < h < 1$, so by (\ref{74}):
%	\begin{multline} \label{237}
%		H := \left\{ (u,\tau) \in \mathds{R}^2 \mid 0 < u < 1, \ \frac{h}{2a} \leq \tau \leq \frac{h}{a} \right\} = \\
%		= \bigcup_{0 < v < \frac{h}{a}} \underbrace{\left\{ (u,\tau) \in \mathds{R}^2 \mid 0 < u < 1, \ \frac{h}{2a} \leq \tau \leq \frac{h}{a}, \ 0 < v < \frac{h}{a} \right\}}_{=: H_v}
%	\end{multline}
%	is a disjunct union of $H_v$-s, and because of $\lambda(H) = \frac{h}{2a} < h < 1$, then by (\ref{74}) we get
	\begin{multline} \label{75}
		I_i %\leq \frac{2a}{h} B^p(r) \int_{t_{i-1}}^{t_{i+1}} \int_{0}^{\frac{h}{a}} \int_{H_v} \left| \overrightarrow{\Delta}_{\gamma^{-1}(v) \varphi(t_{i-1})}^{r} f(x) \right|^p w_{\alpha}^p(x) \, \mathrm{d}\lambda \mathrm{d}v \mathrm{d} x = \\
		%= \frac{2a}{h} B^p(r) \int_{t_{i-1}}^{t_{i+1}} \int_{0}^{\frac{h}{a}} \left| \overrightarrow{\Delta}_{\gamma^{-1}(v) \varphi(t_{i-1})}^{r} f(x) \right|^p w_{\alpha}^p(x) \int_{H_v} \, \mathrm{d}\lambda \mathrm{d}v \mathrm{d} x = \\
		%= \frac{2a}{h} B^p(r) \int_{t_{i-1}}^{t_{i+1}} \int_{0}^{\frac{h}{a}} \left| \overrightarrow{\Delta}_{\gamma^{-1}(v) \varphi(t_{i-1})}^{r} f(x) \right|^p w_{\alpha}^p(x) \lambda(H_v) \mathrm{d}v \mathrm{d} x \leq \\
		\leq \frac{2a}{h} B^p(r) \lambda(H) \int_{t_{i-1}}^{t_{i+1}} \int_{0}^{\frac{h}{a}} \left| \overrightarrow{\Delta}_{\gamma^{-1}(v) \varphi(t_{i-1})}^{r} f(x) \right|^p w_{\alpha}^p(x) \, \mathrm{d}v \mathrm{d} x \\
		\leq \frac{2a}{h} B^p(r) \int_{t_{i-1}}^{t_{i+1}} \int_{0}^{\frac{h}{a}} \left| \overrightarrow{\Delta}_{\gamma^{-1}(v) \varphi(t_{i-1})}^{r} f(x) \right|^p w_{\alpha}^p(x) \, \mathrm{d}v \mathrm{d} x.
	\end{multline}
	Since $t_{i-1} \leq x \leq t_{i+1}$, then by (\ref{238})
	\begin{equation} \label{77}
		1 \leq \frac{x}{t_{i-1}} \leq \frac{t_{i+1}}{t_{i-1}} = \frac{t_{i+1}}{t_i} \cdot \frac{t_{i}}{t_{i-1}} \leq \left( \frac{1 + 2 \sqrt{A_1}}{2 \sqrt{A_1}} \right)^2,
	\end{equation}
	so $x \sim t_{i-1}$. In the inner integral let us change the variables: $\gamma^{-1}(v) \sqrt{t_{i-1}} = \gamma^{-1}(\nu) \sqrt{x}$:
	%\begin{equation} \label{134}
	%	\frac{\mathrm{d} v}{\mathrm{d} \nu} = \gamma' \left( \gamma^{-1} (\nu) \sqrt{\frac{x}{t_{i-1}}} \right) \cdot \sqrt{\frac{x}{t_{i-1}}} \cdot (\gamma^{-1})'(\nu),
	%\end{equation}
	%és ez értelmes, mivel $\gamma$ és $\gamma^{-1}$ monoton növekedése miatt
	%\begin{multline} \label{135}
	%	0 = \gamma(0) \leq \underbrace{\gamma \left( \gamma^{-1}(v) \sqrt{\frac{t_{i-1}}{x}} \right)}_{\nu} \leq \gamma \left( \gamma^{-1} \left( \frac{h}{a} \right) \cdot 1 \right) = \frac{h}{a} < h < \\
	%	< \gamma \left( \frac{a_1 \sqrt{A_1}}{1 + 2 \sqrt{A_1}} \right) < \gamma \left( \frac{a_1}{2} \right),
	%\end{multline}
	%($ \frac{\sqrt{A_1}}{1 + 2 \sqrt{A_1}} < \frac{1}{2}$ mindig igaz), vagyis $(\gamma^{-1})'(\nu) \neq 0$, illetve
	%\begin{equation} \label{136}
	%	0 \leq \gamma^{-1} (\nu) \sqrt{\frac{x}{t_{i-1}}} \leq \gamma^{-1}(h) \frac{1 + 2 \sqrt{A_1}}{2 \sqrt{A_1}} \leq \gamma^{-1} \left( \gamma \left( \frac{a_1 \sqrt{A_1}}{1 + 2 \sqrt{A_1}} \right) \right) \cdot \frac{1 + 2 \sqrt{A_1}}{2 \sqrt{A_1}} = \frac{a_1}{2},
	%\end{equation}
	%és $\gamma'$ a $\left[0,\frac{a_1}{2}\right]$ intervallumon nem végtelen. Tehát végrehajtva a változócserét:
	\begin{multline} \label{76}
		\int_{0}^{\frac{h}{a}} \left| \overrightarrow{\Delta}_{\gamma^{-1}(v) \varphi(t_{i-1})}^{r} f(x) \right|^p w_{\alpha}^p(x) \, \mathrm{d}v \\
		%= \int_{0}^{\frac{h}{a}} \left| \sum_{l=0}^{r} (-1)^{r-l} {r \choose l} f \left( x + l \gamma^{-1}(v) \sqrt{t_{i-1}} \right) \right|^p w_{\alpha}^p(x) \, \mathrm{d}v = \\
		= \sqrt{\frac{x}{t_{i-1}}} \int_{0}^{\gamma \left( \gamma^{-1} \left( \frac{h}{a} \right) \sqrt{\frac{t_{i-1}}{x}} \right)} \left| \overrightarrow{\Delta}_{\gamma,\varphi,\nu}^{r} f(x) \right|^p w_{\alpha}^p(x) \gamma' \left( \gamma^{-1} (\nu) \sqrt{\frac{x}{t_{i-1}}} \right) \cdot (\gamma^{-1})'(\nu) \, \mathrm{d} \nu.
	\end{multline}
	We can do this, because $\gamma'$ and $(\gamma^{-1})'$ is bounded on $\left[0,\frac{a_1}{2}\right]$, and the domain of the integration is a subset of this interval.
%	Mivel $\gamma'$ monoton nő $(0,a_1)$-en, az integrálási tartomány pedig része ennek az in\-ter\-val\-lum\-nak:
%	\begin{equation} \label{137a}
%		\gamma' \left( \gamma^{-1} (\nu) \sqrt{\frac{x}{t_{i-1}}} \right) \leq \gamma' \left( \frac{a_1}{2} \right) %= \sum_{k=1}^{N} \beta_k \left( a_k - \frac{a_1}{2} \right)^{\beta_k - 1},
%	\end{equation}
%	and
%	\begin{equation} \label{138}
%		(\gamma^{-1})'(\nu) = \frac{1}{\gamma'(\gamma^{-1}(\nu))} \leq \frac{1}{\gamma'(0)}, %= \frac{1}{\sum_{k=1}^{N} \beta_k a_k^{\beta_k - 1}}.
%	\end{equation}
%Legyen
%	\begin{equation} \label{139}
%		C_1 := \frac{1 + 2 \sqrt{A_1}}{2 \sqrt{A_1}} \cdot \frac{\sum_{k=1}^{N} \beta_k \left( a_k - \frac{a_1}{2} \right)^{\beta_k - 1}}{\sum_{k=1}^{N} \beta_k a_k^{\beta_k - 1}},
%	\end{equation}
	So there is a constant $C$ such that by (\ref{76})
	\begin{equation*} %\label{78}
		\int_{0}^{\frac{h}{a}} \left| \overrightarrow{\Delta}_{\gamma^{-1}(v) \varphi(t_{i-1})}^{r} f(x) \right|^p w_{\alpha}^p(x) \, \mathrm{d}v \leq C \int_{0}^{\frac{h}{a}} \left| \overrightarrow{\Delta}_{\gamma,\varphi,\nu}^{r} f(x) \right|^p w_{\alpha}^p(x) \, \mathrm{d} \nu,
	\end{equation*}
	then by (\ref{75}):
	\begin{equation} \label{79}
		I_i \leq C^p \frac{a}{h} \int_{0}^{\frac{h}{a}}  \int_{t_{i-1}}^{t_{i+1}} \left| \overrightarrow{\Delta}_{\gamma,\varphi,\nu}^{r} f(x) \right|^p w_{\alpha}^p(x) \, \mathrm{d} x \mathrm{d} \nu,
	\end{equation}
%	\begin{multline} \label{79}
%		I_i \leq 2 B^p(r) C_1 \int_{t_{i-1}}^{t_{i+1}} \frac{a}{h} \int_{0}^{\frac{h}{a}} \left| \overrightarrow{\Delta}_{\gamma,\varphi,\nu}^{r} f(x) \right|^p w_{\alpha}^p(x) \, \mathrm{d} \nu \mathrm{d} x \leq \\
%		\leq C^p \frac{a}{h} \int_{0}^{\frac{h}{a}}  \int_{t_{i-1}}^{t_{i+1}} \left| \overrightarrow{\Delta}_{\gamma,\varphi,\nu}^{r} f(x) \right|^p w_{\alpha}^p(x) \, \mathrm{d} x \mathrm{d} \nu,
%	\end{multline}
	we used the Fubini theorem. So we get by (\ref{69}) and (\ref{79}), that
	\begin{multline*} %\label{80}
		\| (f - G_{\gamma,h}) w_{\alpha} \|_{L^p(I_{rh, \gamma},\lambda_{\gamma})}^{p} %\leq 2^{p-1} \sum_{i=1}^{j} C^p \frac{a}{h} \int_{0}^{\frac{h}{a}} \int_{t_{i-1}}^{t_{i+1}} \left| \overrightarrow{\Delta}_{\gamma,\varphi,\nu}^{r} f(x) \right|^p w_{\alpha}^p(x) \, \mathrm{d} x \mathrm{d} \nu = \\
		\leq C^p \frac{a}{h} \int_{0}^{\frac{h}{a}} \int_{4A_1r^2 (\gamma^{-1}(h))^2}^{\frac{A_2A}{(\gamma^{-1}(h))^2}}  \left| \overrightarrow{\Delta}_{\gamma,\varphi,\nu}^{r} f(x) \right|^p w_{\alpha}^p(x) \, \mathrm{d} x \, \mathrm{d} \nu \\
		\leq C^p \frac{a}{h} \int_{0}^{\frac{h}{a}} \left\| w_{\alpha} \overrightarrow{\Delta}_{\gamma,\varphi,\nu}^{r} f \right\|_{L^p \left( I_{rh,\gamma}' \right)}^p \, \mathrm{d} \nu \leq %\\
	%	\leq C^p \frac{a}{h} \int_{0}^{\frac{h}{a}} \left\| w_{\alpha} \overrightarrow{\Delta}_{\gamma,\varphi,\nu} f \right\|_{L^p(I_{r \nu,\gamma}')}^p \, \mathrm{d} \nu \leq \\
	%	\leq C^p \frac{a}{h} \int_{0}^{\frac{h}{a}} \sup_{0 < \nu \leq t} \left\| w_{\alpha} \overrightarrow{\Delta}_{\gamma,\varphi,\nu} f \right\|_{L^p(I_{r \nu,\gamma}')}^p \, \mathrm{d} \nu \leq 
	C^p \Omega_{\gamma,\varphi}^{r}(f,t)_{w_\alpha,p}^{p},
	\end{multline*}
	so (\ref{62}) is true. Since the constant $C$ is independent of $p$, the statement is proved for $p=\infty$ too.
\end{lepes}

\begin{lepes} \label{575}
	Let $1 \leq p < \infty$ again. We show that if $0 < h \leq t \leq T$, then
	%Legyen most ismét $1 \leq p < \infty$. Megmutatjuk, hogy minden $0 < h \leq t \leq T$ esetén
	\begin{equation} \label{144}
		h^r \| (G_{\gamma,h})^{(r)}_{\gamma} \varphi^r w_{\alpha} \|_{L^p(I_{rh, \gamma})} \leq C \sum_{n=1}^{r} t^{r-n} \Omega_{\gamma,\varphi}^{n} (f,t)_{w_{\alpha},p},
	\end{equation}
%	ha $r \geq 2$, és
%	\begin{equation} \label{449}
%		h^r \| (G_{\gamma,h})^{(r)}_{\gamma} \varphi^r w_{\alpha} \|_{L^p(I_{rh, \gamma})} \leq \mathcal{C} \Omega_{\gamma, \varphi}^{r} (f,t)_{w_{\alpha},p},
%	\end{equation}
%	ha $r = 1$,
	holds, where this latter expressions we define with the help of $I_{rh,\gamma}'$ again. Since
	\begin{equation} \label{147}
		h^{rp} \| (G_{\gamma,h})^{(r)}_{\gamma} \varphi^r w_{\alpha} \|_{L^p(I_{rh, \gamma})}^p \leq h^{rp} \sum_{i=0}^{j} \| (G_{\gamma,h})^{(r)}_{\gamma} \varphi^r w_{\alpha} \|_{L^p([t_i,t_{i+1}])}^{p},
	\end{equation}
	it is sufficient to estimate the integral only on the $[t_i,t_{i+1}]$, $i=0,\ldots,j$ intervals. By (\ref{239}) on the first and on the last interval of this type:
	\begin{equation} \label{164}
		h^{rp} \| (G_{\gamma,h})^{(r)}_{\gamma} \varphi^r w_{\alpha} \|_{L^p([t_0,t_{1}])}^{p} = h^{rp} \| (F_{\gamma,h,1})^{(r)}_{\gamma} \varphi^r w_{\alpha} \|_{L^p([t_0,t_{1}])}^{p},
	\end{equation}
	and
	\begin{equation} \label{165}
	h^{rp} \| (G_{\gamma,h})^{(r)}_{\gamma} \varphi^r w_{\alpha} \|_{L^p([t_j,t_{j+1}])}^{p} = h^{rp} \| (F_{\gamma,h,j})^{(r)}_{\gamma} \varphi^r w_{\alpha} \|_{L^p([t_j,t_{j+1}])}^{p}.
	\end{equation}
	If $x \in [t_i,t_{i+1}]$, $i=1,\ldots,j-1$, then by statement \ref{140} and by (\ref{216}), with the help of the Jensen inequality we get
	%\begin{equation} \label{146}
	%	(G_{\gamma,h})^{(r)}_{\gamma}(x) = (F_{\gamma,h,i})^{(r)}_{\gamma}(x) + \sum_{k=0}^{r} {r \choose k} (F_{\gamma,h,i+1} - F_{\gamma,h,i})^{(k)}_{\gamma}(x) (\psi_i)^{(r-k)}_{\gamma}(x).
	%\end{equation}
	%With the Jensen inequality we get
	\begin{multline} \label{148}
		h^{rp} \| (G_{\gamma,h})^{(r)}_{\gamma} \varphi^r w_{\alpha} \|_{L^p([t_i,t_{i+1}])}^{p} \leq h^{rp} (r+2)^{p-1} \| (F_{\gamma,h,i})^{(r)}_{\gamma} \varphi^{r} w_{\alpha} \|_{L^p([t_i,t_{i+1}])}^{p} \\
		+ (r+2)^{p-1} \sum_{k=0}^{r} \binom{r}{k} \underbrace{h^{rp} \| (F_{\gamma,h,i+1} - F_{\gamma,h,i})^{(k)}_{\gamma} (\psi_i)^{(r-k)}_{\gamma} \varphi^{r} w_{\alpha} \|_{L^p([t_i,t_{i+1}])}^{p}}_{D_{k,i}}.
	\end{multline}
	Let us estimate in this latter case $D_{k,i}$-s. If $x \in [t_i,t_{i+1}]$, then by lemma \ref{37} $w_{\alpha}(x) \sim w_{\alpha}(t_i)$, on the other hand by (\ref{238}) 
%	\begin{equation} \label{166}
%		1 \leq \frac{t_{i+1}}{t_i} \leq r \gamma^{-1}(h) \frac{1}{2r \gamma^{-1}(h)} + 1 = \frac{1}{2} + 1 = \frac{3}{2},
%	\end{equation}
%	$1 \leq \frac{t_{i+1}}{t_i} \leq \frac{1 + 2 \sqrt{A_1}}{2 \sqrt{A_1}}$, vagyis
	$t_i \sim t_{i+1}$, and $\varphi^r(x) \sim \varphi^r(t_i)$ are also true. Thus
	%\begin{equation} \label{246}
	%	\varphi(t_i) \leq \varphi(x) \leq \varphi(t_{i+1}) \leq \sqrt{\frac{1 + 2 \sqrt{A_1}}{2 \sqrt{A_1}}} \varphi(t_i),
	%\end{equation} és hasonló igaz $\varphi^{r}$ függvényre is. Tehát
%	there exists $C_2=C_2(\alpha,r)$, $c_2 = c_2(\alpha,r)$ constants respectively such that
	\begin{equation} \label{167}
		w_{\alpha}(t_i) \varphi^{r}(t_i) \sim w_{\alpha}(x) \varphi^{r}(x).
	\end{equation}
%	so
%	\begin{equation} \label{168}
%		D_{k,i} \leq h^{rp} C_2^p w_{\alpha}^p(t_i) \varphi^{rp}(t_i) \| (F_{\gamma,h,i+1} - F_{\gamma,h,i})^{(k)}_{\gamma} (\psi_i)^{(r-k)}_{\gamma} \|_{L^p([t_i,t_{i+1}])}^{p}.
%	\end{equation}
	%Tegyük fel, hogy $i \leq M - 1$, vagyis $t_{i+1} \leq t_M < a_N + 1$.
	By statement \ref{25} taking into account (\ref{260}) we get
%	\begin{equation} \label{263}
%		(\psi_i)^{(r-k)}_{\gamma}(x) = \frac{1}{(\gamma(t_{i+1}) - \gamma(y_i))^{r-k}} \psi^{(r-k)} \left( \frac{\gamma(x) - \gamma(y_i)}{\gamma(t_{i+1}) - \gamma(t_i)} \right),
%	\end{equation}
%	így figyelembe véve \told(\ref{260})+at:
	\begin{equation} \label{264}
		|(\psi_i)^{(r-k)}_{\gamma}(x)| \leq \underbrace{\sup_{0 \leq \nu \leq r} \| \psi^{(\nu)} \|_{\infty}}_{=: S} \frac{2^{r-k}}{(\gamma(t_{i+1}) - \gamma(t_i))^{r-k}},
	\end{equation}
	so we get the following as the estimation of $D_{k,i}$-s by (\ref{167}): %ha $k = r$, akkor
	%\begin{equation} \label{195}
	%	D_r \leq h^{rp} C_2^p w_{\alpha}^p(t_i) \varphi^{rp}(t_i) \| (F_{\gamma,h,i+1} - F_{\gamma,h,i})^{(r)}_{\gamma} \|_{L^p([t_i,t_{i+1}], \lambda_{\gamma})}^{p}.
	%\end{equation}
	%Ha pedig $0 \leq k \leq r-1$, akkor (\ref{200}) szerint, mivel $h < \gamma \left( \frac{a_1}{3} \right) < \gamma(a_1)$, ezért (\ref{168}), (\ref{169}) és (\ref{189}) alapján
	\begin{equation} \label{265}
		D_{k,i} %\leq C_2^p S^p 2^{p(r-k)} h^{rp} \frac{w_{\alpha}^{p}(t_i) \varphi^{rp}(t_i)}{(\gamma(t_{i+1}) - \gamma(t_i))^{(r-k)p}} \| (F_{\gamma,h,i+1} - F_{\gamma,h,i})^{(k)}_{\gamma} \|_{L^p([t_i,t_{i+1}])}^{p} \leq \\
		\leq C^p \frac{h^{rp} w_{\alpha}^{p}(t_i) \varphi^{rp}(t_i)}{(\gamma(t_{i+1}) - \gamma(t_i))^{(r-k)p}} \| (F_{\gamma,h,i+1} - F_{\gamma,h,i})^{(k)}_{\gamma} \|_{L^p([t_i,t_{i+1}])}^{p}.
	\end{equation}
%	\begin{multline} \label{265}
%		D_{k,i} \leq C_2^p S^p 2^{p(r-k)} h^{rp} \frac{w_{\alpha}^{p}(t_i) \varphi^{rp}(t_i)}{(\gamma(t_{i+1}) - \gamma(t_i))^{(r-k)p}} \| (F_{\gamma,h,i+1} - F_{\gamma,h,i})^{(k)}_{\gamma} \|_{L^p([t_i,t_{i+1}])}^{p} \leq \\
%		\leq C^p \frac{h^{rp} w_{\alpha}^{p}(t_i) \varphi^{rp}(t_i)}{(\gamma(t_{i+1}) - \gamma(t_i))^{(r-k)p}} \| (F_{\gamma,h,i+1} - F_{\gamma,h,i})^{(k)}_{\gamma} \|_{L^p([t_i,t_{i+1}])}^{p}.
%	\end{multline}
	Let us suppose first, that $1 \leq i \leq M + 1$, then by (\ref{238}) we have $t_i \sim t_{i+1}$, so there is a constant $\mathcal{C}$ such that for all $1 \leq i \leq M + 1$
%	\begin{equation} \label{311}
%		t_{i+1} \leq t_{M+2} = \frac{t_{M+2}}{t_{M+1}} \cdot \frac{t_{M+1}}{t_{M}} \cdot t_M < \left( \frac{1 + 2 \sqrt{A_1}}{2 \sqrt{A_1}} \right)^2 (a_N + 1),
%	\end{equation}
	$[t_{i},t_{i+1}] \subset \left[ 0, \mathcal{C} a_N \right] =: I$.
%	If $k=0$, then
%	\begin{equation} \label{361}
%		D_{0,i} \leq C^p \frac{h^{rp} w_{\alpha}^{p}(t_i) \varphi^{rp}(t_i)}{(\gamma(t_{i+1}) - \gamma(t_i))^{rp}} \| F_{\gamma,h,i+1} - F_{\gamma,h,i} \|_{L^p([t_i,t_{i+1}])}^{p}.
%	\end{equation}
%	Since $h < \gamma \left( \frac{a_1 \sqrt{A_1}}{1 + 2 \sqrt{A_1}} \right) < \gamma \left( \frac{a_1}{2} \right) < \gamma(a_1)$, then by (\ref{200}):
%	\begin{equation} \label{267}
%		h^{rp} \leq \left( \frac{\gamma(a_1)}{a_1} \right)^{rp} (\gamma^{-1}(h))^{rp}.
%	\end{equation}
%	In addition $w_{\alpha}(t_i) \sim w_{\alpha}(x)$ and $\gamma^{-1}$ is a Lipschitz function on $I$, so
%	\begin{multline} \label{268}
%		D_{0,i} \leq C^p \left( \frac{\gamma(a_1)}{a_1} \right)^{rp} \underbrace{\left( \frac{\gamma^{-1}(h) \sqrt{t_i}}{t_{i+1} - t_i} \right)^{rp}}_{\leq (2r)^{pr}} \underbrace{\left( \frac{t_{i+1} - t_i}{\gamma(t_{i+1}) - \gamma(t_i)} \right)^{rp}}_{\text{bounded}} w_{\alpha}^{p}(t_i) \cdot \\
%		\cdot \| F_{\gamma,h,i+1} - F_{\gamma,h,i} \|_{L^p([t_i,t_{i+1}])}^{p} \leq C^p \| (F_{\gamma,h,i+1} - F_{\gamma,h,i}) w_{\alpha} \|_{L^p([t_i,t_{i+1}])}^{p}.
%	\end{multline}
	If $1 \leq k \leq r$, then by statement \ref{17} and the Fa\`{a} di Bruno formula
	\begin{equation*} %\label{256}
		(F_{\gamma,h,i})^{(k)}_{\gamma}(x) %= (F_{\gamma,h,i})^{(k)}_{\gamma}(\gamma^{-1}(\gamma(x))) 
		= (F_{\gamma,h,i} \circ \gamma^{-1})^{(k)}(\gamma(x)) = \sum_{m=1}^{k} (F_{\gamma,h,i})^{(m)}(x) B_{k,m}(x),
	\end{equation*}	
%	\begin{multline} \label{256}
%		(F_{\gamma,h,i})^{(k)}_{\gamma}(x) = (F_{\gamma,h,i})^{(k)}_{\gamma}(\gamma^{-1}(\gamma(x))) = (F_{\gamma,h,i} \circ \gamma^{-1})^{(k)}(\gamma(x)) = \\
%		= \sum_{m=1}^{k} (F_{\gamma,h,i})^{(m)}(x) B_{k,m}(x),
%	\end{multline}
	where $B_{k,m}(x) := B_{k,m}((\gamma^{-1})'(\gamma(x)), (\gamma^{-1})''(\gamma(x)), \ldots, (\gamma^{-1})^{(k-m+1)}(\gamma(x)))$.
	
	Since $[t_{i},t_{i+1}] \subset I$ and $1 \leq k \leq r$:
	\begin{equation*} %\label{188}
		|B_{k,m}(x)| \leq \sup_{1 \leq m \leq r} \| B_{r,m} \|_{L^{\infty}(I)} \leq C.
	\end{equation*}
%	\begin{multline} \label{196}
%		D_k \leq \\
%		\leq \underbrace{S^pC_3^p (4r)^{p(r-k)} (r-k)^p C_2^p}_{C_4^p} h^{rp} \varphi^{rp}(t_i) \frac{1}{(\gamma^{-1}(h) \sqrt{t_i})^{p(r-k)}} w_{\alpha}^p(t_i) \| (F_{\gamma,h,i+1} - F_{\gamma,h,i})^{(k)}_{\gamma} \|_{L^p([t_i,t_{i+1}], \lambda_{\gamma})}^{p} \leq \\
%		\leq \underbrace{C_4^p \left( \frac{\gamma(a_1)}{a_1} \right)^{rp}}_{C_5^{p}} (\gamma^{-1}(h))^{rp} (\sqrt{t_i})^{rp}  \frac{1}{(\gamma^{-1}(h) \sqrt{t_i})^{p(r-k)}} w_{\alpha}^p(t_i) \| (F_{\gamma,h,i+1} - F_{\gamma,h,i})^{(k)}_{\gamma} \|_{L^p([t_i,t_{i+1}], \lambda_{\gamma})}^{p} \leq \\
%		\leq C_5^p (\gamma^{-1}(h) \sqrt{t_i})^{pk} w_{\alpha}^p(t_i) \| (F_{\gamma,h,i+1} - F_{\gamma,h,i})^{(k)}_{\gamma} \|_{L^p([t_i,t_{i+1}], \lambda_{\gamma})}^{p} \leq
%	\end{multline}
	Using this, by the Jensen inequality again we get
	\begin{equation} \label{362}
		\| (F_{\gamma,h,i+1} - F_{\gamma,h,i})^{(k)}_{\gamma} \|_{L^p([t_i,t_{i+1}])}^{p} %= \int_{t_i}^{t_{i+1}} \left| (F_{\gamma,h,i+1} - F_{\gamma,h,i})^{(k)}_{\gamma}(x) \right|^p \, \mathrm{d} x = \\
%		= \int_{t_i}^{t_{i+1}} \left| \sum_{m=1}^{k} (F_{\gamma,h,i+1} - F_{\gamma,h,i})^{(m)}(x) B_{k,m}(x) \right|^p \, \mathrm{d}x \leq \\
%		\leq k^{p-1} \int_{t_i}^{t_{i+1}} \sum_{m=1}^{k} \left| (F_{\gamma,h,i+1} - F_{\gamma,h,i})^{(m)}(x) \right|^p \left| B_{k,m}(x) \right|^p \, \mathrm{d}x \leq \\
		%\leq C_3^p r^p \sum_{m=1}^{k} \int_{t_i}^{t_{i+1}} \left| (F_{\gamma,h,i+1} - F_{\gamma,h,i})^{(m)}(x) \right|^p \, \mathrm{d}x = \\
		\leq C^p \sum_{m=1}^{k} \left\| (F_{\gamma,h,i+1} - F_{\gamma,h,i})^{(m)} \right\|_{L^p([t_i,t_{i+1}])}^p.
	\end{equation}
	Using lemma \ref{359} and the Jensen inequality, we get from (\ref{362}):
%	\begin{multline} \label{363}
%		\| (F_{\gamma,h,i+1} - F_{\gamma,h,i})^{(m)} \|_{L^p([t_i,t_{i+1}])}^p %\leq \\
%		\leq M(r,m)^p 2^{p-1} \left\| F_{\gamma,h,i+1} - F_{\gamma,h,i} \right\|_{L^p([t_i,t_{i+1}])}^p (t_{i+1} - t_i)^{-pm} + \\
%		+ \leq M(r,m)^p 2^{p-1} \| (F_{\gamma,h,i+1} - F_{\gamma,h,i})^{(r)} \|_{L^p([t_i,t_{i+1}])}^p (t_{i+1} - t_i)^{p(r-m)} \leq \\
%		\leq C_{4}^p (t_{i+1} - t_i)^{-pm} \left\| F_{\gamma,h,i+1} - F_{\gamma,h,i} \right\|_{L^p([t_i,t_{i+1}])}^p + \\
%		+ C_{4}^p (t_{i+1} - t_i)^{p(r-m)} \| (F_{\gamma,h,i+1} - F_{\gamma,h,i})^{(r)} \|_{L^p([t_i,t_{i+1}])}^p,
%	\end{multline}
%	where $C_{4} := 2 \max_{m=1,\ldots,r} M(r,m)$. Visszatérve \told(\ref{362})+hoz, azt kapjuk, hogy
	\begin{multline} \label{364}
		\| (F_{\gamma,h,i+1} - F_{\gamma,h,i})^{(k)}_{\gamma} \|_{L^p([t_i,t_{i+1}])}^{p} \\
		\leq C^p \sum_{m=1}^{k} (t_{i+1} - t_i)^{-pm} \left\| F_{\gamma,h,i+1} - F_{\gamma,h,i} \right\|_{L^p([t_i,t_{i+1}])}^p \\
		+ C^p (t_{i+1} - t_i)^{pr} \sum_{m=1}^{k} (t_{i+1} - t_i)^{-pm} \| (F_{\gamma,h,i+1} - F_{\gamma,h,i})^{(r)} \|_{L^p([t_i,t_{i+1}])}^p.
	\end{multline}
	By (\ref{52})
	\begin{equation*} %\label{365}
		(t_{i+1} - t_i)^{-p} %= \left( \frac{1}{t_{i+1} - t_i} \right)^p 
		\leq \left( \frac{2r}{\gamma^{-1}(h) \sqrt{t_i}} \right)^p,
	\end{equation*}
	furthermore by (\ref{238}), since $h \leq \gamma \left( \sqrt{\frac{2 \sqrt{A_1}}{(1 + 2 \sqrt{A_1})(a_N + 1)}} \right)$:
	\begin{equation*} %\label{578}
		t_i \leq \frac{t_{M+1}}{t_M} \cdot t_M \leq \frac{1 + 2 \sqrt{A_1}}{2 \sqrt{A_1}} (a_N + 1) \leq \frac{1}{(\gamma^{-1}(h))^2},
	\end{equation*}
	so $\sqrt{t_i} \leq \frac{1}{\gamma^{-1}(h)}$, thus
	\begin{equation*} %\label{366}
		\frac{2r}{\gamma^{-1}(h) \sqrt{t_i}} \geq \frac{2r}{\gamma^{-1}(h) \frac{1}{\gamma^{-1}(h)}} = 2r > 1.
	\end{equation*}
	Since the exponential function is strictly increasing, if its base is bigger than $1$:
	\begin{equation*} %\label{367}
		 \sum_{m=1}^{k} (t_{i+1} - t_i)^{-pm} %\leq \sum_{m=1}^{k} \left( \frac{2r}{\gamma^{-1}(h) \sqrt{t_i}} \right)^{pm}
		 \leq \sum_{m=1}^{k} \left( \frac{2r}{\gamma^{-1}(h) \sqrt{t_i}} \right)^{pk} = \frac{k (2r)^{pk}}{(\gamma^{-1}(h) \sqrt{t_i})^{pk}}.
	\end{equation*}
	Let us write this to (\ref{364}), then by (\ref{265}) we get for the estimation of $D_{k,i}$-s:
%	\begin{multline} \label{368}
%		\| (F_{\gamma,h,i+1} - F_{\gamma,h,i})^{(k)}_{\gamma} \|_{L^p([t_i,t_{i+1}])}^{p} \leq \\
%		\leq C^p r (2r)^{pr} \frac{1}{(\gamma^{-1}(h) \sqrt{t_i})^{pk}} \left\| F_{\gamma,h,i+1} - F_{\gamma,h,i} \right\|_{L^p([t_i,t_{i+1}])}^p + \\
%		+ C^p r (2r)^{pr} \frac{(t_{i+1}-t_i)^{rp}}{(\gamma^{-1}(h) \sqrt{t_i})^{pk}} \| (F_{\gamma,h,i+1} - F_{\gamma,h,i})^{(r)} \|_{L^p([t_i,t_{i+1}])}^p.
%	\end{multline}
%	(\ref{265}) alapján $D_{k,i}$-kre tehát a következő becslést kapjuk:
	\begin{multline} \label{369}
		D_{k,i} \leq C^p \frac{h^{rp} w_{\alpha}^p(t_i) \varphi^{rp}(t_i)}{(\gamma(t_{i+1}) - \gamma(t_i))^{(r-k)p}} \cdot \frac{1}{(\gamma^{-1}(h) \sqrt{t_i})^{pk}} \left\| F_{\gamma,h,i+1} - F_{\gamma,h,i} \right\|_{L^p([t_i,t_{i+1}])}^p \\
		+ C^p \frac{h^{rp} w_{\alpha}^p(t_i) \varphi^{rp}(t_i)}{(\gamma(t_{i+1}) - \gamma(t_i))^{(r-k)p}} \cdot \frac{(t_{i+1}-t_i)^{rp}}{(\gamma^{-1}(h) \sqrt{t_i})^{pk}} \| (F_{\gamma,h,i+1} - F_{\gamma,h,i})^{(r)} \|_{L^p([t_i,t_{i+1}])}^p.
	\end{multline}
	Since $h < \gamma(a_1)$, then by (\ref{200}):
	\begin{equation} \label{267}
		h^{rp} \leq \left( \frac{\gamma(a_1)}{a_1} \right)^{rp} (\gamma^{-1}(h))^{rp}.
	\end{equation}
	Using (\ref{267}) and the Lipschitz property of $\gamma^{-1}$ on the interval $I$:
	\begin{multline} \label{370}
		\frac{h^{rp} w_{\alpha}^p(t_i) \varphi^{rp}(t_i)}{(\gamma(t_{i+1}) - \gamma(t_i))^{(r-k)p}} \cdot \frac{1}{(\gamma^{-1}(h) \sqrt{t_i})^{pk}} \\
		\leq \left( \frac{\gamma(a_1)}{a_1} \right)^{rp} \underbrace{\left( \frac{\gamma^{-1}(h) \sqrt{t_i}}{t_{i+1} - t_i} \right)^{(r-k)p}}_{\leq (2r)^{(r-k)p}} \underbrace{\left( \frac{t_{i+1} - t_i}{\gamma(t_{i+1}) - \gamma(t_i)} \right)^{(r-k)p}}_{\text{bounded}} w_{\alpha}^{p}(t_i),
	\end{multline}
	furthermore by (\ref{52}), as $h < \gamma(a_1)$:
	\begin{equation} \label{371}
		(t_{i+1} - t_i)^{rp} \leq r^{rp} (\gamma^{-1}(h) \sqrt{t_i})^{rp} \leq C^p h^{rp} \varphi^{rp}(t_i).
	\end{equation}
	By (\ref{370}) and (\ref{371}), as $w_{\alpha}(t_i) \sim w_{\alpha}(x)$, using (\ref{167}) we get from (\ref{369}), that if $k=1,\ldots,r$, $i=1,\ldots,M$, then
	\begin{multline} \label{372}
		D_{k,i} \leq C^p \left\| (F_{\gamma,h,i+1} - F_{\gamma,h,i}) w_{\alpha} \right\|_{L^p([t_i,t_{i+1}])}^p \\
		+ C^p h^{rp} \| (F_{\gamma,h,i+1} - F_{\gamma,h,i})^{(r)} w_{\alpha} \varphi^r \|_{L^p([t_i,t_{i+1}])}^p.
	\end{multline}
	If $k=0$, then using the estimation (\ref{370}) in the case $k=0$, since $w_{\alpha}(t_i) \sim w_{\alpha}(x)$, we get the following:
	\begin{multline} \label{268}
		D_{0,i} \leq C^p \frac{h^{rp} w_{\alpha}^{p}(t_i) \varphi^{rp}(t_i)}{(\gamma(t_{i+1}) - \gamma(t_i))^{rp}} \| F_{\gamma,h,i+1} - F_{\gamma,h,i} \|_{L^p([t_i,t_{i+1}])}^{p} \\
		\leq C^p \| (F_{\gamma,h,i+1} - F_{\gamma,h,i}) w_{\alpha} \|_{L^p([t_i,t_{i+1}])}^{p}.
	\end{multline}
	
	We get similar integrals, if $M+2 \leq i \leq j-1$. Namely in this case $t_i \geq t_{M+2} > t_{M+1} > a_N+1$, so $[t_i,t_{i+1}] \subset [a_N+1,\infty)$, and in this interval $\gamma(x) = \mathcal{C}_1 x + \mathcal{C}_2$, where the constants $\mathcal{C}_1$ and $\mathcal{C}_2$ are defined by (\ref{204}) and (\ref{205}). By statement \ref{337} the (\ref{265}) estimation is now the following:
	\begin{equation*} %\label{375a}
		D_{k,i} \leq C^p \frac{h^{rp} w_{\alpha}^p(t_i) \varphi^{rp}(t_i)}{\mathcal{C}_1^{rp} (t_{i+1} - t_i)^{(r-k)p}} \| (F_{\gamma,h,i+1} - F_{\gamma,h,i})^{(k)} \|^p_{L^p([t_i,t_{i+1}])}.
	\end{equation*}
	Then by using lemma \ref{359}, the Jensen inequality, estimation (\ref{267}) and the equivalences (\ref{167}) and $w_{\alpha}(t_i) \sim w_{\alpha}(x)$, we get with a similar (but more simple) procedure as in (\ref{369}) and (\ref{370}), that	
%	Using lemma \ref{359} and the Jensen inequality:
%	\begin{multline} \label{376a}
%		D_{k,i} \leq C^p \frac{h^{rp} w_{\alpha}^p(t_i) \varphi^{rp}(t_i)}{\mathcal{C}_1^{rp} (t_{i+1} - t_i)^{(r-k)p}} (t_{i+1} - t_i)^{-kp}  \| F_{\gamma,h,i+1} - F_{\gamma,h,i} \|^p_{L^p([t_i,t_{i+1}])} + \\
%		+ C^p \frac{h^{rp} w_{\alpha}^p(t_i) \varphi^{rp}(t_i)}{\mathcal{C}_1^{rp} (t_{i+1} - t_i)^{(r-k)p}} (t_{i+1} - t_i)^{(r-k)p} \| (F_{\gamma,h,i+1} - F_{\gamma,h,i})^{(r)} \|^p_{L^p([t_i,t_{i+1}])}.
%	\end{multline}
%	By (\ref{267}), (\ref{167}), and because of $w_{\alpha}(t_i) \sim w_{\alpha}(x)$:
%	\begin{multline} \label{377}
%		D_{k,i} \leq C^p \left( \frac{\gamma(a_1)}{a_1} \right)^{rp} \underbrace{\left( \frac{\gamma^{-1}(h) \sqrt{t_i}}{t_{i+1} - t_i} \right)^{rp}}_{\leq (2r)^{rp}} \| (F_{\gamma,h,i+1} - F_{\gamma,h,i}) w_{\alpha} \|^p_{L^p([t_i,t_{i+1}])} + \\
%		+ C^p h^{rp} \| (F_{\gamma,h,i+1} - F_{\gamma,h,i})^{(r)} w_{\alpha} \varphi^r \|^p_{L^p([t_i,t_{i+1}])} \leq \\
%		\leq C^p \| (F_{\gamma,h,i+1} - F_{\gamma,h,i}) w_{\alpha} \|^p_{L^p([t_i,t_{i+1}])} + C^p h^{rp} \| (F_{\gamma,h,i+1} - F_{\gamma,h,i})^{(r)} w_{\alpha} \varphi^r \|^p_{L^p([t_i,t_{i+1}])},
%	\end{multline}
%	if $k=0,\ldots,r$, $i=M+1,\ldots,j-1$. So we get from (\ref{268}), (\ref{372}) and (\ref{377}), that if $1 \leq i \leq j-1$, $k=0,\ldots,r$, then
	\begin{multline} \label{378}
		D_{k,i} \leq C^p \| (F_{\gamma,h,i+1} - F_{\gamma,h,i}) w_{\alpha} \|^p_{L^p([t_i,t_{i+1}])} \\
		+ C^p h^{rp} \| (F_{\gamma,h,i+1} - F_{\gamma,h,i})^{(r)} w_{\alpha} \varphi^r \|^p_{L^p([t_i,t_{i+1}])}
	\end{multline}
	is also true if $k=0,\ldots,r$, $i=M+1,\ldots,j-1$. So we get with (\ref{268}) and (\ref{372}), that (\ref{378}) holds if $k=0,\ldots,r$, $i=1,\ldots,j-1$.
	With the triangular inequality and notation (\ref{70}), integrating on a grater domain we got for the first term of the sum in (\ref{378}) by (\ref{79}):
%	\begin{multline} \label{379}
%		C^p \| (F_{\gamma,h,i+1} - F_{\gamma,h,i}) w_{\alpha} \|^p_{L^p([t_i,t_{i+1}])} \leq \\
%		\leq C^p  \left( \| (f - F_{\gamma,h,i+1}) w_{\alpha} \|_{L^p([t_i,t_{i+1}])}^{p} + \| (f - F_{\gamma,h,i}) w_{\alpha} \|_{L^p([t_i,t_{i+1}])}^{p} \right).
%	\end{multline}
%		Mivel
%	\begin{equation} \label{240}
%		\| (f - F_{\gamma,h,i+1}) w_{\alpha} \|_{L^p([t_i,t_{i+1}])}^{p} \leq \| (f - F_{\gamma,h,i+1}) w_{\alpha} \|_{L^p([t_i,t_{i+2}])}^{p} = I_{i+1},
%	\end{equation}
%	illetve
%	\begin{equation} \label{241}
%		\| (f - F_{\gamma,h,i}) w_{\alpha} \|_{L^p([t_i,t_{i+1}])}^{p} \leq \| (f - F_{\gamma,h,i}) w_{\alpha} \|_{L^p([t_{i-1},t_{i+1}])}^{p} = I_i,
%	\end{equation}
%	ezért (\ref{79}) szerint
	\begin{multline} \label{242}
		C^p \| (F_{\gamma,h,i+1} - F_{\gamma,h,i}) w_{\alpha} \|^p_{L^p([t_i,t_{i+1}])} \leq C^p (I_{i+1}+I_i) \\
%		\leq C^p \left( 2 \mathcal{C}_1 B^p(r) \frac{a}{h} \int_{0}^{\frac{h}{a}}  \int_{t_{i}}^{t_{i+2}} \left| \overrightarrow{\Delta}_{\gamma,\varphi,\nu}^{r} f(x) \right|^p w_{\alpha}^p(x) \, \mathrm{d} x \mathrm{d} \nu + \right. \\
%		+ \left. 2 \mathcal{C}_1 B^p(r) \frac{a}{h} \int_{0}^{\frac{h}{a}}  \int_{t_{i-1}}^{t_{i+1}} \left| \overrightarrow{\Delta}_{\gamma,\varphi,\nu}^{r} f(x) \right|^p w_{\alpha}^p(x) \, \mathrm{d} x \mathrm{d} \nu \right) \leq \\
		\leq C^p \frac{a}{h} \int_{0}^{\frac{h}{a}} \left( \int_{t_{i}}^{t_{i+2}} \left| \overrightarrow{\Delta}_{\gamma,\varphi,\nu}^{r} f(x) \right|^p w_{\alpha}^p(x) \, \mathrm{d} x + \int_{t_{i-1}}^{t_{i+1}} \left| \overrightarrow{\Delta}_{\gamma,\varphi,\nu}^{r} f(x) \right|^p w_{\alpha}^p(x) \, \mathrm{d} x \right) \, \mathrm{d} \nu.
	\end{multline}
	The second term of (\ref{378}) using again the triangular inequality and integrating on a greater domain:
	\begin{multline} \label{374}
		C^p h^{rp} \| (F_{\gamma,h,i+1} - F_{\gamma,h,i})^{(r)} w_{\alpha} \varphi^r \|_{L^p([t_i,t_{i+1}])}^p \\
%		\leq C^p 2^{p-1}h^{rp} \left( \| (F_{\gamma,h,i+1})^{(r)} w_{\alpha} \varphi^r \|_{L^p([t_i,t_{i+1}])}^p + \| (F_{\gamma,h,i})^{(r)} w_{\alpha} \varphi^r \|_{L^p([t_i,t_{i+1}])}^p \right) \leq \\
		\leq C^p 2^{p-1}h^{rp} \left( \| (F_{\gamma,h,i+1})^{(r)} w_{\alpha} \varphi^r \|_{L^p([t_i,t_{i+2}])}^p + \| (F_{\gamma,h,i})^{(r)} w_{\alpha} \varphi^r \|_{L^p([t_{i-1},t_{i+1}])}^p \right),
	\end{multline}
	so it is sufficient to estimate the integrals
	\begin{equation*} %\label{249}
		J_n := h^{rp} \| (F_{\gamma,h,n})^{(r)} \varphi^r w_{\alpha} \|_{L^p([t_{n-1},t_{n+1}])}^p %= \int_{t_{n-1}}^{t_{n+1}} h^{rp} |(F_{\gamma,h,n})^{(r)}(x) \varphi^r(x) w_{\alpha}(x)|^p \, \mathrm{d} x
	\end{equation*}
	where $n=1,\ldots,j$.
%	\begin{multline} \label{375}
%		J_n = \int_{t_{n-1}}^{t_{n+1}} h^{rp} \left| (F_{\gamma,h,n})^{(r)}(x) \right|^p \varphi^{rp}(x) w_{\alpha}^p(x) \, \mathrm{d} \gamma(x) = \\
%		= \int_{t_{n-1}}^{t_{n+1}} h^{rp} \left| \frac{2a}{h} \int_{\frac{h}{2a}}^{\frac{h}{a}} (f_{\gamma,\tau,\varphi(t_{n-1})})^{(r)}(x) \, \mathrm{d} \tau \right|^p \varphi^{rp}(x) w_{\alpha}^p(x) \, \mathrm{d} x \leq 
%	\end{multline}
	Using the Hölder inequality we get
	\begin{equation} \label{376}
		J_n \leq \int_{t_{n-1}}^{t_{n+1}} h^{rp} \frac{2a}{h} \int_{\frac{h}{2a}}^{\frac{h}{a}} \left| (f_{\gamma,\tau,\varphi(t_{n-1})})^{(r)}(x) \right|^p \, \mathrm{d} \tau \varphi^{rp}(x) w_{\alpha}^p(x) \, \mathrm{d} x.
	\end{equation}
	From the definition of $f_{\gamma,\tau, \varphi(t_{n-1})}$
	\begin{equation*} %\label{259}
		(f_{\gamma,\tau,\varphi(t_{n-1})})^{(r)}(x) = r^r \sum_{l=1}^{r} (-1)^{l+1} \binom{r}{l} \frac{1}{(l \gamma^{-1}(\tau) \sqrt{t_{n-1}})^r} \overrightarrow{\Delta}_{\frac{l \gamma^{-1}(\tau) \sqrt{t_{n-1}}}{r}}^{r} f(x)
	\end{equation*}
	holds, so by using the Jensen inequality and this remark we get from (\ref{376})
%	\begin{equation} \label{330}
%		\left| (f_{\gamma,\tau,\varphi(t_{n-1})})^{(r)}(x) \right|^p \leq C_{5}^p \sum_{l=1}^{r} \frac{1}{(\gamma^{-1}(\tau))^{rp} \varphi^{rp}(t_{n-1})} \left| \overrightarrow{\Delta}_{\frac{l \gamma^{-1}(\tau) \sqrt{t_{n-1}}}{r}}^r f(x) \right|^p
%	\end{equation}
%	\begin{multline} \label{330}
%		\left| (f_{\gamma,\tau,\varphi(t_{n-1})})^{(r)}(x) \right|^p = r^{rp} \left| \sum_{l=1}^{r} (-1)^{l+1} {r \choose l} \frac{1}{(l \gamma^{-1}(\tau) \sqrt{t_{n-1}})^r} \overrightarrow{\Delta}_{\frac{l \gamma^{-1}(\tau) \sqrt{t_{n-1}}}{r}}^{r} f(x) \right|^p \leq \\
%		\leq r^{rp} r^{p-1} \sum_{l=1}^{r} {r \choose l}^p \frac{1}{l^p} \cdot \frac{1}{(\gamma^{-1}(\tau))^{rp} \varphi^{rp}(t_{n-1})} \left| \overrightarrow{\Delta}_{\frac{l \gamma^{-1}(\tau) \sqrt{t_{n-1}}}{r}}^r f(x) \right|^p \leq \\
%		\leq C_{17}^p \sum_{l=1}^{r} \frac{1}{(\gamma^{-1}(\tau))^{rp} \varphi^{rp}(t_{n-1})} \left| \overrightarrow{\Delta}_{\frac{l \gamma^{-1}(\tau) \sqrt{t_{n-1}}}{r}}^r f(x) \right|^p,
%	\end{multline}
%	where $C_{5} := r^{r+1} \max_{l=1,\ldots,r} {r \choose l} \frac{1}{l}$. Így \told(\ref{376})+bol azt kapjuk, hogy
	\begin{equation} \label{331}
		J_n \leq C^p \sum_{l=1}^{r} \frac{2a}{h} \int_{t_{n-1}}^{t_{n+1}} \int_{\frac{h}{2a}}^{\frac{h}{a}} \frac{h^{rp} \varphi^{rp}(x) w_{\alpha}^{p}(x)}{(\gamma^{-1}(\tau))^{rp} \varphi^{rp}(t_{n-1})} \left| \overrightarrow{\Delta}_{\frac{l \gamma^{-1}(\tau) \sqrt{t_{n-1}}}{r}}^r f(x) \right|^p \, \mathrm{d} \tau \mathrm{d} x.
	\end{equation}
%	\begin{multline} \label{331}
%		J_n \leq \int_{t_{n-1}}^{t_{n+1}} h^{rp} \frac{2a}{h} \int_{\frac{h}{2a}}^{\frac{h}{a}} \left| (f_{\gamma,\tau,\varphi(t_{n-1})})^{(r)}(x) \right|^p \, \mathrm{d} \tau \varphi^{rp}(x) w_{\alpha}^p(x) \, \mathrm{d} x \leq \\
%		\leq C_{5}^p \int_{t_{n-1}}^{t_{n+1}} h^{rp} \frac{2a}{h} \int_{\frac{h}{2a}}^{\frac{h}{a}} \sum_{l=1}^{r} \frac{1}{(\gamma^{-1}(\tau))^{rp} \varphi^{rp}(t_{n-1})} \left| \overrightarrow{\Delta}_{\frac{l \gamma^{-1}(\tau) \sqrt{t_{n-1}}}{r}}^r f(x) \right|^p \, \mathrm{d} \tau \varphi^{rp}(x) w_{\alpha}^p(x) \, \mathrm{d} x = \\
%		= C_{5}^p \sum_{l=1}^{r} \frac{2a}{h} \int_{t_{n-1}}^{t_{n+1}} \int_{\frac{h}{2a}}^{\frac{h}{a}} \frac{h^{rp} \varphi^{rp}(x) w_{\alpha}^{p}(x)}{(\gamma^{-1}(\tau))^{rp} \varphi^{rp}(t_{n-1})} \left| \overrightarrow{\Delta}_{\frac{l \gamma^{-1}(\tau) \sqrt{t_{n-1}}}{r}}^r f(x) \right|^p \, \mathrm{d} \tau \mathrm{d} x \leq 
%	\end{multline}
	Since $\frac{h}{2a} \leq \tau \leq \frac{h}{a}$ and $\frac{h}{2a} < h < \gamma(a_1)$, then by (\ref{200})
	\begin{equation} \label{314}
		\frac{h^{rp}}{(\gamma^{-1}(\tau))^{rp}} \leq \frac{h^{rp}}{\left( \gamma^{-1} \left( \frac{h}{2a} \right) \right)^{rp}} \leq \frac{h^{rp}}{\left( \frac{a_1}{\gamma(a_1)} \right)^{rp} \frac{h^{rp}}{(2a)^{rp}}} = \left( \frac{2 a \gamma(a_1)}{a_1} \right)^{rp}.
	\end{equation}
	In addition $t_{n-1} \leq x \leq t_{n+1}$, so by (\ref{77}) $x \sim t_{n-1}$ and $\varphi^{rp}(x) \sim \varphi^{rp}(t_{n-1})$ too.
%	\begin{equation} \label{332}
%		\varphi^{rp}(x) \sim \varphi^{rp}(t_{n-1}).
%	\end{equation}
	So there is a constant $C$, such that we can estimate (\ref{331}):
%	Ezek szerint létezik $C$ konstans, hogy (\ref{331}) felülről becsülhető:
	\begin{equation} \label{333}
		J_n \leq C^p \sum_{l=1}^{r} \frac{2a}{h} \int_{t_{n-1}}^{t_{n+1}} \int_{0}^{\frac{h}{a}} \left| \overrightarrow{\Delta}_{\frac{l \gamma^{-1}(\tau) \sqrt{t_{n-1}}}{r}}^r f(x) \right|^p w_{\alpha}^{p}(x) \, \mathrm{d} \tau \mathrm{d} x.
	\end{equation}
%	\begin{multline} \label{333}
%		\leq C^p \sum_{l=1}^{r} \frac{2a}{h} \int_{t_{n-1}}^{t_{n+1}} \int_{\frac{h}{2a}}^{\frac{h}{a}} \left| \overrightarrow{\Delta}_{\frac{l \gamma^{-1}(\tau) \sqrt{t_{n-1}}}{r}}^r f(x) \right|^p w_{\alpha}^{p}(x) \, \mathrm{d} \tau \mathrm{d} x \leq \\
%		\leq C^p \sum_{l=1}^{r} \frac{2a}{h} \int_{t_{n-1}}^{t_{n+1}} \int_{0}^{\frac{h}{a}} \left| \overrightarrow{\Delta}_{\frac{l \gamma^{-1}(\tau) \sqrt{t_{n-1}}}{r}}^r f(x) \right|^p w_{\alpha}^{p}(x) \, \mathrm{d} \tau \mathrm{d} x.
%	\end{multline}
	In the inner integral let us change the variables $\frac{l \gamma^{-1}(\tau) \sqrt{t_{n-1}}}{r} =: \gamma^{-1}(\nu) \sqrt{x}$. Due to the right choice for $T$, we can do this, and as soon as we procedure from (\ref{76}), we get that
	\begin{equation*} %\label{325}
		\int_{0}^{\frac{h}{a}} \left| \overrightarrow{\Delta}_{\frac{l \gamma^{-1}(\tau) \sqrt{t_{n-1}}}{r}}^{r} f(x) \right|^p w_{\alpha}^p(x) \, \mathrm{d} \tau \leq C^p \int_{0}^{\frac{h}{a}} \left| \overrightarrow{\Delta}_{\gamma,\varphi,\nu}^{r} f(x) \right|^p w_{\alpha}^p(x) \, \mathrm{d} \nu.
	\end{equation*}
	Substituting to (\ref{333}) and using te Fubini theorem:
	\begin{equation} \label{334}
		J_n \leq 2r C^p \frac{a}{h} \int_{0}^{\frac{h}{a}} \int_{t_{n-1}}^{t_{n+1}} \left| \overrightarrow{\Delta}_{\gamma,\varphi,\nu}^{r} f(x) \right|^p w_{\alpha}^p(x) \, \mathrm{d} x \, \mathrm{d} \nu,
	\end{equation}
%	\begin{multline} \label{334}
%		J_n \leq C^p \sum_{l=1}^{r} \frac{2a}{h} \int_{t_{n-1}}^{t_{n+1}} \int_{0}^{\frac{h}{a}} \left| \overrightarrow{\Delta}_{\gamma,\varphi,\nu}^{r} f(x) \right|^p w_{\alpha}^p(x) \, \mathrm{d} \nu \mathrm{d} x = \\
%		= 2r C^p \frac{a}{h} \int_{0}^{\frac{h}{a}} \int_{t_{n-1}}^{t_{n+1}} \left| \overrightarrow{\Delta}_{\gamma,\varphi,\nu}^{r} f(x) \right|^p w_{\alpha}^p(x) \, \mathrm{d} x \, \mathrm{d} \nu,
%	\end{multline}
	Let us write this into (\ref{374}), then we get by (\ref{378}) and (\ref{242}) that if $1 \leq i \leq j-1$, $k=0,\ldots,r$:
%	\begin{multline} \label{380}
%		C^p h^{rp} \| (F_{\gamma,h,i+1} - F_{\gamma,h,i})^{(r)} w_{\alpha} \varphi^r \|_{L^p([t_i,t_{i+1}])}^p \leq C^p 2^p (J_{i+1} + J_i) \leq \\
%		\leq C^p \frac{a}{h} \int_{0}^{\frac{h}{a}} \left( \int_{t_{i}}^{t_{i+2}} \left| \overrightarrow{\Delta}_{\gamma,\varphi,\nu}^{r} f(x) \right|^p w_{\alpha}^p(x) \, \mathrm{d} x + \int_{t_{i-1}}^{t_{i+1}} \left| \overrightarrow{\Delta}_{\gamma,\varphi,\nu}^{r} f(x) \right|^p w_{\alpha}^p(x) \, \mathrm{d} x \right) \, \mathrm{d} \nu.
%	\end{multline}
%	(\ref{378}), (\ref{242}) és (\ref{380}) alapján tehát minden $1 \leq i \leq j-1$, $k=0,\ldots,r$ esetén
	\begin{equation} \label{381}
		D_{k,i} \leq C^p \frac{a}{h} \int_{0}^{\frac{h}{a}} \left( \int_{t_{i}}^{t_{i+2}} \left| \overrightarrow{\Delta}_{\gamma,\varphi,\nu}^{r} f(x) \right|^p w_{\alpha}^p(x) \, \mathrm{d} x + \int_{t_{i-1}}^{t_{i+1}} \left| \overrightarrow{\Delta}_{\gamma,\varphi,\nu}^{r} f(x) \right|^p w_{\alpha}^p(x) \, \mathrm{d} x \right) \, \mathrm{d} \nu.
	\end{equation}
%	azaz (\ref{148}) szerint, ha $i=1,\ldots,j-1$, akkor
%	\begin{multline} \label{382}
%		h^{rp} \| (G_{\gamma,h})^{(r)}_{\gamma} \varphi^r w_{\alpha} \|_{L^p([t_i,t_{i+1}])}^{p} \leq h^{rp} (r+2)^{p-1} \| (F_{\gamma,h,i})^{(r)}_{\gamma} \varphi^{r} w_{\alpha} \|_{L^p([t_i,t_{i+1}])}^{p} + \\
%		+ (r+2)^{p-1} \sum_{k=0}^{r} {r \choose k} C^p \frac{a}{h} \int_{0}^{\frac{h}{a}} \left( \int_{t_{i}}^{t_{i+2}} \left| \overrightarrow{\Delta}_{\gamma,\varphi,\nu}^{r} f(x) \right|^p w_{\alpha}^p(x) \, \mathrm{d} x + \int_{t_{i-1}}^{t_{i+1}} \left| \overrightarrow{\Delta}_{\gamma,\varphi,\nu}^{r} f(x) \right|^p w_{\alpha}^p(x) \, \mathrm{d} x \right) \, \mathrm{d} \nu.
%	\end{multline}
	The estimation of the first term in (\ref{148}) is yet to come: we do similarly as we did it at the estimation of the $D_{k,i}$-s. % Let us suppose that $r \geq 2$. 
	If $i=1,\ldots,M+1$, then similarly as in (\ref{167}) $w_{\alpha}(t_{i-1}) \varphi^r(t_{i-1}) \sim w_{\alpha}(x) \varphi^r(x)$,
%	\begin{equation} \label{447}
%		w_{\alpha}(t_{i-1}) \varphi^r(t_{i-1}) \sim w_{\alpha}(x) \varphi^r(x),
%	\end{equation}
%	mert ha $t_{i-1} \leq x \leq t_{i+1}$, akkor
%	\begin{equation} \label{446}
%		1 \leq \frac{x}{t_{i-1}} \leq \frac{t_{i+1}}{t_{i-1}} = \frac{t_{i+1}}{t_{i}} \cdot \frac{t_i}{t_{i-1}} \leq \left( \frac{1 + 2 \sqrt{A_1}}{2 \sqrt{A_1}} \right)^2,
%	\end{equation}
	and by lemma \ref{37} we have $w_{\alpha}(x) \sim w_{\alpha}(t_i) \sim w_{\alpha}(t_{i-1})$, so we get with a computation as in (\ref{265}) and (\ref{362}) that
	\begin{multline} \label{383}
		h^{rp} \| (F_{\gamma,h,i})^{(r)}_{\gamma} \varphi^{r} w_{\alpha} \|_{L^p([t_i,t_{i+1}])}^{p} %\leq h^{rp} \| (F_{\gamma,h,i})^{(r)}_{\gamma} \varphi^{r} w_{\alpha} \|_{L^p([t_{i-1},t_{i+1}])}^{p} \leq \\
		\leq  h^{rp} C^p w_{\alpha}^p(t_{i-1}) \varphi^{rp}(t_{i-1}) \| (F_{\gamma,h,i})^{(r)}_{\gamma} \|^{p}_{L^p([t_{i-1},t_{i+1}])} \\
		%\leq C^p h^{rp} w_{\alpha}^p(t_{i-1}) \varphi^{rp}(t_{i-1}) \sum_{m=1}^{r} \| (F_{\gamma,h,i})^{(m)} \|^p_{L^p([t_{i-1},t_{i+1}])} = \\
		\leq C^p \sum_{m=1}^{r-1} \underbrace{h^{rp} \| (F_{\gamma,h,i})^{(m)}\varphi^{r} w_{\alpha} \|^p_{L^p([t_{i-1},t_{i+1}])}}_{=: \widetilde{D}_{m,i}} + C^p h^{rp} \| (F_{\gamma,h,i})^{(r)} \varphi^{r} w_{\alpha} \|^p_{L^p([t_{i-1},t_{i+1}])},
	\end{multline}
	if $r=1$, the first sum is empty. Let us suppose that $r \geq 2$, then the last term we can estimate easily by (\ref{334}):
	\begin{equation} \label{415}
		C^p h^{rp} \| (F_{\gamma,h,i})^{(r)} \varphi^{r} w_{\alpha} \|^p_{L^p([t_{i-1},t_{i+1}])} \leq C^p \frac{a}{h} \int_{0}^{\frac{h}{a}} \int_{t_{i-1}}^{t_{i+1}} \left| \overrightarrow{\Delta}_{\gamma,\varphi,\nu}^{r} f(x) \right|^p w_{\alpha}^p(x) \, \mathrm{d} x \, \mathrm{d} \nu.
	\end{equation}
%	\begin{multline} \label{415}
%		C^p h^{rp} \| (F_{\gamma,h,i})^{(r)} \varphi^{r} w_{\alpha} \|^p_{L^p([t_{i-1},t_{i+1}])} = C^p J_i \leq \\
%		\leq C^p \frac{a}{h} \int_{0}^{\frac{h}{a}} \int_{t_{i-1}}^{t_{i+1}} \left| \overrightarrow{\Delta}_{\gamma,\varphi,\nu}^{r} f(x) \right|^p w_{\alpha}^p(x) \, \mathrm{d} x \, \mathrm{d} \nu.
%	\end{multline}
%	Az első $r-1$ tag becsléséhez használjuk fel, hogy a Hölder-egyenlőtlenség szerint
%	\begin{multline} \label{416}
%		h^{rp} \| (F_{\gamma,h,i})^{(m)} \varphi^r w_{\alpha} \|^p_{L^p([t_{i-1},{t_{i+1}}])} \leq \\
%		\leq \int_{t_{i-1}}^{t_{i+1}} h^{rp} \frac{2a}{h} \int_{\frac{h}{2a}}^{\frac{h}{a}} |(f_{\gamma,\tau,\varphi(t_{i-1})})^{(m)}(x)|^p \, \mathrm{d} \tau \varphi^{rp}(x) w_{\alpha}^p(x) \mathrm{d}x.
%	\end{multline}
	For the estimation of the first $r-1$ terms let us use that if $1 \leq m \leq r-1$, then
	\begin{multline} \label{417}
		(f_{\gamma,\tau,\varphi(t_{i-1})})^{(m)}(x) = r^r \sum_{l=1}^{r} (-1)^{l+1} \binom{r}{l} \frac{1}{(l \gamma^{-1}(\tau) \sqrt{t_{i-1}})^m} \\
		\times \underbrace{\int_{0}^{\frac{1}{r}} \cdots \int_{0}^{\frac{1}{r}}}_{r-m} \overrightarrow{\Delta}^m_{\frac{l \gamma^{-1}(\tau) \sqrt{t_{i-1}}}{r}} f(x + l \gamma^{-1}(\tau) \sqrt{t_{i-1}}(u_{m+1} + \ldots + u_r)) \, \mathrm{d}u_{m+1} \ldots \mathrm{d}u_r.
	\end{multline}
%	vagyis
%	\begin{multline} \label{418}
%		|(f_{\gamma,\tau,\varphi(t_{i-1})})^{(m)}(x)| \leq r^r \sum_{l=1}^{r} {r \choose l} \frac{1}{(l \gamma^{-1}(\tau) \sqrt{t_{i-1}})^m} \cdot \\
%		\cdot \underbrace{\int_{0}^{\frac{1}{r}} \cdots \int_{0}^{\frac{1}{r}}}_{r-m \text{ db}} \left| \overrightarrow{\Delta}^m_{\frac{l \gamma^{-1}(\tau) \sqrt{t_{i-1}}}{r}} f(x + l \gamma^{-1}(\tau) \sqrt{t_{i-1}}(u_{m+1} + \ldots + u_r)) \right| \, \mathrm{d}u_{m+1} \ldots \mathrm{d}u_r.
%	\end{multline}
	%Similarly as we did it in lemma \ref{232}, 
	Let us introduce the new variable $u := u_{m+1} + \ldots + u_r$, then $0 < u < \frac{r-m}{r}$, and there is a $B^{r,m}$ contant (the volume of the corresponding simplex), thus with the help of the Hölder and Jensen inequalities and (\ref{417}), we get
%	\begin{multline} \label{419}
%		|(f_{\gamma,\tau,\varphi(t_{i-1})})^{(m)}(x)| \leq \\
%		\leq r^r \sum_{l=1}^{r} {r \choose l} \frac{B^{r,m}}{(l \gamma^{-1}(\tau) \sqrt{t_{i-1}})^m} \int_{0}^{\frac{r-m}{r}} \left| \overrightarrow{\Delta}^m_{\frac{l \gamma^{-1}(\tau) \sqrt{t_{i-1}}}{r}} f(x + l \gamma^{-1}(\tau) \sqrt{t_{i-1}}u) \right| \, \mathrm{d}u.
%	\end{multline}
%	A Jensen- és Hölder-egyenlőtlenség alapján ebből azt kapjuk, hogy
	\begin{multline} \label{420}
	|(f_{\gamma,\tau,\varphi(t_{i-1})})^{(m)}(x)|^p \leq 
		r^{rp} r^{p-1} \sum_{l=1}^{r} \binom{r}{l}^p  \frac{(B^{r,m})^p}{(l \gamma^{-1}(\tau) \sqrt{t_{i-1}})^{mp}} \left( \frac{r-m}{r} \right)^{mp} \\
		\times \int_{0}^{\frac{r-m}{r}} \left| \overrightarrow{\Delta}^m_{\frac{l \gamma^{-1}(\tau) \sqrt{t_{i-1}}}{r}} f(x + l \gamma^{-1}(\tau) \sqrt{t_{i-1}}u) \right|^p \, \mathrm{d}u \\
		\leq C^p \sum_{l=1}^{r} \frac{1}{(\gamma^{-1}(\tau) \sqrt{t_{i-1}})^{mp}} \int_{0}^{1} \left| \overrightarrow{\Delta}^m_{\frac{l \gamma^{-1}(\tau) \sqrt{t_{i-1}}}{r}} f(x + l \gamma^{-1}(\tau) \sqrt{t_{i-1}}u) \right|^p \, \mathrm{d}u.
	\end{multline}
	By the definition of $F_{\gamma,h,i}$, the Hölder inequality and (\ref{420}) we get that
	\begin{multline*} %\label{421}
		\widetilde{D}_{m,i} %\leq h^{rp} \| (F_{\gamma,h,i})^{(m)} \varphi^r w_{\alpha} \|^p_{L^p([t_{i-1},{t_{i+1}}])} %\leq \\
%		\leq \int_{t_{i-1}}^{t_{i+1}} \frac{2a}{h} \int_{\frac{h}{2a}}^{\frac{h}{a}} C^p \sum_{l=1}^{r} \left( \frac{h^{rp}}{(\gamma^{-1}(\tau) \sqrt{t_{i-1}})^{mp}} \cdot \right. \\
%		\left. \cdot \int_{0}^{1} \left| \overrightarrow{\Delta}^m_{\frac{l \gamma^{-1}(\tau) \sqrt{t_{i-1}}}{r}} f(x + l \gamma^{-1}(\tau) \sqrt{t_{i-1}}u) \right|^p \, \mathrm{d}u \right) \, \mathrm{d} \tau \varphi^{rp}(x) w_{\alpha}^p(x) \mathrm{d}x = \\
		\leq C^p \sum_{l=1}^{r} \Bigg( \frac{h^{rp}}{(\gamma^{-1}(\tau) \sqrt{t_{i-1}})^{mp}} \cdot \frac{2a}{h} \\
		\times \int_{\frac{h}{2a}}^{\frac{h}{a}} \int_{0}^{1} \int_{t_{i-1}}^{t_{i+1}} \left| \overrightarrow{\Delta}^m_{\frac{l \gamma^{-1}(\tau) \sqrt{t_{i-1}}}{r}} f(x + l \gamma^{-1}(\tau) \sqrt{t_{i-1}}u) \right|^p \varphi^{rp}(x) w_{\alpha}^p(x) \, \mathrm{d}x \mathrm{d} u \mathrm{d} \tau \Bigg).
	\end{multline*}
	In the inner integral let us change the variables $x + l \gamma^{-1}(\tau) \sqrt{t_{i-1}} u =: \xi$. Since $\tau \leq \frac{h}{a} < h$, $u < 1$ and $t_{i-1} \leq x$, thus $x \sim \xi$, so
%	\begin{equation} \label{423}
%		x \leq \xi = x + l \gamma^{-1}(\tau) \sqrt{t_{i-1}} u < x + r \gamma^{-1}(h) \sqrt{x},
%	\end{equation}
	by lemma \ref{37} we get $w_{\alpha}(x) \sim w_{\alpha}(\xi)$, and
	\begin{multline} \label{422}
		\widetilde{D}_{m,i} %\leq h^{rp} \| (F_{\gamma,h,i})^{(m)} \varphi^r w_{\alpha} \|^p_{L^p([t_{i-1},{t_{i+1}}])} 
		\leq C^p \sum_{l=1}^{r} \frac{h^{rp}}{(\gamma^{-1}(\tau) \sqrt{t_{i-1}})^{mp}} \cdot \frac{2a}{h} \\
		\times \int_{\frac{h}{2a}}^{\frac{h}{a}} \int_{0}^{1} \int_{t_{i-1} + l \gamma^{-1}(\tau) \sqrt{t_{i-1}} u}^{t_{i+1} + l \gamma^{-1}(\tau) \sqrt{t_{i-1}} u} \left| \overrightarrow{\Delta}^m_{\frac{l \gamma^{-1}(\tau) \sqrt{t_{i-1}}}{r}} f(\xi) \right|^p \varphi^{rp}(\xi) w_{\alpha}^p(\xi) \, \mathrm{d} \xi \mathrm{d} u \mathrm{d} \tau.
	\end{multline}
	Since $a$ was choosen properly by (\ref{281}), and $\frac{h}{a} < h < \gamma(a_1)$, by using (\ref{200}):
	\begin{equation*} %\label{423a}
		t_{i+1} + l \gamma^{-1}(\tau) \sqrt{t_{i-1}} u < t_{i+1} + r \sqrt{t_{i+1}} \frac{1}{\sum_{k=1}^N \beta_k a_k^{\beta_k - 1}} \cdot \frac{h}{a} < t_{i+2},
	\end{equation*}
%	\begin{multline} \label{423a}
%		t_{i+1} + l \gamma^{-1}(\tau) \sqrt{t_{i-1}} u < t_{i+1} + r \gamma^{-1} \left( \frac{h}{a} \right) \sqrt{t_{i+1}} \leq t_{i+1} + r \sqrt{t_{i+1}} \frac{1}{\sum_{k=1}^N \beta_k a_k^{\beta_k - 1}} \cdot \frac{h}{a} < \\
%		<  t_{i+1} + r \sqrt{t_{i+1}} \frac{h}{\sum_{k=1}^N \beta_k a_k^{\beta_k - 1}} \cdot \frac{a_1 \sum_{k=1}^N \beta_k a_k^{\beta_k - 1}}{2r^2 \gamma(a_1)} \leq t_{i+1} + \frac{1}{2r} \gamma^{-1}(h) \sqrt{t_{i+1}} \leq t_{i+2},
%	\end{multline}
	with the help of (\ref{52}). So integrating on a greater domain, the estimation of (\ref{422}) can be carried on:
	\begin{equation} \label{424}
		\widetilde{D}_{m,i} %\leq h^{rp} \| (F_{\gamma,h,i})^{(m)} \varphi^r w_{\alpha} \|^p_{L^p([t_{i-1},{t_{i+1}}])} \leq \\
		\leq C^p \sum_{l=1}^{r} \frac{2a}{h} \int_{\frac{h}{2a}}^{\frac{h}{a}} \int_{t_{i-1}}^{t_{i+2}} \frac{h^{rp} \varphi^{rp}(\xi)}{(\gamma^{-1}(\tau) \sqrt{t_{i-1}})^{mp}} \left| \overrightarrow{\Delta}^m_{\frac{l \gamma^{-1}(\tau) \sqrt{t_{i-1}}}{r}} f(\xi) \right|^p w_{\alpha}^p(\xi) \, \mathrm{d} \xi \mathrm{d} \tau.
	\end{equation}
%	\begin{multline} \label{424}
%		\leq C^p \sum_{l=1}^{r} \frac{h^{rp}}{(\gamma^{-1}(\tau) \sqrt{t_{i-1}})^{mp}} \cdot \frac{2a}{h} \int_{\frac{h}{2a}}^{\frac{h}{a}} \int_{0}^{1} \int_{t_{i-1}}^{t_{i+2}} \left| \overrightarrow{\Delta}^m_{\frac{l \gamma^{-1}(\tau) \sqrt{t_{i-1}}}{r}} f(\xi) \right|^p \varphi^{rp}(\xi) w_{\alpha}^p(\xi) \, \mathrm{d} \xi \mathrm{d} u \mathrm{d} \tau \leq \\
%		= C^p \sum_{l=1}^{r} \frac{2a}{h} \int_{\frac{h}{2a}}^{\frac{h}{a}} \int_{t_{i-1}}^{t_{i+2}} \frac{h^{rp} \varphi^{rp}(\xi)}{(\gamma^{-1}(\tau) \sqrt{t_{i-1}})^{mp}} \left| \overrightarrow{\Delta}^m_{\frac{l \gamma^{-1}(\tau) \sqrt{t_{i-1}}}{r}} f(\xi) \right|^p w_{\alpha}^p(\xi) \, \mathrm{d} \xi \mathrm{d} \tau.
%	\end{multline}
	Since $\frac{h}{2a} \leq \tau \leq \frac{h}{a}$ and $\frac{h}{2a} < h < \gamma(a_1)$, similarly as in (\ref{314}) we get that
	\begin{equation*} %\label{425}
		\frac{h^{mp}}{(\gamma^{-1}(\tau))^{mp}} \leq \left( \frac{2 a \gamma(a_1)}{a_1} \right)^{mp} %\leq \left( \max_{m=1,\ldots,r-1} \left\{ \left( \frac{2 a \gamma(a_1)}{a_1} \right)^{m} \right\} \right)^p.
	\end{equation*}
%	És
%	\begin{equation} \label{426}
%		\frac{h^{rp}}{(\gamma^{-1}(\tau))^{mp}} \leq \left( \frac{h}{\gamma^{-1}(\tau)} \right)^{mp} < \left( \frac{h}{\frac{a_1}{\gamma(a_1)} \cdot \frac{h}{2a}} \right)^{mp} \leq \left( \frac{2 a \gamma(a_1)}{a_1} \right)^{rp}.
%	\end{equation}
	In addition $\varphi(\xi) \sim \varphi(t_{i}) \sim \varphi(t_{i-1})$, so there exists a constant $C$:
	\begin{equation*} %\label{427}
		\frac{\varphi^{rp}(\xi)}{\varphi^{mp}(t_{i-1})} \leq C^p \frac{\varphi^{rp}(t_{i-1})}{\varphi^{mp}(t_{i-1})} = C^p \varphi^{(r-m)p}(t_{i-1}) \leq C^p \left( \sqrt{a_N + 1} \right)^{rp},
	\end{equation*}
	because now $1 \leq i \leq M+1$. % Ezek alapján tehát
%	\begin{equation} \label{428}
%		\frac{h^{rp} \varphi^{rp}(\xi)}{(\gamma^{-1}(\tau) \sqrt{t_{i-1}})^{mp}} = \frac{h^{mp}}{(\gamma^{-1}(\tau))^{mp}} \cdot \frac{\varphi^{rp}(\xi)}{\varphi^{mp}(t_{i-1})} \cdot h^{(r-m)p} \leq C^p h^{(r-m)p},
%	\end{equation}
	Using these remarks we can estimate (\ref{424}):
	\begin{equation} \label{429}
		\widetilde{D}_{m,i} \leq %h^{rp} \| (F_{\gamma,h,i})^{(m)} \varphi^r w_{\alpha} \|^p_{L^p([t_{i-1},{t_{i+1}}])} \leq 
		C^p \sum_{l=1}^{r} h^{(r-m)p} \int_{t_{i-1}}^{t_{i+2}} \frac{2a}{h} \int_{\frac{h}{2a}}^{\frac{h}{a}} \left| \overrightarrow{\Delta}^m_{\frac{l \gamma^{-1}(\tau) \sqrt{t_{i-1}}}{r}} f(\xi) \right|^p w_{\alpha}^p(\xi) \, \mathrm{d} \tau \mathrm{d} \xi.
	\end{equation}
%	\begin{multline} \label{429}
%		\leq C^p \sum_{l=1}^{r} \frac{2a}{h} \int_{\frac{h}{2a}}^{\frac{h}{a}} \int_{t_{i-1}}^{t_{i+2}} h^{(r-m)p} \left| \overrightarrow{\Delta}^m_{\frac{l \gamma^{-1}(\tau) \sqrt{t_{i-1}}}{r}} f(\xi) \right|^p w_{\alpha}^p(\xi) \, \mathrm{d} \xi \mathrm{d} \tau = \\
%		= C^p \sum_{l=1}^{r} h^{(r-m)p} \int_{t_{i-1}}^{t_{i+2}} \frac{2a}{h} \int_{\frac{h}{2a}}^{\frac{h}{a}} \left| \overrightarrow{\Delta}^m_{\frac{l \gamma^{-1}(\tau) \sqrt{t_{i-1}}}{r}} f(\xi) \right|^p w_{\alpha}^p(\xi) \, \mathrm{d} \tau \mathrm{d} \xi.
%	\end{multline}
	In the inner integral let us change the variables $\frac{l \gamma^{-1}(\tau) \sqrt{t_{i-1}}}{r} =: \gamma^{-1}(\nu) \sqrt{\xi}$. We can do this similarly as above, because we chose $T$ properly. Similarly computing as previously, we get from (\ref{429}) that
	\begin{equation*} %\label{434}
		\widetilde{D}_{m,i} %\leq h^{rp} \| (F_{\gamma,h,i})^{(m)} \varphi^r w_{\alpha} \|^p_{L^p([t_{i-1},{t_{i+1}}])} %\leq \\
%		\leq C^p \sum_{l=1}^{r} h^{(r-m)p} \int_{t_{i-1}}^{t_{i+2}} \frac{2a}{h} \int_{0}^{\frac{h}{a}} \left| \overrightarrow{\Delta}_{\gamma,\varphi,\nu}^{m} f(\xi) \right|^p w_{\alpha}^p(\xi) \, \mathrm{d} \nu \mathrm{d} \xi = \\
		\leq r C^p h^{(r-m)p} \frac{2a}{h} \int_{0}^{\frac{h}{a}} \int_{t_{i-1}}^{t_{i+2}} \left| \overrightarrow{\Delta}_{\gamma,\varphi,\nu}^{m} f(\xi) \right|^p w_{\alpha}^p(\xi) \, \mathrm{d} \xi \mathrm{d} \nu,
	\end{equation*}
	by the Fubini theorem. %Becsüljük a belső integrált: a Jensen-egyenlőtlenség alapján
	As $h < t$, the sum of the first $r-1$ terms of (\ref{383}) expression:
	\begin{multline*} %\label{443}
		C^p \sum_{m=1}^{r-1} \widetilde{D}_{m,i} %= h^{rp} \| (F_{\gamma,h,i})^{(m)}\varphi^{r} w_{\alpha} \|^p_{L^p([t_{i-1},t_{i+1}])} \leq \\
%		\leq C^p \sum_{m=1}^{r-1} h^{(r-m)p} \frac{a}{h} \int_{0}^{\frac{h}{a}} \int_{t_{i-1}}^{t_{i+2}} |\overrightarrow{\Delta}_{\gamma,\varphi,\nu}^{m} f(x)|^p w_{\alpha}^p(x) \, \mathrm{d}x \mathrm{d} \nu < \\
		\leq C^p \sum_{m=1}^{r-1} \frac{a}{h} \int_{a}^{\frac{h}{a}} \int_{t_{i-1}}^{t_{i+2}} |t^{r-m} \overrightarrow{\Delta}_{\gamma,\varphi,\nu}^{m} f(x)|^p w_{\alpha}^p(x) \, \mathrm{d}x \mathrm{d} \nu \\
		\leq C^p \sum_{m=1}^{r-1} \frac{a}{h} \int_{0}^{\frac{h}{a}}\int_{t_{i-1}}^{t_{i+2}} \left( \sum_{n=1}^{r-1} |t^{r-n} \overrightarrow{\Delta}_{\gamma,\varphi,\nu}^{n} f(x)| \right)^p w_{\alpha}^p(x) \, \mathrm{d}x \mathrm{d} \nu \\
		= (r-1) C^p \frac{a}{h} \int_{0}^{\frac{h}{a}} \int_{t_{i-1}}^{t_{i+2}} \left( \sum_{n=1}^{r-1} |t^{r-n} \overrightarrow{\Delta}_{\gamma,\varphi,\nu}^{n} f(x)| \right)^p w_{\alpha}^p(x) \, \mathrm{d}x \mathrm{d}\nu.
	\end{multline*}
	These integrals are well defined, because now $1 \leq i \leq M+1$, so $i+2 \leq M+3 \leq j+1$.
	Looking to (\ref{415}) we get that if $i=1,\ldots,M+1$, then
	\begin{multline} \label{444}
		h^{rp} \| (F_{\gamma,h,i})^{(r)}_{\gamma} \varphi^{r} w_{\alpha} \|_{L^p([t_i,t_{i+1}])}^{p} \leq  h^{rp} \| (F_{\gamma,h,i})^{(r)}_{\gamma} \varphi^{r} w_{\alpha} \|_{L^p([t_{i-1},t_{i+1}])}^{p} \\
%		\leq C^p \frac{a}{h} \int_{0}^{\frac{h}{a}} \int_{t_{i-1}}^{t_{i+2}} \left( \sum_{n=1}^{r-1} |t^{r-n} \overrightarrow{\Delta}_{\gamma,\varphi,\nu}^{n} f(x)| \right)^p w_{\alpha}^p(x) \, \mathrm{d}x \mathrm{d}\nu + \\
%		+ C^p \frac{a}{h} \int_{0}^{\frac{h}{a}} \int_{t_{i-1}}^{t_{i+1}} \left| \overrightarrow{\Delta}_{\gamma,\varphi,\nu}^{r} f(x) \right|^p w_{\alpha}^p(x) \, \mathrm{d} x \, \mathrm{d} \nu \leq \\
		%\leq C^p \frac{a}{h} \int_{0}^{\frac{h}{a}} \int_{t_{i-1}}^{t_{i+3}} \left( |C(t,p) f(x)| + \left| \overrightarrow{\Delta}_{\gamma,\varphi,\nu}^{r} f(x) \right| \right)^p w_{\alpha}^p(x) \, \mathrm{d}x \mathrm{d}\nu + \\
		%+ C^p \frac{a}{h} \int_{0}^{\frac{h}{a}} \int_{t_{i-1}}^{t_{i+1}} \left( |C(t,p) f(x)| + \left| \overrightarrow{\Delta}_{\gamma,\varphi,\nu}^{r} f(x) \right| \right)^p w_{\alpha}^p(x) \, \mathrm{d} x \, \mathrm{d} \nu \leq \\
		\leq 2 C^p \frac{a}{h} \int_{0}^{\frac{h}{a}} \int_{t_{i-1}}^{t_{i+2}} \left( \sum_{n=1}^{r} |t^{r-n} \overrightarrow{\Delta}_{\gamma,\varphi,\nu}^{n} f(x)| \right)^p w_{\alpha}^p(x) \, \mathrm{d}x \mathrm{d}\nu,
	\end{multline}
	this is valid for $r=1$ too.

	Finally if $i=M+2,\ldots,j-1$, then by the statement of \ref{337} simply
	\begin{multline} \label{384}
		h^{rp} \| (F_{\gamma,h,i})^{(r)}_{\gamma} \varphi^{r} w_{\alpha} \|_{L^p([t_i,t_{i+1}])}^{p} \leq h^{rp} \| (F_{\gamma,h,i})^{(r)}_{\gamma} \varphi^{r} w_{\alpha} \|_{L^p([t_{i-1},t_{i+1}])}^{p} %\leq \\
		%\leq \frac{C^p}{\mathcal{C}_1^{rp}} h^{rp} \| (F_{\gamma,h,i})^{(r)} \varphi^{r} w_{\alpha} \|_{L^p([t_{i-1},t_{i+1}])}^{p} 
		\leq \frac{C^p}{\mathcal{C}_1^{rp}} J_{i} \\
		%\leq C^p \frac{a}{h} \int_{0}^{\frac{h}{a}} \int_{t_{i-1}}^{t_{i+1}} \left| \overrightarrow{\Delta}_{\gamma,\varphi,\nu}^{r} f(x) \right|^p w_{\alpha}^p(x) \, \mathrm{d} x \mathrm{d} \nu \leq \\
		\leq C^p \frac{a}{h} \int_{0}^{\frac{h}{a}} \int_{t_{i-1}}^{t_{i+1}} \left( \sum_{n=1}^{r} |t^{r-n} \overrightarrow{\Delta}_{\gamma,\varphi,\nu}^{n} f(x)| \right)^p w_{\alpha}^p(x) \, \mathrm{d} x \mathrm{d} \nu.
	\end{multline}
	Based on this using (\ref{148}) and (\ref{381}) we get that if $i=1,\ldots,j-1$, then
%	\begin{equation} \label{385}
%		h^{rp} \| (G_{\gamma,h})^{(r)}_{\gamma} \varphi^r w_{\alpha} \|_{L^p([t_i,t_{i+1}])}^{p} %\leq C^p \frac{a}{h} \int_{0}^{\frac{h}{a}} \int_{t_{i-1}}^{t_{i+2}} \left( \sum_{n=1}^{r} |t^{r-n} \overrightarrow{\Delta}_{\gamma,\varphi,\nu}^{n} f(x)| \right)^p w_{\alpha}^p(x) \, \mathrm{d} x \mathrm{d} \nu + \\
	%	+ C^p \frac{a}{h} \int_{0}^{\frac{h}{a}} \left( \int_{t_{i}}^{t_{i+2}} \left| \overrightarrow{\Delta}_{\gamma,\varphi,\nu}^{r} f(x) \right|^p w_{\alpha}^p(x) \, \mathrm{d} x + \int_{t_{i-1}}^{t_{i+1}} \left| \overrightarrow{\Delta}_{\gamma,\varphi,\nu}^{r} f(x) \right|^p w_{\alpha}^p(x) \, \mathrm{d} x \right) \, \mathrm{d} \nu \leq \\
%		\leq 3C^p \frac{a}{h} \int_{0}^{\frac{h}{a}} \int_{t_{i-1}}^{t_{i+2}} \left( \sum_{n=1}^{r} |t^{r-n} \overrightarrow{\Delta}_{\gamma,\varphi,\nu}^{n} f(x)| \right)^p w_{\alpha}^p(x) \, \mathrm{d} x  \mathrm{d} \nu,
%	\end{equation}
%	and if $i=M+2,\ldots,j-1$, then
	\begin{equation*} %\label{445}
		h^{rp} \| (G_{\gamma,h})^{(r)}_{\gamma} \varphi^r w_{\alpha} \|_{L^p([t_i,t_{i+1}])}^{p} %\leq C^p \frac{a}{h} \int_{0}^{\frac{h}{a}} \int_{t_{i-1}}^{t_{i+1}} \left( \sum_{n=1}^{r} |t^{r-n} \overrightarrow{\Delta}_{\gamma,\varphi,\nu}^{n} f(x)| \right)^p w_{\alpha}^p(x) \, \mathrm{d} x \mathrm{d} \nu + \\
	%	+ C^p \frac{a}{h} \int_{0}^{\frac{h}{a}} \left( \int_{t_{i}}^{t_{i+2}} \left| \overrightarrow{\Delta}_{\gamma,\varphi,\nu}^{r} f(x) \right|^p w_{\alpha}^p(x) \, \mathrm{d} x + \int_{t_{i-1}}^{t_{i+1}} \left| \overrightarrow{\Delta}_{\gamma,\varphi,\nu}^{r} f(x) \right|^p w_{\alpha}^p(x) \, \mathrm{d} x \right) \, \mathrm{d} \nu \leq \\
		\leq 3C^p \frac{a}{h} \int_{0}^{\frac{h}{a}} \int_{t_{i-1}}^{t_{i+2}} \left( \sum_{n=1}^{r} |t^{r-n} \overrightarrow{\Delta}_{\gamma,\varphi,\nu}^{n} f(x)| \right)^p w_{\alpha}^p(x) \, \mathrm{d} x  \mathrm{d} \nu.
	\end{equation*}

	If $i=0$, then by (\ref{164}), (\ref{383}) and estimation (\ref{444})
	\begin{equation*} %\label{386}
		h^{rp} \| (G_{\gamma,h})^{(r)}_{\gamma} \varphi^r w_{\alpha} \|_{L^p([t_0,t_{1}])}^{p} %\leq h^{rp} \| (F_{\gamma,h,1})^{(r)}_{\gamma} \varphi^r w_{\alpha} \|_{L^p([t_0,t_{2}])}^{p} \leq \\
		\leq 2 C^p \frac{a}{h} \int_{0}^{\frac{h}{a}} \int_{t_{0}}^{t_{3}} \left( \sum_{n=1}^{r} |t^{r-n} \overrightarrow{\Delta}_{\gamma,\varphi,\nu}^{n} f(x)| \right)^p w_{\alpha}^p(x) \, \mathrm{d} x \mathrm{d} \nu,
	\end{equation*}
	and if $i=j$, then by (\ref{165})
	\begin{equation*} %\label{387}
		h^{rp} \| (G_{\gamma,h})^{(r)}_{\gamma} \varphi^r w_{\alpha} \|_{L^p([t_j,t_{j+1}])}^{p} %\leq h^{rp} \| (F_{\gamma,h,j})^{(r)}_{\gamma} \varphi^r w_{\alpha} \|_{L^p([t_{j-1},t_{j+1}])}^{p} \leq \\
		\leq C^p \frac{a}{h} \int_{0}^{\frac{h}{a}} \int_{t_{j-1}}^{t_{j+1}} \left( \sum_{n=1}^{r} |t^{r-n} \overrightarrow{\Delta}_{\gamma,\varphi,\nu}^{n} f(x)| \right)^p w_{\alpha}^p(x) \, \mathrm{d} x \mathrm{d} \nu,
	\end{equation*}
	because the estimation (\ref{384}) is true in the case $i=j$ too. So let us return to (\ref{147}), then we get that
	\begin{multline*} %\label{388}
		h^{rp} \| (G_{\gamma,h})^{(r)}_{\gamma} \varphi^r w_{\alpha} \|_{L^p(I_{rh, \gamma})}^{p} %\leq \sum_{i=0}^{j} h^{rp} \| (G_{\gamma,h})^{(r)}_{\gamma} \varphi^r w_{\alpha} \|_{L^p([t_i,t_{i+1}])}^{p} \leq \\
	%	\leq 4 C^p \sum_{i=0}^{j} \frac{a}{h} \int_{0}^{\frac{h}{a}} \int_{t_{i}}^{t_{i+1}} \left( \sum_{n=1}^{r} |t^{r-n} \overrightarrow{\Delta}_{\gamma,\varphi,\nu}^{n} f(x)| \right)^p w_{\alpha}^p(x) \, \mathrm{d} x \mathrm{d}\nu = \\
		\leq 4 C^p \frac{a}{h} \int_{0}^{\frac{h}{a}} \int_{t_0}^{t_{j+1}} \left( \sum_{n=1}^{r} |t^{r-n} \overrightarrow{\Delta}_{\gamma,\varphi,\nu}^{n} f(x)| \right)^p w_{\alpha}^p(x) \, \mathrm{d} x \mathrm{d}\nu \\
		\leq C^p \frac{a}{h} \int_{0}^{\frac{h}{a}} \left\| w_{\alpha} \sum_{n=1}^{r} |t^{r-n} \overrightarrow{\Delta}_{\gamma,\varphi,\nu}^{n} f| \right\|_{L^p(I_{rh,\gamma}')}^p \mathrm{d}\nu %\leq \\
	%	\leq C^p \frac{a}{h} \int_{0}^{\frac{h}{a}} \left\| w_{\alpha} \sum_{n=1}^{r} |t^{r-n} \overrightarrow{\Delta}_{\gamma,\varphi,\nu}^{n} f| \right\|_{L^p(I_{r \nu,\gamma}')}^p \, \mathrm{d} \nu \leq \\
	%	\leq C^p \frac{a}{h} \int_{0}^{\frac{h}{a}} \sup_{0 < \nu \leq t} \left\| w_{\alpha} \sum_{n=1}^{r} |t^{r-n} \overrightarrow{\Delta}_{\gamma,\varphi,\nu}^{n} f| \right\|_{L^p(I_{r \nu,\gamma}')}^p \, \mathrm{d} \nu \leq \\
		\leq C^p \sup_{0 < h \leq t} \left\| w_{\alpha} \sum_{n=1}^{r} |t^{r-n} \overrightarrow{\Delta}_{\gamma,\varphi,h}^{n} f| \right\|_{L^p(I_{rh,\gamma}')}^p.
	\end{multline*}
	From this we get that for all $0 < h \leq t$ the inequality (\ref{144}) holds, which we wanted to prove.
	Since the constant in the estimations is independent of $p$, (\ref{144}) %, and respectively (\ref{392a}) 
	is also true if $p=\infty$.
	\end{lepes}

	Finally by definiton (\ref{14}), by (\ref{62}) and (\ref{144}) for all $1 \leq p \leq \infty$
	\begin{equation*} %\label{390a}
		\widetilde{K}_{\gamma,r,\varphi}(f,t^r,L^p_{w_{\alpha}},W_{\gamma}^{r,p}) %\leq \\
%		\leq \sup_{0 < h \leq t} \left\{ \| (f-G_{\gamma,h}) w_{\alpha} \|_{L^p(I_{rh,\gamma})} + h^r \| (G_{\gamma,h})^{(r)}_{\gamma} \varphi^r w_{\alpha} \|_{L^p(I_{rh,\gamma})} \right\} \leq \\
		\leq C \Omega_{\gamma,\varphi}^{r} (f,t)_{w_{\alpha},p} + C \sum_{n=1}^{r} t^{r-n} \Omega_{\gamma,\varphi}^{n} (f,t)_{w_{\alpha},p}, %\leq C \sum_{n=1}^{r} t^{r-n} \Omega_{\gamma,\varphi}^{n} (f,t)_{w_{\alpha},p},
	\end{equation*}
	from which the inequality (\ref{51}) follows.

%	Végül, ha $p = \infty$: $r=1$ esetben a kapott (\ref{390}) becslésben szereplő konstans nem függ $p$-től, így az állítás $p = \infty$-re is igaz. Ha $r \geq 2$, akkor a konstans szorzók között van $p$-től függő, $C(t,p)$ helyett viszont becsülhetünk $p$-től függetlenül is, ugyanis
%	\begin{equation} \label{450}
%		\frac{t^p - t^{rp}}{1 - t^p } = t^p \frac{1 - t^{(r-1)p}}{1 - t^p},
%	\end{equation}
%	és mivel $t < \frac{1}{2}$, ezért $p \geq 1$ miatt $t < \frac{1}{2} \leq \left( \frac{1}{2} \right)^{\frac{1}{p}}$, amivel ekvivalens, hogy $\frac{1}{1 - t^p} < 2$. Így
%	\begin{equation} \label{451}
%		t^p \frac{1 - t^{(r-1)p}}{1 - t^p} < t^p \frac{1}{1 - t^p} < 2 t^p \leq 2^p t^p.
%	\end{equation}
%	(\ref{251})-ben $2^p$-t a $C^p$ szorzóba olvasztjuk, $C(t,p) := (t^p)^{\frac{1}{p}} = t$ pedig már $p$-től független, és ezzel a korábbiaknak megfelelően becsülve $C(t,p) = t$ jelenik meg (\ref{390a})-ban is. Az ily módon kapott becslésben a konstansok nem függenek $p$-től, így $p=\infty$-re is igaz egyenlőtlenséget kaptunk, mely éppen (\ref{448}) állítás.
	
\begin{lepes} \label{576}	In the further section of the proof we get a lower bound for $\widetilde{K}_{\gamma,r,\varphi}(f,t^r,L^p_{w_{\alpha}},W_{\gamma}^{r,p})$, which proves inequality (\ref{391}). We investigate also the $I_{rh,\gamma}$ interval,
%	\begin{equation} \label{457}
%		I_{rh,\gamma} = \left[ 4A_1 r^2 (\gamma^{-1}(h))^2, \frac{A_2}{(\gamma^{-1}(h))^2} \right]
%	\end{equation}
%	in\-ter\-val\-lu\-mot tekintjük,
where now $\frac{1}{4} < A_1$, $0 < A_2 < \frac{a_1^2}{4}$. Let
	\begin{equation} \label{358a}
		0 < h \leq t < T < \min \left\{ \textstyle \gamma \left( \sqrt[4]{\frac{A_2}{A_1}} \frac{1}{\sqrt{2r}} \right), \gamma(a_1) \displaystyle \right\}.
	\end{equation}
	At first let $1 < p < \infty$, and let $g \in W_{\gamma^{r,p}}(I_{rh,\gamma}')$ a function such that
	\begin{equation*} %\label{356a}
		\sup_{0 < h \leq t} \left\{ \| (f-g) w_{\alpha} \|_{L^p(I_{rh,\gamma}')} + h^r \| g^{(r)}_{\gamma} \varphi^{r} w_{\alpha} \|_{L^p(I_{rh,\gamma}')} \right\} \leq 2 \widetilde{K}_{\gamma,r,\varphi}(f,t^r,L^p_{w_{\alpha}},W_{\gamma}^{r,p}).
	\end{equation*}
	Then $g \in W_{\gamma^{r,p}}(I_{rh,\gamma})$ is also true, because $I_{rh,\gamma} \subset I_{rh,\gamma}'$. By the linearity of operator $\overrightarrow{\Delta}_{\gamma,\varphi,h}^r$ and the triangle inequality
	\begin{equation} \label{357a}
		\| w_{\alpha} \overrightarrow{\Delta}_{\gamma,\varphi,h}^r f \|_{L^p(I_{rh,\gamma})} \leq \underbrace{\| w_{\alpha} \overrightarrow{\Delta}_{\gamma,\varphi,h}^r (f-g) \|_{L^p(I_{rh,\gamma})}}_{N_1} + \underbrace{\| w_{\alpha} \overrightarrow{\Delta}_{\gamma,\varphi,h}^r g \|_{L^p(I_{rh,\gamma})}}_{N_2}.
	\end{equation}
	Let us estimate $N_1$: by using the Jensen inequality and lemma \ref{37}:
%	\begin{equation} \label{359a}
%		N_1^p \leq (r+1)^{p-1} C_{7}^p \sum_{k=0}^{r} \int_{4A_1 r^2 (\gamma^{-1}(h))^2}^{\frac{A_2}{(\gamma^{-1}(h))^2}} w_{\alpha}^p(x) \left| (f-g)(x+k \gamma^{-1}(h) \sqrt{x}) \right|^p \, \mathrm{d} x
%	\end{equation}
	%\begin{multline} \label{359a}
	%	N_1^p = \int_{4A_1 r^2 (\gamma^{-1}(h))^2}^{\frac{A_2}{\gamma^{-1}(h)^2}} \left| w_{\alpha}(x) \sum_{k=0}^{r} (-1)^{r-k} {r \choose k} (f-g)(x + k \gamma^{-1}(h) \sqrt{x}) \right|^p \, \mathrm{d} x \leq \\
	%	\leq (r+1)^{p-1} \int_{4A_1 r^2 (\gamma^{-1}(h))^2}^{\frac{A_2}{\gamma^{-1}(h)^2}} w_{\alpha}^p(x) \sum_{k=0}^{r} {r \choose k}^p  \left| (f-g)(x + k \gamma^{-1}(h) \sqrt{x}) \right|^p \, \mathrm{d} x \leq \\
	%	\leq (r+1)^{p-1} C_{7}^p \sum_{k=0}^{r} \int_{4A_1 r^2 (\gamma^{-1}(h))^2}^{\frac{A_2}{(\gamma^{-1}(h))^2}} w_{\alpha}^p(x) \left| (f-g)(x+k \gamma^{-1}(h) \sqrt{x}) \right|^p \, \mathrm{d} x,
	%\end{multline}
%	where $C_{7} := \max_{k=0,\ldots,r} {r \choose k}$. %Mivel $x \in I_{rh,\gamma}$, és
%	\begin{equation} \label{458}
%		x + k \gamma^{-1}(h) \sqrt{x} \in [x - r \gamma^{-1}(h) \sqrt{x}, x + r \gamma^{-1}(h) \sqrt{x}],
%	\end{equation}
%	By using the lemma \ref{37} we get from this
	\begin{equation*} %\label{360a}
		N_1^p %\leq C^p \sum_{k=0}^{r} \int_{4A_1 r^2 (\gamma^{-1}(h))^2}^{\frac{A_2}{(\gamma^{-1}(h))^2}} w_{\alpha}^p(x + k \gamma^{-1}(h) \sqrt{x}) \left| (f-g)(x+k \gamma^{-1}(h) \sqrt{x}) \right|^p \, \mathrm{d} x = \\
		\leq C^p \sum_{k=0}^{r} \int_{4A_1 r^2 (\gamma^{-1}(h))^2}^{\frac{A_2}{(\gamma^{-1}(h))^2}} \left| (f-g) w_{\alpha} (x+k \gamma^{-1}(h) \sqrt{x}) \right|^p \, \mathrm{d} x.
	\end{equation*}
	Let us change the variables $x + k \gamma^{-1}(h) \sqrt{x} =: y$, then
%	\begin{equation} \label{393}
%		x = \frac{1}{4} \left( - k \gamma^{-1}(h) + \sqrt{k^2 (\gamma^{-1}(h))^2 + 4y} \right)^2,
%	\end{equation}
%	így
%	\begin{equation} \label{394}
%		\frac{\mathrm{d}x}{\mathrm{d}y} = 1 - \frac{k \gamma^{-1}(h)}{\sqrt{k^2 (\gamma^{-1}(h))^2 + 4y}} < 1,
%	\end{equation}
%	vagyis (\ref{360a}) alapján
	\begin{equation} \label{395}
		N_1^p %\leq C^p \sum_{k=0}^{r} \int_{4A_1 r^2 (\gamma^{-1}(h))^2 + k (\gamma^{-1}(h))^2 2 \sqrt{A_1} r}^{\frac{A_2}{(\gamma^{-1}(h))^2} + k \sqrt{A_2}} w_{\alpha}^p(y) |(f-g)(y)|^p \left( 1 - \frac{k \gamma^{-1}(h)}{\sqrt{k^2 (\gamma^{-1}(h))^2 + 4y}} \right) \, \mathrm{d}y < \\
		< C^p \sum_{k=0}^{r} \int_{4A_1 r^2 (\gamma^{-1}(h))^2}^{\frac{A_2 \frac{1 + 2 \sqrt{A_1}}{2 \sqrt{A_1}}}{(\gamma^{-1}(h))^2}} w_{\alpha}^p(y) |(f-g)(y)|^p \, \mathrm{d}y = C^p r \| (f-g) w_{\alpha} \|_{L^p(I_{rh,\gamma}')}^p, 
	\end{equation}
	where in the last step we used that we integrated on a greater domain, because of the appropriate choosing of $h$ $k=0,\ldots,r$
	\begin{equation*} %\label{396}
		\textstyle \frac{A_2}{(\gamma^{-1}(h))^2} + k \sqrt{A_2} \leq \frac{A_2 \left( 1 + r \frac{1}{\sqrt{A_2}} (\gamma^{-1}(h))^2 \right)}{(\gamma^{-1}(h))^2} \leq \frac{A_2 \left( 1 + \frac{1}{2 \sqrt{A_1}} \right)}{(\gamma^{-1}(h))^2}.
	\end{equation*}
	hold for all $k=0,\ldots,r$. For the estimation of $N_2$ let us use the following identity, which can be proved with induction:
	\begin{equation} \label{397}
		\overrightarrow{\Delta}_{\gamma,\varphi,h}^r g(x) %= \int_{0}^{\gamma^{-1}(h) \varphi(x)} \cdots \int_{0}^{\gamma^{-1}(h) \varphi(x)} g^{(r)}_{\gamma}(x + u_1 + \ldots + u_r) \, \mathrm{d} \gamma(u_1) \ldots \mathrm{d} \gamma(u_r) = \\
		= \int_{0}^{\gamma^{-1}(h) \varphi(x)} \cdots \int_{0}^{\gamma^{-1}(h) \varphi(x)} g^{(r)}_{\gamma}(x + u_1 + \ldots + u_r) \prod_{i=1}^{r} \gamma'(u_i) \, \mathrm{d} u_1 \ldots \mathrm{d}u_r.
	\end{equation}
	This latter integral is well defined, because now $x \in I_{rh,\gamma}$, and for all of the integrating variables
	\begin{equation*} %\label{398}
		\textstyle 0 < u_i < \gamma^{-1}(h) \sqrt{x} \leq \gamma^{-1}(h) \frac{A_2}{\gamma^{-1}(h)} = \sqrt{A_2} < \frac{a_1}{2},
	\end{equation*}
	and $\gamma'(u_i)$ is finite on $\left[ 0,\frac{a_1}{2} \right]$. %Similary proceeding as in (\ref{137a}), 
	So we get from (\ref{397}) that
%	\begin{equation} \label{399}
%		| \overrightarrow{\Delta}_{\gamma,\varphi,h}^r g(x) | \leq \int_{0}^{\gamma^{-1}(h) \varphi(x)} \cdots \int_{0}^{\gamma^{-1}(h) \varphi(x)} | g^{(r)}_{\gamma}(x + u_1 + \ldots + u_r) | \prod_{i=1}^{r} |\gamma'(u_i)| \, \mathrm{d} u_1 \ldots \mathrm{d}u_r \leq
%	\end{equation}
%	Mivel $\gamma'$ pozitív és monoton nő $(0,a_1)$-en, az integrálási tartomány pedig része ennek az intervallumnak,
%	\begin{equation} \label{137}
%		\gamma' (u_i) \leq \gamma' \left( \frac{a_1}{2} \right) = \sum_{k=1}^{N} \beta_k \left( a_k - \frac{a_1}{2} \right)^{\beta_k - 1}
%	\end{equation}
%	teljesül minden $i=1,\ldots,r$ esetén, így (\ref{399}) tovább becsülhető:
	\begin{equation*} %\label{400}
		| \overrightarrow{\Delta}_{\gamma,\varphi,h}^r g(x) | \leq %\underbrace{\left( \sum_{k=1}^{N} \beta_k \left( a_k - \frac{a_1}{2} \right)^{\beta_k - 1} \right)^r}_{=: C_{8}}
		C \int_{0}^{\gamma^{-1}(h) \varphi(x)} \cdots \int_{0}^{\gamma^{-1}(h) \varphi(x)} | g^{(r)}_{\gamma}(x + u_1 + \ldots + u_r) | \, \mathrm{d} u_1 \ldots \mathrm{d}u_r.
	\end{equation*}
	%Similarly as in the lemma \ref{232}, 
	Let $u = u_1 + \ldots + u_r$, then since
	\begin{multline*} %\label{401}
		\{ (u_1,\ldots,u_r) \in \mathds{R}^r \mid 0 < u_i < \gamma^{-1}(h) \varphi(x), i = 1,\ldots,r \} \\
		\subset \{ (u_1,\ldots,u_r) \in \mathds{R}^r \mid 0 < u_1 + \ldots + u_r < r \gamma^{-1}(h) \varphi(x), 0 < u_i, i=1,\ldots,r \} =: H,
	\end{multline*}
%	és
%	\begin{equation} \label{402}
%		H = \bigcup_{0 < u < r \gamma^{-1}(h) \varphi(x)} \underbrace{\{ (u_1,\ldots,u_r) \in \mathds{R}^r \mid u_1 + \ldots + u_r = u, 0 < u_i, i=1,\ldots,r \}}_{H_u},
%	\end{equation}
	we get
	\begin{multline} \label{403}
		| \overrightarrow{\Delta}_{\gamma,\varphi,h}^r g(x) | %\leq  \int_{0}^{\gamma^{-1}(h) \varphi(x)} \cdots \int_{0}^{\gamma^{-1}(h) \varphi(x)} | g^{(r)}_{\gamma}(x + u_1 + \ldots + u_r) | \, \mathrm{d} u_1 \ldots \mathrm{d}u_r \leq \\
		\leq C \int_H  | g^{(r)}_{\gamma}(x + u_1 + \ldots + u_r) | \, \mathrm{d} u_1 \ldots \mathrm{d}u_r \\ %= \int_{0}^{r \gamma^{-1}(h) \varphi(x)} \int_{H_u} |g^{r}_{\gamma}(x+u)| \, \mathrm{d} \lambda_{r-1} \mathrm{d}u = \\
		%= \int_{0}^{r \gamma^{-1}(h) \varphi(x)} |g^{(r)}_{\gamma} ( x+u)| \lambda(H_u) \, \mathrm{d}u 
		\leq C \lambda(H) \int_{0}^{r \gamma^{-1}(h) \varphi(x)} |g^{(r)}_{\gamma} ( x+u )| \mathrm{d}u %= \\
		= \frac{C (r \gamma^{-1}(h) \varphi(x))^r}{r!} \int_{0}^{r \gamma^{-1}(h) \varphi(x)} |g^{(r)}_{\gamma} ( x+u )| \mathrm{d}u,
	\end{multline}
	because $H$ is an $r$-dimensional simplex, and its volume is $\lambda(H) = \frac{(r \gamma^{-1}(h) \varphi(x))^r}{r!}$. Since
	\begin{equation} \label{404}
		(\gamma^{-1}(h))^r (\varphi(x))^r %= (\gamma^{-1}(h))^{r-1} (\varphi(x))^{r-1} \gamma^{-1}(h) \varphi(x) \leq \\
		\leq (\gamma^{-1}(h))^{r-1} (\varphi(x))^{r-1} \gamma^{-1}(h) \textstyle \frac{\sqrt{A_2}}{\gamma^{-1}(h)} = \sqrt{A_2} (\gamma^{-1}(h))^{r-1} (\varphi(x))^{r-1},
	\end{equation}
	by %(\ref{399}), (\ref{400}) és
	(\ref{403}) there exists a constant $C$ such that
	\begin{equation} \label{405}
		| \overrightarrow{\Delta}_{\gamma,\varphi,h}^r g(x) | \leq C (\gamma^{-1}(h))^{r-1} (\varphi(x))^{r-1} \int_{0}^{r \gamma^{-1}(h) \varphi(x)} |g^{(r)}_{\gamma} ( x+u )| \mathrm{d}u.
	\end{equation}
%	Eszerint azt kapjuk, hogy
%	\begin{multline} \label{406}
%		N_2 = \left( \int_{4 A_1 r^2 (\gamma^{-1}(h))^2}^{\frac{A_2}{(\gamma^{-1}(h))^2}} w_{\alpha}^p(x) | \overrightarrow{\Delta}_{\gamma,\varphi,h}^r g(x) |^p \, \mathrm{d}x \right)^{\frac{1}{p}} \leq \\
%		\leq \left( \int_{4 A_1 r^2 (\gamma^{-1}(h))^2}^{\frac{A_2}{(\gamma^{-1}(h))^2}} \left( C (\gamma^{-1}(h))^{r-1} (\varphi(x))^{r-1} \int_{0}^{r \gamma^{-1}(h) \varphi(x)} |w_{\alpha}(x) g^{(r)}_{\gamma} ( x+u )| \mathrm{d}u \right)^p \, \mathrm{d}x \right)^{\frac{1}{p}} \leq \\
%		\leq C (\gamma^{-1}(h))^r \left(  \int_{4 A_1 r^2 (\gamma^{-1}(h))^2}^{\frac{A_2}{(\gamma^{-1}(h))^2}} \left( \textstyle \frac{1}{r \gamma^{-1}(h) \varphi(x)} \displaystyle \int_{0}^{r \gamma^{-1}(h) \varphi(x)} |w_{\alpha}(x) g^{(r)}_{\gamma} ( x+u )| \varphi^{r}(x) \mathrm{d}u \right)^p \, \mathrm{d}x  \right)^{\frac{1}{p}} \leq
%	\end{multline}
	Moreover $0 < u < r \gamma^{-1}(h) \sqrt{x}$, thus $x < x + u < x + r \gamma^{-1}(h) \sqrt{x}$,
%	\begin{equation} \label{407}
%		x < x + u < x + r \gamma^{-1}(h) \sqrt{x},
%	\end{equation}
	from which we get $x \sim x+ u$. Furthermore $x \in I_{rh,\gamma}$, so by lemma \ref{37} $w_{\alpha}(x) \sim w_{\alpha}(x+u)$. Similarly, since $x \sim x + u$, $\varphi(x) \sim \varphi(x+u)$ is also true. With these remarks, changing the variables $u + x =: y$, we get from the definition of $N_2$ and (\ref{405}):
	\begin{equation*} %\label{408}
%		\leq C_{33} (\gamma^{-1}(h))^r \left(  \int_{4 A_1 r^2 (\gamma^{-1}(h))^2}^{\frac{A_2}{(\gamma^{-1}(h))^2}} \left( \frac{1}{r \gamma^{-1}(h) \varphi(x)}  \int_{0}^{r \gamma^{-1}(h) \varphi(x)} |w_{\alpha}(x+u) g^{(r)}_{\gamma} ( x+u )| \varphi^{r}(x+u) \mathrm{d}u \right)^p \, \mathrm{d}x  \right)^{\frac{1}{p}} = \\
	N_2	\leq C (\gamma^{-1}(h))^r \left(  \int_{4 A_1 r^2 (\gamma^{-1}(h))^2}^{\frac{A_2}{(\gamma^{-1}(h))^2}} \left( \textstyle \frac{1}{r \gamma^{-1}(h) \varphi(x)} \displaystyle \int_{x}^{x+ r \gamma^{-1}(h) \varphi(x)} |w_{\alpha}(y) g^{(r)}_{\gamma} (y)| \varphi^{r}(y) \mathrm{d}y \right)^p \, \mathrm{d}x  \right)^{\frac{1}{p}}.
	\end{equation*}
	In the resulting expression let us use %the lemma \ref{409}, that is 
	the Hardy--Littlewood maximal inequality for the $w_{\alpha} g^{(r)}_{\gamma} \varphi^r \chi_{I_{rh,\gamma}}$ function, and we get that
	\begin{equation} \label{412}
		N_2 \leq C(p) h^{r} \| w_{\alpha} g^{(r)}_{\gamma} \varphi^r \|_{L^p(I_{rh,\gamma}')},
	\end{equation}
%	\begin{multline} \label{412}
%		N_2 \leq C(p) (\gamma^{-1}(h))^r \| w_{\alpha} g^{(r)}_{\gamma} \varphi^r \|_{L^p(I_{rh,\gamma}')} \leq \\
%		\leq C(p) \left( \frac{1}{\sum_{k=1}^{N} \beta_k a_k^{\beta_k - 1}} \right)^r h^{r} \| w_{\alpha} g^{(r)}_{\gamma} \varphi^r \|_{L^p(I_{rh,\gamma}')},
%	\end{multline}
	because $h < \gamma(a_1)$, so we can use the estimation (\ref{200}), and $I_{rh,\gamma} \subset I_{rh,\gamma}'$. So by (\ref{357a}), (\ref{395}) and (\ref{412}) we get that for all $0 < h \leq t$
	\begin{multline*} %\label{413}
		\| w_{\alpha} \overrightarrow{\Delta}_{\gamma,\varphi,h}^{r} f \|_{L^p(I_{rh,\gamma})} \leq C \left( \| (f-g) w_{\alpha} \|_{L^p(I_{rh,\gamma}')} + h^r \| \varphi^r g^{(r)}_{\gamma} w_{\alpha} \|_{L^p(I_{rh,\gamma}')} \right) \\
		\leq C \sup_{0 < h \leq t} \left\{ \| (f-g) w_{\alpha} \|_{L^p(I_{rh,\gamma}')} + h^r \| \varphi^r g^{(r)}_{\gamma} w_{\alpha} \|_{L^p(I_{rh,\gamma}')} \right\} \\
		\leq C \widetilde{K}_{\gamma,r,\varphi}(f,t^r,L^p_{w_{\alpha}},W_{\gamma}^{r,p}).
	\end{multline*}
	Since this estimation is valid for all $0 < h \leq t$ we get the inequality (\ref{391}), if $1 < p < \infty$, which we wanted to prove.
If $p = 1$, then we can do similarly as above, but this case is simpler: we dont need (\ref{404}) transformation and we can prove the statement without the Hardy--Littlewood maximal intequality.
\end{lepes}
	
	If $p = \infty$, then with the corresponding modifications of the norm we get the statement (\ref{391}), because the Hardy--Littlewood maximal inequality is true for $p = \infty$ too.
	\end{proof}

%\Aref{481}. tétel bizonyítása:
\begin{proof}[Proof of theorem \ref{481}]
	We prove also in several steps. 
	\begin{lepess} \label{580}
		We get an upper bound for $K_{\gamma,r,\varphi}(f,t^r)$. Let $A_1,A_2$ constants denote the same as in the first section of the proof of theorem \ref{50} (when we proved (\ref{51})), and let $0 < t \leq T$ be small enough so that it satisfies assumption (\ref{250}), and the following:
	\begin{equation*} %\label{497}
		T < \min \left\{ \textstyle \gamma(a_1), \gamma \left( \sqrt{\frac{a_1}{4 r^2 \sqrt{A_1} (1 + 2 \sqrt{A_1})}} \right) \displaystyle \right\}.
	\end{equation*}
	Let as previously
	\begin{equation*} %\label{484}
		I_{rt,\gamma} = \left[4A_1r^2 (\gamma^{-1}(t))^2, \frac{A_2}{(\gamma^{-1}(t))^{2}} \right],
	\end{equation*}
	end let us consider the system of points $t_0,\ldots,t_j,t_{j+1}$ such that
	\begin{equation*} %\label{485}
		t_0 := 4A_1r^2 (\gamma^{-1}(t))^2 < t_1 < t_2 < \ldots t_j < \frac{A_2}{(\gamma^{-1}(t))^{2}} \leq t_{j+1} =: \frac{A_2A}{(\gamma^{-1}(t))^{2}},
	\end{equation*}
	and
	\begin{equation} \label{486}
		\frac{1}{2 r} \leq \frac{t_{i+1} - t_i}{\gamma^{-1}(t) \sqrt{t_i}} \leq r, \ \ \ i = 0, \ldots, j.
	\end{equation}
	$I_{rt,\gamma}'' := [t_0,t_{j+1}]$ is an $I_{rt,\gamma}$-type interval (we can say the same thing as above with the partition with $h$). We will show that there exists a constant $C$ such that
	\begin{multline} \label{487}
		K_{\gamma,r,\varphi}(f,t^r,L^p_{w_{\alpha}},W_{\gamma}^{r,p}) \\
		\leq C K_{\gamma,r,\varphi}(f,t^r,L^p_{w_{\alpha}}(0,4 A_1' r^2 (\gamma^{-1}(t))^2),W_{\gamma}^{r,p}(0,4 A_1' r^2 (\gamma^{-1}(t))^2)) \\
		+ C K_{\gamma,r,\varphi}(f,t^r,L^p_{w_{\alpha}}(I_{rt,\gamma}''),W_{\gamma}^{r,p}(I_{rt,\gamma}'')) \\
		+ C K_{\gamma,r,\varphi} \left(f,t^r,L^p_{w_{\alpha}}\left( \textstyle \frac{A_2'}{(\gamma^{-1}(t))^2},\infty \right),W_{\gamma}^{r,p}\left( \textstyle \frac{A_2'}{(\gamma^{-1}(t))^2},\infty \right) \right),
	\end{multline}
	where $A_1 = \frac{1}{2} \sqrt{A_1} (1 + 2 \sqrt{A_1})$ and $A_2' = \frac{2 \sqrt{A_1} A_2}{1 + 2 \sqrt{A_1}}$. For this let $G_{\gamma,t}$ be the function given by (\ref{61}) (we repeat the former constuction with $h = t$ now). Furthermore let $g_1 \in W_{\gamma}^{r,p}(0,t_1)$, $g_2 \in W_{\gamma}^{r,p}(t_0,t_{j+1})$, $g_3 \in W_{\gamma}^{r,p}(t_j,\infty)$ be arbitrary functions, and let
	\begin{multline} \label{488}
		\textstyle g(x) := \left( 1 - \psi \left( \frac{\gamma(x) - \gamma(t_0)}{\gamma(t_1) - \gamma(t_0)} \right) \right) g_1(x) \\
		\textstyle + \psi \left( \frac{\gamma(x) - \gamma(t_0)}{\gamma(t_1) - \gamma(t_0)} \right) \left( 1 - \psi \left( \frac{\gamma(x) - \gamma(t_j)}{\gamma(t_{j+1}) - \gamma(t_j)} \right) \right) g_2(x) %+ \\
		\textstyle + \psi \left( \frac{\gamma(x) - \gamma(t_j)}{\gamma(t_{j+1}) - \gamma(t_j)} \right) g_3(x),
	\end{multline}
	where $\psi$ is the function given by (\ref{56}). %\told(\ref{488})+val ekvivalens, hogy
%	\begin{equation} \label{489}
%		g(x) = \left\{
%			\begin{array}{rl}
%				g_1(x), & \text{ha } x \leq t_0, \\
%				\left( 1 - \psi \left( \frac{\gamma(x) - \gamma(t_0)}{\gamma(t_1) - \gamma(t_0)} \right) \right) g_1(x) + \psi \left( \frac{\gamma(x) - \gamma(t_0)}{\gamma(t_1) - \gamma(t_0)} \right) g_2(x), & \text{ha } t_0 \leq x \leq t_1, \\
%				g_2(x), & \text{ha } t_1 \leq x \leq t_{j}, \\
%				\left( 1 - \psi \left( \frac{\gamma(x) - \gamma(t_j)}{\gamma(t_{j+1}) - \gamma(t_j)} \right) \right) g_2(x) + \psi \left( \frac{\gamma(x) - \gamma(t_j)}{\gamma(t_{j+1}) - \gamma(t_j)} \right) g_3(x), & \text{ha } t_j \leq x \leq t_{j+1}, \\
%				g_3(x), & \text{ha } t_{j+1} \leq x.
%			\end{array}
%		\right.
%	\end{equation}
	For this $g$ function:
	\begin{multline} \label{490}
		\| (f-g) w_{\alpha} \|_{L^p} %\leq \| (f-g) w_{\alpha} \|_{L^p(0,t_j)} + \| (f-g) w_{\alpha} \|_{L^p(t_j,\infty)} \leq \\
		\leq \| (f-g) w_{\alpha} \|_{L^p(0,t_j)} + \| (f-g) w_{\alpha} \|_{L^p(t_1,\infty)} \\
	%	\leq \| (f-g_1) w_{\alpha} \|_{L^p(0,t_1)} + \| (f-g_2) w_{\alpha} \|_{L^p(t_0,t_j)} + \\
	%	+ \| (f-g_2) w_{\alpha} \|_{L^p(t_1,t_{j+1})} + \| (f-g_3) w_{\alpha} \|_{L^p(t_j,\infty)} \leq \\
		\leq 2 \left( \| (f-g_1) w_{\alpha} \|_{L^p(0,t_1)} + \| (f-g_2) w_{\alpha} \|_{L^p(t_0,t_{j+1})} + \| (f-g_3) w_{\alpha} \|_{L^p(t_j,\infty)} \right).
	\end{multline}
	Similarly,
	\begin{multline} \label{491}
		t^r \| g^{(r)}_{\gamma} \varphi^{r} w_{\alpha} \|_{L^p} %\leq t^r \| g^{(r)}_{\gamma} \varphi^{r} w_{\alpha} \|_{L^p(0,t_0)} + t^r \| g^{(r)}_{\gamma} \varphi^{r} w_{\alpha} \|_{L^p(t_0,t_1)} + \\
	%	+ t^r \| g^{(r)}_{\gamma} \varphi^{r} w_{\alpha} \|_{L^p(t_1,t_j)} + t^r \| g^{(r)}_{\gamma} \varphi^{r} w_{\alpha} \|_{L^p(t_j,t_{j+1})} + t^r \| g^{(r)}_{\gamma} \varphi^{r} w_{\alpha} \|_{L^p(t_{j+1},\infty)} = \\
		\leq t^r \| (g_1)^{(r)}_{\gamma} \varphi^{r} w_{\alpha} \|_{L^p(0,t_1)} + t^r \| (g_2)^{(r)}_{\gamma} \varphi^{r} w_{\alpha} \|_{L^p(t_0,t_{j+1})} \\
		+ t^r \| (g_3)^{(r)}_{\gamma} \varphi^{r} w_{\alpha} \|_{L^p(t_j,\infty)} + t^r \| g^{(r)}_{\gamma} \varphi^{r} w_{\alpha} \|_{L^p(t_0,t_1)} + t^r \| g^{(r)}_{\gamma} \varphi^{r} w_{\alpha} \|_{L^p(t_j,t_{j+1})}.
	\end{multline}
	The last two terms can be estimated with the norms of the $\gamma$-relative derivatives of $g_1,g_2$ and $g_3$ too, we do this for example with the first term, the estimation of the second is analogous. If $x \in [t_0,t_1]$, then from (\ref{488}) we get
	\begin{equation*} %\label{492}
		g(x) = g_1(x) + \psi \left( \frac{\gamma(x) - \gamma(t_0)}{\gamma(t_1) - \gamma(t_0)} \right) (g_2(x) - g_1(x)),
	\end{equation*}
	so by statement \ref{140} and statement \ref{25}:
	\begin{equation*} %\label{493}
		g_{\gamma}^{(r)}(x) = (g_1)_{\gamma}^{(r)}(x) + \sum_{k=0}^{r} \binom{r}{k} \frac{(g_2)^{(k)}_{\gamma}(x) - (g_1)^{(k)}_{\gamma}(x)}{(\gamma(t_1) - \gamma(t_0))^{r-k}} \psi^{(r-k)} \left( \frac{\gamma(x) - \gamma(t_0)}{\gamma(t_1) - \gamma(t_0)} \right),
	\end{equation*}
	from which by (\ref{264}) we get
	\begin{multline} \label{494}
		t^r \| g^{(r)}_{\gamma} \varphi^{r} w_{\alpha} \|_{L^p(t_0,t_1)} \leq t^r \| (g_1)^{(r)}_{\gamma} \varphi^{r} w_{\alpha} \|_{L^p(t_0,t_1)} \\
		+ C \sum_{k=0}^{r} \binom{r}{k} \frac{t^r}{(\gamma(t_1) - \gamma(t_0))^{r-k}} \| (g_2 - g_1)^{(k)}_{\gamma} \varphi^{r} w_{\alpha} \|_{L^p(t_0,t_1)}.
	\end{multline}
	On $[t_0,t_1]$ $w_{\alpha}(x) \varphi^r(x) \sim w_{\alpha}(t_0) \varphi^r(t_0)$ holds, so
	\begin{equation} \label{495}
		\| (g_2 - g_1)^{(k)}_{\gamma} \varphi^{r} w_{\alpha} \|_{L^p(t_0,t_1)} \leq C w_{\alpha}(t_0) \varphi^r(t_0) \| (g_2 - g_1)^{(k)}_{\gamma} \|_{L^p(t_0,t_1)}.
	\end{equation}
	At first let $1 \leq p < \infty$: with the substitution $\gamma(x) =: y$ with the help of the statement \ref{17}:
	\begin{equation} \label{496}
		\| (g_2 - g_1)^{(k)}_{\gamma} \|_{L^p(t_0,t_1)}^p = \int_{\gamma(t_0)}^{\gamma(t_1)} |((g_2 - g_1) \circ \gamma^{-1})^{(k)}(y)|^p (\gamma^{-1})'(y) \, \mathrm{d}y.
	\end{equation}
%	\begin{multline} \label{496}
%		\| (g_2 - g_1)^{(k)}_{\gamma} \|_{L^p(t_0,t_1)}^p = \int_{t_0}^{t_1} |(g_2 - g_1)^{(k)}_{\gamma}(x)|^p \, \mathrm{d}x = \\
%		= \int_{\gamma(t_0)}^{\gamma(t_1)} |(g_2 - g_1)^{(k)}_{\gamma} (\gamma^{-1}(y)) |^p \, \mathrm{d} \gamma^{-1}(y) = \int_{\gamma(t_0)}^{\gamma(t_1)} |((g_2 - g_1) \circ \gamma^{-1})^{(k)}(y)|^p \, \mathrm{d} \gamma^{-1}(y) = \\
%		= \int_{\gamma(t_0)}^{\gamma(t_1)} |((g_2 - g_1) \circ \gamma^{-1})^{(k)}(y)|^p (\gamma^{-1})'(y) \, \mathrm{d}y.
%	\end{multline}
	We could do this transformation because of the proper choosing of $t$ the derivative $(\gamma^{-1}(y))'$ is finite. %Ez utóbbi átalakítást megtehetjük, mivel $t$ megfelelő választása miatt
%	\begin{equation} \label{498}
%		t_1 = \frac{t_1}{t_0} t_0 \leq \frac{1 + 2 \sqrt{A_1}}{2 \sqrt{A_1}} 4 A_1 r^2 (\gamma^{-1}(t))^2 < \frac{a_1}{2},
%	\end{equation}
%	így $\gamma(t_1) < \gamma \left( \frac{a_1}{2} \right)$, továbbá ha $\gamma(t_0) < y < \gamma(t_1) < \gamma \left( \frac{a_1}{2} \right)$, akkor
%	\begin{multline} \label{499}
%		(\gamma^{-1}(y))' = \frac{1}{\gamma'(\gamma^{-1}(y))} \leq \frac{1}{\gamma'(\gamma^{-1}(\gamma(t_0)))} = \frac{1}{\gamma'(t_0)} \leq \\
%		\leq \frac{1}{\gamma'(t_0)} = \frac{1}{\sum_{k=1}^{N} \beta_k a_k^{\beta_k - 1}},
%	\end{multline}
%	ahol felhasználtuk, hogy $\gamma'(\gamma^{-1}(y))$ szigorúan monoton nő, ha $0 < \gamma^{-1}(y) < a_1$, és most az előzőek szerint ez teljesül. (\ref{499}) szerint tehát (\ref{496}) tovább becsülhető:
	So by (\ref{496}) there is a constant $C$ such that
%	\begin{multline} \label{500}
%		\int_{\gamma(t_0)}^{\gamma(t_1)} |((g_2 - g_1) \circ \gamma^{-1})^{(k)}(y)|^p (\gamma^{-1})'(y) \, \mathrm{d}y \leq C \int_{\gamma(t_0)}^{\gamma(t_1)} |((g_2 - g_1) \circ \gamma^{-1})^{(k)}(y)|^p \, \mathrm{d}y \leq \\
%		\leq C^p \| ((g_2 - g_1) \circ \gamma^{-1})^{(k)} \|_{L^p(\gamma(t_0),\gamma(t_1))}^p,
%	\end{multline}
%	vagyis
	\begin{equation} \label{501}
		\| (g_2 - g_1)^{(k)}_{\gamma} \|_{L^p(t_0,t_1)} \leq C \| ((g_2 - g_1) \circ \gamma^{-1})^{(k)} \|_{L^p(\gamma(t_0),\gamma(t_1))}.
	\end{equation}
%	The resulted expression we estimate again with the lemma \ref{359}:
%	\begin{multline} \label{502}
%		\| ((g_2 - g_1) \circ \gamma^{-1})^{(k)} \|_{L^p(\gamma(t_0),\gamma(t_1))} \leq \\
%		\leq M(r,k) \| (g_2 - g_1) \circ \gamma^{-1} \|_{L^p(\gamma(t_0),\gamma(t_1))} \cdot (\gamma(t_1) - \gamma(t_0))^{-k} + \\
%		+ M(r,k) \| ((g_2 - g_1) \circ \gamma^{-1})^{(r)} \|_{L^p(\gamma(t_0),\gamma(t_1))} \cdot (\gamma(t_1) - \gamma(t_0))^{r-k}.
%	\end{multline}
	For the given expression let us use the lemma \ref{359}, using (\ref{495}) and (\ref{501}) estimations we get, that in the expression (\ref{494})
	\begin{multline} \label{503}
		\sum_{k=0}^{r} \binom{r}{k} \frac{t^r}{(\gamma(t_1) - \gamma(t_0))^{r-k}} \| (g_2 - g_1)^{(k)}_{\gamma} \varphi^{r} w_{\alpha} \|_{L^p(t_0,t_1)} \\
%		\leq C \sum_{k=0}^{r} \frac{t^r w_{\alpha}(t_0) \varphi^{r}(t_0)}{(\gamma(t_1) - \gamma(t_0))^{r-k}} \| ((g_2 - g_1) \circ \gamma^{-1})^{(k)} \|_{L^p(\gamma(t_0),\gamma(t_1))} \leq \\
		\leq C \frac{t^r w_{\alpha}(t_0) \varphi^{r}(t_0)}{(\gamma(t_1) - \gamma(t_0))^{r}} \| (g_2 - g_1) \circ \gamma^{-1} \|_{L^p(\gamma(t_0),\gamma(t_1))} \\
		+ C t^r w_{\alpha}(t_0) \varphi^{r}(t_0) \| ((g_2 - g_1) \circ \gamma^{-1})^{(r)} \|_{L^p(\gamma(t_0),\gamma(t_1))}.
	\end{multline}
	In the integral of the first term let us change the variables: $x = \gamma^{-1}(y)$, then
	\begin{equation} \label{504}
		\| (g_2 - g_1) \circ \gamma^{-1} \|_{L^p(\gamma(t_0),\gamma(t_1))}^p = \int_{t_0}^{t_1} |(g_2 - g_1)(x)|^p \gamma'(x) \, \mathrm{d}x.
	\end{equation}
%	\begin{multline} \label{504}
%		\| (g_2 - g_1) \circ \gamma^{-1} \|_{L^p(\gamma(t_0),\gamma(t_1))}^p = \int_{\gamma(t_0)}^{\gamma(t_1)} |(g_2 - g_1) \circ \gamma^{-1}(y)|^p \, \mathrm{d}y = \\
%		= \int_{t_0}^{t_1} |(g_2 - g_1)(x)|^p \, \mathrm{d} \gamma(x) = \int_{t_0}^{t_1} |(g_2 - g_1)(x)|^p \gamma'(x) \, \mathrm{d}x.
%	\end{multline}
	We could do this substitution, because we chose $t$ suitable, furthermore due to the fine choosing we get $t_0 < x < t_1 < \frac{a_1}{2}$, and $\gamma'$ is finite on $\left[ 0, \frac{a_1}{2} \right]$. So there is a constant $C$ such that by (\ref{504}): % z utóbbi átalakítást megtehettük, mivel $t_0 < x < t_1 < \frac{a_1}{2}$ (\ref{499}) mert t-t jól választottuk szerint, és a $\left[ 0, \frac{a_1}{2} \right]$ intervallumon $\gamma'$ nem végtelen. Sőt, mivel itt $\gamma'$ szigorúan monoton nő, ezért
%	\begin{equation} \label{505}
%		\gamma'(x) \leq \gamma' \left( \frac{a_1}{2} \right) = \sum_{k=1}^{N} \beta_k \left( a_k - \frac{a_1}{2} \right)^{\beta_k - 1},
%	\end{equation}
%	így \aref({504}) becslés alapján
	\begin{equation} \label{506}
		\| (g_2 - g_1) \circ \gamma^{-1} \|_{L^p(\gamma(t_0),\gamma(t_1))} \leq C \| g_2 - g_1 \|_{L^p(t_0,t_1)}.
	\end{equation}
	Similarly, in the integral of the second term in (\ref{503}) after the application of the statement \ref{17} let us change the variables $x := \gamma^{-1}(y)$:
	\begin{equation*} %\label{507}
		\| ((g_2 - g_1) \circ \gamma^{-1})^{(r)} \|_{L^p(\gamma(t_0),\gamma(t_1))}^p = \int_{t_0}^{t_1} |(g_2 - g_1)^{(r)}_{\gamma}(x)|^p \, \mathrm{d} \gamma(x),
	\end{equation*}
%	\begin{multline} \label{507}
%		\| ((g_2 - g_1) \circ \gamma^{-1})^{(r)} \|_{L^p(\gamma(t_0),\gamma(t_1))}^p = \int_{\gamma(t_0)}^{\gamma(t_1)} |((g_2 - g_1) \circ \gamma^{-1})^{(r)}(y)|^p \, \mathrm{d}y = \\
%		= \int_{\gamma(t_0)}^{\gamma(t_1)} |(g_2 - g_1)^{(r)}_{\gamma}(\gamma^{-1}(y))|^p \, \mathrm{d}y = \int_{t_0}^{t_1} |(g_2 - g_1)^{(r)}_{\gamma}(x)|^p \, \mathrm{d} \gamma(x) \leq \\
%		\leq C^p \int_{t_0}^{t_1} |(g_2 - g_1)^{(r)}_{\gamma}(x)|^p \, \mathrm{d}x,
%	\end{multline}
	so
	\begin{equation} \label{508}
		\| ((g_2 - g_1) \circ \gamma^{-1})^{(r)} \|_{L^p(\gamma(t_0),\gamma(t_1))} \leq C \| (g_2 - g_1)^{(r)}_{\gamma} \|_{L^p(t_0,t_1)}.
	\end{equation}
	Furthermore since $\gamma^{-1}$ has the Lipschitz property on $[t_0,t_1]$ and $t < \gamma(a_1)$, we get by (\ref{200}) and inequality (\ref{486}) that
	\begin{multline} \label{509}
		\frac{t^r \varphi^r(t_0)}{(\gamma(t_1) - \gamma(t_0))^r} \leq \underbrace{\left( \frac{t_1 - t_0}{\gamma(t_1) - \gamma(t_0)} \right)^r}_{\text{bounded}} \cdot \left( \frac{t \sqrt{t_0}}{t_1 - t_0} \right)^r \\
		\leq C \left( \frac{\gamma(a_1)}{a_1} \right)^r \left( \frac{\gamma^{-1}(t) \sqrt{t_0}}{t_1 - t_0} \right)^{r} \leq C \left( \frac{\gamma(a_1) 2r}{a_1} \right)^r.
	\end{multline}
%	ugyanis $t < \gamma(a_1)$, ezért \aref({200}) szerint $t \leq \frac{\gamma(a_1)}{a_1} \cdot \gamma^{-1}(t)$. Figyelmbe véve \aref({486}) egyenlőtlenséget is, azt kapjuk, hogy
%	\begin{equation} \label{510}
%		\frac{t^r \varphi^r(t_0)}{(\gamma(t_1) - \gamma(t_0))^r} \leq C \left( \frac{\gamma(a_1) 2r}{a_1} \right)^r.
%	\end{equation}
	Since if $x \in [t_0,t_1]$, $w_{\alpha}(t_0) \sim w_{\alpha}(x)$ and $w_{\alpha}(x) \varphi^r(x) \sim w_{\alpha}(t_0) \varphi^r(t_0)$, thus we get for the estimation of (\ref{503}) by (\ref{506}), (\ref{508}) and (\ref{509}), that
	\begin{multline*} %\label{511}
		\sum_{k=0}^{r} \binom{r}{k} \frac{t^r}{(\gamma(t_1) - \gamma(t_0))^{r-k}} \| (g_2 - g_1)^{(k)}_{\gamma} \varphi^{r} w_{\alpha} \|_{L^p(t_0,t_1)} \\
		\leq C \| (g_2 - g_1) w_{\alpha} \|_{L^p(t_0,t_1)} + C t^r \| (g_2 - g_1)^{(r)}_{\gamma} \varphi^r w_{\alpha} \|_{L^p(t_0,t_1)}.
	\end{multline*}
%	\begin{multline} \label{511}
%		\leq C w_{\alpha}(t_0) \| g_2 - g_1 \|_{L^p(t_0,t_1)} + C t^r w_{\alpha}(t_0) \varphi^r(t_0) \| (g_2 - g_1)^{(r)}_{\gamma} \|_{L^p(t_0,t_1)} \leq \\
%		\leq C \| (g_2 - g_1) w_{\alpha} \|_{L^p(t_0,t_1)} + C t^r \| (g_2 - g_1)^{(r)}_{\gamma} \varphi^r w_{\alpha} \|_{L^p(t_0,t_1)}.
%	\end{multline}
	Returning to (\ref{494}) we get that
	\begin{multline*} %\label{512}
			t^r \| g^{(r)}_{\gamma} \varphi^{r} w_{\alpha} \|_{L^p(t_0,t_1)} \\ %t^r \| (g_1)^{(r)}_{\gamma} \varphi^{r} w_{\alpha} \|_{L^p(t_0,t_1)} + \\
		%	+ C \left( \| (g_2 - g_1) w_{\alpha} \|_{L^p(t_0,t_1)} + t^r \| (g_2 - g_1)^{(r)}_{\gamma} \varphi^r w_{\alpha} \|_{L^p(t_0,t_1)} \right) \leq \\
			\leq t^r \| (g_1)^{(r)}_{\gamma} \varphi^{r} w_{\alpha} \|_{L^p(0,t_1)} + C \left( \| (f-g_1) w_{\alpha} \|_{L^p(t_0,t_1)} + \| (f-g_2) \|_{L^p(t_0,t_1)} \right) \\
			+ C \left( t^r \| (g_2)^{(r)}_{\gamma} \varphi^r w_{\alpha} \|_{L^p(t_0,t_1)} + t^r \| (g_1)^{(r)}_{\gamma} \varphi^r w_{\alpha} \|_{L^p(t_0,t_1)} \right) \\
		%	\leq t^r \| (g_1)^{(r)}_{\gamma} \varphi^{r} w_{\alpha} \|_{L^p(0,t_1)} + C \left( \| (f-g_1) w_{\alpha} \|_{L^p(0,t_1)} + \| (f-g_2) \|_{L^p(t_0,t_{j+1})} \right) + \\
		%	+ C \left( t^r \| (g_2)^{(r)}_{\gamma} \varphi^r w_{\alpha} \|_{L^p(t_0,t_{j+1})} + t^r \| (g_1)^{(r)}_{\gamma} \varphi^r w_{\alpha} \|_{L^p(0,t_1)} \right) \leq \\
			\leq C \left( \| (f-g_1) w_{\alpha} \|_{L^p(0,t_1)} + t^r \| (g_1)^{(r)}_{\gamma} \varphi^r w_{\alpha} \|_{L^p(0,t_1)} \right) \\
			+ C \left( \| (f-g_2) \|_{L^p(t_0,t_{j+1})} + t^r \| (g_2)^{(r)}_{\gamma} \varphi^r w_{\alpha} \|_{L^p(t_0,t_{j+1})} \right).
	\end{multline*}
	Similarly we get that
	\begin{multline*} %\label{513}
		t^r \| g^{(r)}_{\gamma} \varphi^{r} w_{\alpha} \|_{L^p(t_j,t_{j+1})} \leq C \left( \| (f-g_2) w_{\alpha} \|_{L^p(t_0,t_{j+1})} + t^r \| (g_2)^{(r)}_{\gamma} \varphi^r w_{\alpha} \|_{L^p(t_0,t_{j+1})} \right) \\
			+ C \left( \| (f-g_3) \|_{L^p(t_j,\infty)} + t^r \| (g_3)^{(r)}_{\gamma} \varphi^r w_{\alpha} \|_{L^p(t_j,\infty)} \right).
	\end{multline*}
	Let us substitute these estimates to (\ref{491}), then the given inequlatities let us add to (\ref{490}):
	\begin{multline*} %\label{514}
		\| (f-g)w_{\alpha} \|_{L^p} + t^r \| g_{\gamma}^{(r)} \varphi^r w_{\alpha} \|_{L^p} \\
		\leq C \left( \| (f-g_1) w_{\alpha} \|_{L^p(0,t_1)} + t^r \| (g_1)^{(r)}_{\gamma} \varphi^r w_{\alpha} \|_{L^p(0,t_1)} \right) \\
			+ C \left( \| (f-g_2) \|_{L^p(t_0,t_{j+1})} + t^r \| (g_2)^{(r)}_{\gamma} \varphi^r w_{\alpha} \|_{L^p(t_0,t_{j+1})} \right) \\
			+ C \left( \| (f-g_3) \|_{L^p(t_j,\infty)} + t^r \| (g_3)^{(r)}_{\gamma} \varphi^r w_{\alpha} \|_{L^p(t_j,\infty)} \right).
	\end{multline*}
	As $g_1,g_2,g_3$ are arbitrary functions in $W_{\gamma}^{r,p}$ on the corresponding interval, by the definition of the $K$-functional we get from this that
	\begin{multline} \label{515}
		K_{\gamma,r,\varphi}(f,t^r,L^p_{w_{\alpha}},W_{\gamma}^{r,p}) %\leq \\
		\leq C K_{\gamma,r,\varphi}(f,t^r,L^p_{w_{\alpha}}(0,t_1),W_{\gamma}^{r,p}(0,t_1)) \\
		+ C K_{\gamma,r,\varphi}(f,t^r,L^p_{w_{\alpha}}(I_{rt,\gamma}''),W_{\gamma}^{r,p}(I_{rt,\gamma}'')) \\
		+ C K_{\gamma,r,\varphi} \left(f,t^r,L^p_{w_{\alpha}} \left(t_j,\infty \right),W_{\gamma}^{r,p}\left( t_j,\infty \right) \right).
	\end{multline}
	However, due to the fine choosing of $t$ it is easy to check that $t_1 \leq 4 A_1' r^2 (\gamma^{-1}(t))^2$ and $t_j \geq \frac{A_2'}{(\gamma^{-1}(t))^2}$, so we get by these remarks from (\ref{515}) exactly the inequality (\ref{487}).
%	\begin{equation} \label{516}
%		t_1 = \frac{t_1}{t_0} t_0 \leq \frac{1 + 2 \sqrt{A_1}}{2 \sqrt{A_1}} t_0 = 2 \sqrt{A_1} (1 + 2 \sqrt{A_1}) r^2 (\gamma^{-1}(t))^2 = 4 A_1' r^2 (\gamma^{-1}(t))^2,
%	\end{equation}
%	illetve
%	\begin{equation} \label{517}
%		t_j = \frac{t_j}{t_{j+1}} t_{j+1} \geq \frac{2 \sqrt{A_1}}{1 + 2 \sqrt{A_1}} \cdot \frac{A_2}{(\gamma^{-1}(t))^2} = \frac{A_2'}{(\gamma^{-1}(t))^2}.
%	\end{equation}
%	Eszerint
%	\begin{multline} \label{518}
%		K_{\gamma,r,\varphi}(f,t^r,L^p_{w_{\alpha}}(0,t_1),W_{\gamma}^{r,p}(0,t_1)) \leq \\
%		\leq K_{\gamma,r,\varphi}(f,t^r,L^p_{w_{\alpha}}(0,4A_1' r^2 (\gamma^{-1}(t))^2),W_{\gamma}^{r,p}(0,4A_1' r^2 (\gamma^{-1}(t))^2)),
%	\end{multline}
%	valamint
%	\begin{multline} \label{519}
%		K_{\gamma,r,\varphi} \left(f,t^r,L^p_{w_{\alpha}} \left(t_j,\infty \right),W_{\gamma}^{r,p}\left( t_j,\infty \right) \right) \leq \\
%		\leq K_{\gamma,r,\varphi} \left(f,t^r,L^p_{w_{\alpha}}\left( \textstyle \frac{A_2'}{(\gamma^{-1}(t))^2},\infty \right),W_{\gamma}^{r,p}\left( \textstyle \frac{A_2'}{(\gamma^{-1}(t))^2},\infty \right) \right).
%	\end{multline}
%	(\ref{515}) egyenlőtlenséget ezek alapján tovább becsülve éppen (\ref{487}) egyenlőtlenséget kapjuk.
	We can estimate the middle term of this with the first part of the proved theorem \ref{50}, because $K \leq \widetilde{K}$:
	\begin{equation} \label{520}
		K_{\gamma,r,\varphi}(f,t^r,L^p_{w_{\alpha}}(I_{rt,\gamma}''),W_{\gamma}^{r,p}(I_{rt,\gamma}'')) \leq C \sum_{n=1}^{r} t^{r-n} \Omega_{\gamma,\varphi}^n (f,t)_{w_{\alpha},p},
	\end{equation}
%	\begin{multline} \label{520}
%		K_{\gamma,r,\varphi}(f,t^r,L^p_{w_{\alpha}}(I_{rt,\gamma}''),W_{\gamma}^{r,p}(I_{rt,\gamma}'')) = \\
%		= \inf_{g \in W_{\gamma}^{r,p}(t_0,t_{j+1})} \left\{ \| (f-g) w_{\alpha} \|_{L^p(t_0,t_{j+1})} + t^r \| g^{(r)}_{\gamma} \varphi^r w_{\alpha} \|_{L^p(t_0,t_{j+1})} \right\} \leq \\
%		\leq \sup_{0 < h \leq t} \inf_{g \in W_{\gamma}^{r,p}(I_{rh,\gamma}'')} \left\{ \| (f-g) w_{\alpha} \|_{L^p(I_{rh,\gamma}'')} + h^r \| g^{(r)}_{\gamma} \varphi^r w_{\alpha} \|_{L^p(I_{rh,\gamma}'')} \right\} \leq \\
%		\leq C \sum_{n=1}^{r} t^{r-n} \Omega_{\gamma,\varphi}^n (f,t)_{w_{\alpha},p},
%	\end{multline}
	where these latter expressions we define with the help of interval $I_{rh,\gamma}'$.
	
	On the other hand, if $p \in P_{r-1}$, where $P_{r-1}$ is the space of the polynomials of degree at most $r$, then $p \circ \gamma \in W_{\gamma}^{r,p}$, because (\ref{452}) holds, no matter on which interval we look at the norm. So
	\begin{multline} \label{521}
		K_{\gamma,r,\varphi}(f,t^r,L^p_{w_{\alpha}}(0,4A_1' r^2 (\gamma^{-1}(t))^2),W_{\gamma}^{r,p}(0,4A_1' r^2 (\gamma^{-1}(t))^2)) \\
	%	\leq \inf_{p \in P_{r-1}} \left\{ \| (f - p \circ \gamma) w_{\alpha} \|_{L^p(0,4A_1' r^2 (\gamma^{-1}(t))^2)} + t^r \| (p \circ \gamma)^{(r)}_{\gamma} \varphi^r w_{\alpha} \|_{L^p(0,4A_1' r^2 (\gamma^{-1}(t))^2)} \right\} = \\
		\leq \inf_{p \in P_{r-1}} \| (f - p \circ \gamma) w_{\alpha} \|_{L^p(0,4A_1' r^2 (\gamma^{-1}(t))^2)},
	\end{multline}
	and we get the same way that
	\begin{multline} \label{522}
		K_{\gamma,r,\varphi} \left(f,t^r,L^p_{w_{\alpha}}\left( \textstyle \frac{A_2'}{(\gamma^{-1}(t))^2},\infty \right),W_{\gamma}^{r,p}\left( \textstyle \frac{A_2'}{(\gamma^{-1}(t))^2},\infty \right) \right) \\
		\leq \inf_{q \in P_{r-1}} \| (f - q \circ \gamma) w_{\alpha} \|_{L^p \left( \frac{A_2'}{(\gamma^{-1}(t))^2},\infty \right)}.
	\end{multline}
	By the inequalities (\ref{487}), (\ref{520}), (\ref{521}) and (\ref{522}) we get the inequality (\ref{482}), which we wanted to prove.
%	\begin{multline} \label{523}
%		K_{\gamma,r,\varphi}(f,t^r,L^p_{w_{\alpha}},W_{\gamma}^{r,p}) \leq \\
%		\leq C \inf_{p \in P_{r-1}} \| (f - p \circ \gamma) w_{\alpha} \|_{L^p(0,4A_1' r^2 (\gamma^{-1}(t))^2)} + C \inf_{q \in P_{r-1}} \| (f - q \circ \gamma) w_{\alpha} \|_{L^p \left( \frac{A_2'}{(\gamma^{-1}(t))^2},\infty \right)} + \\
%		+ C \sum_{n=1}^{r} t^{r-n} \Omega_{\gamma,\varphi}^n (f,t)_{w_{\alpha},p} \leq C \sum_{n=1}^{r} t^{r-n} \omega_{\gamma,\varphi}^n (f,t)_{w_{\alpha},p},
%	\end{multline}
%	amit bizonyítani akartunk.
	Since in the resulting estimation $C$ is independent of $p$, the statement is true for $p = \infty$ too.
	\end{lepess}
	
	\begin{lepess} \label{581}
		 We get a lower bound for $K_{\gamma,r,\varphi}(f,t^r,L^p_{w_{\alpha}},W_{\gamma}^{r,p})$. Let $A_1,A_2$ be constants as in the second section in the proof of theorem \ref{50} (when we proved (\ref{391})), and let $0 < t \leq T$ be such that $T$ satisfy the assumption (\ref{358a}) and the following:
	\begin{equation*} %\label{531a}
		T < \min \left\{ \textstyle \gamma \left( \sqrt{\frac{a_1}{8 A_1 r^2}} \right), \gamma \left( \sqrt{\frac{A_2}{a_N + 1}} \right), \gamma \left( \sqrt{\frac{A_2}{2 \alpha}} \right) \displaystyle \right\}.
	\end{equation*}
	We must keep the last condition only if $\alpha > 0$. By definition (\ref{480}) of $\omega_{\gamma,\varphi}^{r}(f,t)_{w_{\alpha,p}}$ it is sufficient to prove that all terms that determine the complete modulus of smoothness can be estimated with a constant-fold of the $\gamma$-$K$-functional. By the second part of theorem \ref{50}:
	\begin{equation} \label{524}
		\Omega_{\gamma,\varphi}^{r}(f,t)_{w_{\alpha},p} \leq C \cdot \widetilde{K}_{\gamma,r,\varphi}(f,t^r,L^p_{w_{\alpha}},W_{\gamma}^{r,p}) \leq C K_{\gamma,r,\varphi}(f,t^r,L^p_{w_{\alpha}},W_{\gamma}^{r,p}),
	\end{equation}
%	\begin{multline} \label{524}
%		\Omega_{\gamma,\varphi}^{r}(f,t)_{w_{\alpha},p} \leq C \cdot \widetilde{K}_{\gamma,r,\varphi}(f,t^r,L^p_{w_{\alpha}},W_{\gamma}^{r,p}) = \\
%		= C \sup_{0 < h \leq t} \inf_{g \in W_{\gamma}^{r,p}(I_{rh,\gamma})} \left\{ \| (f-g) w_{\alpha} \|_{L^p(I_{rh,\gamma})} + h^r \| g^{(r)}_{\gamma} \varphi^r w_{\alpha} \|_{L^p(I_{rh,\gamma})} \right\} \leq \\
%		\leq C \sup_{0 < h \leq t} \inf_{g \in W_{\gamma}^{r,p}} \left\{ \| (f-g) w_{\alpha} \|_{L^p(I_{rh,\gamma})} + h^r \| g^{(r)}_{\gamma} \varphi^r w_{\alpha} \|_{L^p(I_{rh,\gamma})} \right\} \leq \\
%		\leq C \sup_{0 < h \leq t} \inf_{g \in W_{\gamma}^{r,p}} \left\{ \| (f-g) w_{\alpha} \|_{L^p} + h^r \| g^{(r)}_{\gamma} \varphi^r w_{\alpha} \|_{L^p} \right\} = \\
%		= C \inf_{g \in W_{\gamma}^{r,p}} \left\{ \| (f-g) w_{\alpha} \|_{L^p} + t^r \| g^{(r)}_{\gamma} \varphi^r w_{\alpha} \|_{L^p} \right\} = C K_{\gamma,r,\varphi}(f,t^r,L^p_{w_{\alpha}},W_{\gamma}^{r,p}),
%	\end{multline}
	so it is sufficient to show that
	\begin{equation} \label{525}
		\inf_{p \in P_{r-1}} \| w_{\alpha} (f - p \circ \gamma) \|_{L^p(0,4A_1r^2 (\gamma^{-1}(t))^2)} \leq C K_{\gamma,r,\varphi}(f,t^r,L^p_{w_{\alpha}},W_{\gamma}^{r,p})
	\end{equation}
	and
	\begin{equation} \label{526}
		\inf_{q \in P_{r-1}} \| w_{\alpha} (f - q \circ \gamma) \|_{L^p\left( \frac{A_2}{(\gamma^{-1}(t))^2}, \infty \right)} \leq C K_{\gamma,r,\varphi}(f,t^r,L^p_{w_{\alpha}},W_{\gamma}^{r,p}).
	\end{equation}
	Let $1 < p < \infty$, $g \in W_{\gamma}^{r,p}$ be an arbitrary function. Then by statement \ref{17} the Taylor polynomial of degree $r-1$ related to the function $g \circ \gamma^{-1}$ with starting point $\gamma(4A_1 r^2 (\gamma^{-1}(t))^2)$ is well defined, let it be $\widetilde{T}_{r-1}$. Then
	\begin{multline} \label{527}
		\inf_{p \in P_{r-1}} \| w_{\alpha} (f - p \circ \gamma) \|_{L^p(0,4A_1r^2 (\gamma^{-1}(t))^2)} \leq \| w_{\alpha} (f - \widetilde{T}_{r-1} \circ \gamma) \|_{L^p(0,4A_1r^2 (\gamma^{-1}(t))^2)} \\
		\leq \| (f-g) w_{\alpha} \|_{L^p(0,4A_1r^2 (\gamma^{-1}(t))^2)} + \| (g - \widetilde{T}_{r-1} \circ \gamma) w_{\alpha} \|_{L^p(0,4A_1r^2 (\gamma^{-1}(t))^2)}.
	\end{multline}
	We can get with the help of the statement \ref{17} that $\widetilde{T}_{r-1} \circ \gamma(x) = \widetilde{T}_{\gamma,r-1}(x)$,
%	\begin{multline} \label{528}
%		\widetilde{T}_{r-1} \circ \gamma(x) = \sum_{k=0}^{r-1} \frac{(g \circ \gamma^{-1})^{(k)} (\gamma(4A_1 r^2 (\gamma^{-1}(t))^2))}{k!} (\gamma(x) - \gamma(4A_1 r^2 (\gamma^{-1}(t))^2))^k = \\
%		= \sum_{k=0}^{r-1} \frac{g_{\gamma}^{(k)}(4A_1 r^2 (\gamma^{-1}(t))^2)}{k!} (\gamma(x) - \gamma(4A_1 r^2 (\gamma^{-1}(t))^2))^k = \widetilde{T}_{\gamma,r-1}(x),
%	\end{multline}
	where $\widetilde{T}_{\gamma,r-1}$ is the generalized Taylor polynomial of degree $r-1$ related to $g$ with starting point $4A_1 r^2 (\gamma^{-1}(t))^2$, so by the statement \ref{31} we get
%	\begin{equation} \label{529}
%		g(x) - \widetilde{T}_{r-1} \circ \gamma(x) %= g(x) - \widetilde{T}_{\gamma,r-1}(x)
%		= \int_{4A_1 r^2 (\gamma^{-1}(t))^2}^{x} g_{\gamma}^{(r)}(u) \frac{(\gamma(x) - \gamma(u))^{r-1}}{(r-1)!} \, \mathrm{d} \gamma(u). 
%	\end{equation}
	\begin{multline} \label{530}
		\| (g - \widetilde{T}_{r-1} \circ \gamma) w_{\alpha} \|_{L^p(0,4A_1r^2 (\gamma^{-1}(t))^2)} \\
	%	= \left( \int_{0}^{4A_1r^2 (\gamma^{-1}(t))^2} \left| g(x) - \widetilde{T}_{r-1} \circ \gamma(x) \right|^p w_{\alpha}^p(x) \, \mathrm{d}x  \right)^{\frac{1}{p}} = \\
	%	=  \left( \int_{0}^{4A_1r^2 (\gamma^{-1}(t))^2} \left| \int_{4A_1 r^2 (\gamma^{-1}(t))^2}^{x} g_{\gamma}^{(r)}(u) \frac{(\gamma(x) - \gamma(u))^{r-1}}{(r-1)!} \, \mathrm{d} \gamma(u) \right|^p w_{\alpha}^p(x) \, \mathrm{d}x  \right)^{\frac{1}{p}} \leq \\%= \\
	%	= \left( \int_{0}^{4A_1r^2 (\gamma^{-1}(t))^2} \left| \int_{x}^{4A_1r^2 (\gamma^{-1}(t))^2} g_{\gamma}^{(r)}(u) \frac{(\gamma(u) - \gamma(x))^{r-1}}{(r-1)!} \, \mathrm{d} \gamma(u) \right|^p w_{\alpha}^p(x) \, \mathrm{d}x  \right)^{\frac{1}{p}} \leq \\
		\leq C \left( \int_{0}^{4A_1r^2 (\gamma^{-1}(t))^2} \left| \int_{0}^{4A_1r^2 (\gamma^{-1}(t))^2} g_{\gamma}^{(r)}(u) (\gamma(u) - \gamma(x))_+^{r-1} \, \mathrm{d} \gamma(u) \right|^p w_{\alpha}^p(x) \, \mathrm{d}x  \right)^{\frac{1}{p}}.
	\end{multline}
	By using the Minkowski integral inequality and the Hölder inequality:
	\begin{multline} \label{542}
		\| (g - \widetilde{T}_{r-1} \circ \gamma) w_{\alpha} \|_{L^p(0,4A_1r^2 (\gamma^{-1}(t))^2)} \\ %C \int_{0}^{4A_1r^2 (\gamma^{-1}(t))^2} \left( \int_{0}^{4A_1r^2 (\gamma^{-1}(t))^2} \left| g_{\gamma}^{(r)}(u) (\gamma(u) - \gamma(x))_+^{r-1} w_{\alpha}^p(x) \right|^p \, \mathrm{d} x \right)^{\frac{1}{p}} \, \mathrm{d} \gamma(u) = \\
	%	= C \int_{0}^{4A_1r^2 (\gamma^{-1}(t))^2} \left| g_{\gamma}^{(r)}(u) w_{\alpha}(u) \varphi^r(u) \right| \frac{1}{{w_{\alpha}(u)} \varphi^r(u)} \left( \int_{0}^{u} (\gamma(u) - \gamma(x))^{p(r-1)} w_{\alpha}^p(x) \, \mathrm{d} x \right)^{\frac{1}{p}} \, \mathrm{d} \gamma(u) \leq \\
		\leq C \underbrace{\left( \int_{0}^{4A_1r^2 (\gamma^{-1}(t))^2} \left| g_{\gamma}^{(r)}(u) w_{\alpha}(u) \varphi^r(u) \right|^p \gamma'(u) \, \mathrm{d} u \right)^{\frac{1}{p}}}_{I_1} \\
		\times \underbrace{\left( \int_{0}^{4A_1r^2 (\gamma^{-1}(t))^2} \frac{1}{{w_{\alpha}^q(u)} \varphi^{qr}(u)} \left( \int_{0}^{u} (\gamma(u) - \gamma(x))^{p(r-1)} w_{\alpha}^p(x) \, \mathrm{d} x \right)^{\frac{q}{p}} \gamma'(u) \, \mathrm{d} u \right)^{\frac{1}{q}}}_{I_2},
	\end{multline}
	where $\frac{1}{p} + \frac{1}{q} = 1$. The $\mathrm{d} \gamma(u) = \gamma'(u) \, \mathrm{d}u$ transformation is correct, because we chose $t$ such that $u \leq 4A_1r^2 (\gamma^{-1}(t))^2 < \frac{a_1}{2}$, so $\gamma'(u) \leq C$.
%	\begin{equation} \label{543}
%		u \leq 4A_1r^2 (\gamma^{-1}(t))^2 < \frac{a_1}{2},
%	\end{equation}
%	így (\ref{505}) szerint $\gamma'(u) \leq C$. Emiatt a kapott szorzat első tényezője:
	Because of this the first factor of the given product can be estimated as the following:
	\begin{equation} \label{544}
		I_1 \leq \| g_{\gamma}^{(r)} w_{\alpha} \varphi^r \|_{L^p(0,4A_1r^2 (\gamma^{-1}(t))^2)}.
	\end{equation}
%	\begin{multline} \label{544}
%		I_1 = \left( \int_{0}^{4A_1r^2 (\gamma^{-1}(t))^2} \left| g_{\gamma}^{(r)}(u) w_{\alpha}(u) \varphi^r(u) \right|^p \gamma'(u) \, \mathrm{d} u \right)^{\frac{1}{p}} \leq \\
%		\leq C \left( \int_{0}^{4A_1r^2 (\gamma^{-1}(t))^2} \left| g_{\gamma}^{(r)}(u) w_{\alpha}(u) \varphi^r(u) \right|^p \, \mathrm{d} u \right)^{\frac{1}{p}} = \| g_{\gamma}^{(r)} w_{\alpha} \varphi^r \|_{L^p(0,4A_1r^2 (\gamma^{-1}(t))^2)}.
%	\end{multline}
	In the second factor, since $\alpha \geq 0$, thus $w_{\alpha}(x) \leq C w_{\alpha}(u)$,
%	\begin{equation} \label{545}
%		w_{\alpha}(x) = x^{\alpha} e^{-x} \leq u^{\alpha} e^{-u} e^u \leq C w_{\alpha}(u),
%	\end{equation}
	because by the properly choosing of $t$ we have $e^{u} \leq e^{4A_1r^2 (\gamma^{-1}(t))^2} \leq e^{\frac{a_1}{2}}$. So
	\begin{multline*} %\label{546}
		\frac{1}{{w_{\alpha}^q(u)} \varphi^{qr}(u)} \left( \int_{0}^{u} (\gamma(u) - \gamma(x))^{p(r-1)} w_{\alpha}^p(x) \, \mathrm{d} x \right)^{\frac{q}{p}} \\
	%	\leq C^q \frac{w_{\alpha}^q(u)}{{w_{\alpha}^q(u)} \varphi^{qr}(u)} \left( \int_{0}^{u} (\gamma(u) - \gamma(x))^{p(r-1)} \, \mathrm{d} x \right)^{\frac{q}{p}} = \\
		\leq \frac{C^q}{\varphi^{qr}(u)} \left( \int_{0}^{\gamma(u)} (\gamma(u) - y)^{p(r-1)} \, \mathrm{d} \gamma^{-1}(y) \right)^{\frac{q}{p}} \\
	%	= \frac{C^q}{\varphi^{qr}(u)} \left( \int_{0}^{\gamma(u)} (\gamma(u) - y)^{p(r-1)} (\gamma^{-1})'(y) \, \mathrm{d} y \right)^{\frac{q}{p}} \leq \\
		\leq \frac{C^q}{\varphi^{qr}(u)} \left( \int_{0}^{\gamma(u)} (\gamma(u) - y)^{p(r-1)} \, \mathrm{d} y \right)^{\frac{q}{p}} \leq \frac{C^q}{\varphi^{qr}(u)} (\gamma(u))^{(p(r-1) + 1) \frac{q}{p}},
	\end{multline*}
	where we used again that $0 < y < \gamma(u) \leq \gamma \left( \frac{a_1}{2} \right)$, and $(\gamma^{-1})'$ is bounded on $\left[ 0, \gamma \left( \frac{a_1}{2} \right) \right]$.
	
	Since $u < \frac{a_1}{2} < a_1$, by (\ref{198}) there is a constant $C$ such that
	\begin{equation*} %\label{547}
		\frac{C^q}{\varphi^{qr}(u)} (\gamma(u))^{(p(r-1) + 1) \frac{q}{p}} \leq  C^q u^{\frac{qr}{2} - 1}.
	\end{equation*}
%	\begin{multline} \label{547}
%		\frac{C^q}{\varphi^{qr}(u)} (\gamma(u))^{(p(r-1) + 1) \frac{q}{p}} \leq \frac{C^q}{\varphi^{qr}(u)} \left( \frac{\gamma(a_1)}{a_1} \right)^{(p(r-1) + 1) \frac{q}{p}} u^{(p(r-1) + 1) \frac{q}{p}} \leq \\
%		\leq C^q u^{-\frac{rq}{2} + q (r-1) + \frac{q}{p}} = C^q u^{\frac{qr}{2} - 1}. 
%	\end{multline}
	So since $\gamma'(u) \leq C$, we get for the second factor that
	\begin{equation} \label{548}
		I_2 \leq C (\gamma^{-1}(t))^{\frac{r}{2}} \leq C t^r,
	\end{equation}
%	\begin{multline} \label{548}
%		I_2 \leq C \left( \int_{0}^{4 A_1 r^2 (\gamma^{-1}(t))^2} u^{\frac{qr}{2} - 1} \, \mathrm{d}u \right)^{\frac{1}{q}} = C (4 A_1 r^2 (\gamma^{-1}(t))^2)^{\frac{r}{2}} \leq C (\gamma^{-1}(t))^{\frac{r}{2}} \leq \\
%		\leq C \left( \frac{1}{\sum_{k=1}^{N}} \beta_k a_k^{\beta_k - 1} \right)^r t^r \leq C t^r,
%	\end{multline}
	because $t < \gamma(a_1)$ and (\ref{200}) is valid. We got by (\ref{530}), (\ref{542}), (\ref{544}) and (\ref{548}) that
	\begin{equation*} %\label{531}
		\| (g - \widetilde{T}_{r-1} \circ \gamma) w_{\alpha} \|_{L^p(0,4A_1r^2 (\gamma^{-1}(t))^2)} \leq C t^r \| w_{\alpha} \varphi^r g^{(r)}_{\gamma} \|_{L^p(0,4A_1r^2 (\gamma^{-1}(t))^2)},
	\end{equation*}
	so by (\ref{527})
	\begin{equation*} %\label{532}
		\inf_{p \in P_{r-1}} \| w_{\alpha} (f - p \circ \gamma) \|_{L^p(0,4A_1r^2 (\gamma^{-1}(t))^2)} %\leq \\
%		\leq C \| (f-g) w_{\alpha} \|_{L^p(0,4A_1r^2 (\gamma^{-1}(t))^2)} + C t^r \| w_{\alpha} \varphi^r g^{(r)}_{\gamma} \|_{L^p(0,4A_1r^2 (\gamma^{-1}(t))^2)} \leq \\
		\leq C \left(  \| (f-g) w_{\alpha} \|_{L^p} + t^r \| w_{\alpha} \varphi^r g^{(r)}_{\gamma} \|_{L^p} \right).
	\end{equation*}
	Since this is true for all $g \in W_{\gamma}^{r,p}$ functions, we proved (\ref{525}).
	
	We do similarly at the proof of (\ref{526}): let $1 < p < \infty$, $g \in W_{\gamma}^{r,p}$ be an arbitrary function, and $T_{r-1}$ is the Taylor polynomial of degree $r-1$ related to the function $g \circ \gamma^{-1}$ with starting point $\gamma \left( \frac{A_2}{(\gamma^{-1}(t))^2} \right)$. Then
	\begin{multline} \label{533}
		\inf_{q \in P_{r-1}} \| w_{\alpha} (f - q \circ \gamma) \|_{L^p\left( \frac{A_2}{(\gamma^{-1}(t))^2}, \infty \right)} \leq \| w_{\alpha} (f - T_{r-1} \circ \gamma) \|_{L^p\left( \frac{A_2}{(\gamma^{-1}(t))^2}, \infty \right)} \\
		\leq \| (f-g) w_{\alpha} \|_{L^p\left( \frac{A_2}{(\gamma^{-1}(t))^2}, \infty \right)} + \| (g - T_{r-1} \circ \gamma) w_{\alpha} \|_{L^p\left( \frac{A_2}{(\gamma^{-1}(t))^2}, \infty \right)}.
	\end{multline}
	By the statement \ref{17} we get $T_{r-1} \circ \gamma = T_{\gamma,r-1}$, where this latter function is the generalized Taylor polynomial of degree $r-1$ related to $g$ with starting point $\frac{A_2}{(\gamma^{-1}(t))^2}$. We get by the statement \ref{31} that
	\begin{equation} \label{534}
		g(x) - T_{r-1} \circ \gamma (x) = \mathcal{C}_1^r \int_{\frac{A_2}{(\gamma^{-1}(t))^2}}^{x} g_{\gamma}^{(r)}(u) \frac{(x - u)^{r-1}}{(r-1)!} \, \mathrm{d}u,
	\end{equation}
%	\begin{multline} \label{534}
%		g(x) - T_{r-1} \circ \gamma (x) = g(x) - T_{\gamma,r-1}(x) = \int_{\frac{A_2}{(\gamma^{-1}(t))^2}}^{x} g_{\gamma}^{(r)}(u) \frac{(\gamma(x) - \gamma(u))^{r-1}}{(r-1)!} \, \mathrm{d} \gamma(u) = \\
%		= \mathcal{C}_1^r \int_{\frac{A_2}{(\gamma^{-1}(t))^2}}^{x} g_{\gamma}^{(r)}(u) \frac{(x - u)^{r-1}}{(r-1)!} \, \mathrm{d}u,
%	\end{multline}
	and due to the proper choice of $t$ we have $\frac{A_2}{(\gamma^{-1}(t))^2} > a_N + 1$, and if $a_{N}+1 < u$, then $\gamma(u) = \mathcal{C}_1 u + \mathcal{C}_2$, so $\gamma(x) - \gamma(u) = \mathcal{C}_1(x-u)$ and $\mathrm{d} \gamma(u) = \mathcal{C}_1 \, \mathrm{d}u$.
	
	If $\alpha > 0$, then by the good choice for $t$ we have $2 \alpha < \frac{A_2}{(\gamma^{-1}(t))^2} < u < x$, thus the function $x \mapsto e^{-\frac{x}{2}} x^{\alpha}$ is strictly decreasing, and if $\alpha = 0$, then the strictly increasing property is automatically fulfilled. So $w_{\alpha}(x) \leq e^{- \frac{x}{2}} e^{\frac{u}{2}} w_{\alpha}(u)$,
%	\begin{equation} \label{535}
%		w_{\alpha}(x) = e^{- \frac{x}{2}} e^{- \frac{x}{2}} x^{\alpha} \leq e^{- \frac{x}{2}} e^{- \frac{u}{2}} u^{\alpha} = e^{- \frac{x}{2}} e^{\frac{u}{2}} w_{\alpha}(u),
%	\end{equation}
	and by (\ref{534}) and the Hölder inequality we have
	\begin{multline} \label{536}
		|(g(x) - T_{r-1} \circ \gamma(x)) w_{\alpha}(x)| \leq C e^{-\frac{x}{2}} \int_{\frac{A_2}{(\gamma^{-1}(t))^2}}^{x} \left( e^{\frac{u}{2}} (x-u)^{r-1} \right)^{\frac{1}{p} + \frac{1}{q}} |g^{(r)}_{\gamma}(u) w_{\alpha}(u)| \, \mathrm{d}u \\
		\leq C e^{-\frac{x}{2}} \left( \int_{\frac{A_2}{(\gamma^{-1}(t))^2}}^{x} e^{\frac{u}{2}} (x-u)^{r-1} \, \mathrm{d}u \right)^{\frac{1}{q}} \left( \int_{\frac{A_2}{(\gamma^{-1}(t))^2}}^{x} e^{\frac{u}{2}} (x-u)^{r-1} |g^{(r)}_{\gamma}(u) w_{\alpha}(u)|^p \, \mathrm{d}u \right)^{\frac{1}{p}}.
	\end{multline}
	Since
	\begin{equation*} %\label{537}
		\int_{\frac{A_2}{(\gamma^{-1}(t))^2}}^{x} e^{\frac{u}{2}} (x-u)^{r-1} \, \mathrm{d}u = e^{\frac{x}{2}} 2^r (r-1)!,
	\end{equation*}
%	\begin{multline} \label{537}
%		\int_{\frac{A_2}{(\gamma^{-1}(t))^2}}^{x} e^{\frac{u}{2}} (x-u)^{r-1} \, \mathrm{d}u = e^{\frac{x}{2}} \int_{\frac{A_2}{(\gamma^{-1}(t))^2}}^{x} e^{- \frac{x-u}{2}} (x-u)^{r-1} \, \mathrm{d}u = \\
%		= e^{\frac{x}{2}} \int_{0}^{x - \frac{A_2}{(\gamma^{-1}(t))^2}} e^{-\frac{y}{2}} y^{r-1} \, \mathrm{d}y \leq e^{\frac{x}{2}} \int_{0}^{\infty} e^{-\frac{y}{2}} y^{r-1} \, \mathrm{d}y = e^{\frac{x}{2}} 2^r (r-1)!,
%	\end{multline}
	we can estimate (\ref{536}):
	\begin{equation} \label{538}
		|(g(x) - T_{r-1} \circ \gamma(x)) w_{\alpha}(x)| \leq C e^{-\frac{x}{2p}} \left( \int_{\frac{A_2}{(\gamma^{-1}(t))^2}}^{x} e^{\frac{u}{2}} (x-u)^{r-1} |g^{(r)}_{\gamma}(u) w_{\alpha}(u)|^p \, \mathrm{d}u \right)^{\frac{1}{p}}.
	\end{equation}
	On the integration domain $\frac{A_2}{(\gamma^{-1}(t))^2} < u$ holds, by using that $t < \gamma(a_1)$ and (\ref{200}) there is a constant $C$ such that
%	\begin{equation} \label{539}
%		\sqrt{A_2} \leq \gamma^{-1}(t) \sqrt{u} \leq \frac{1}{\sum_{k=1}^{N} \beta_k a_k^{\beta_k - 1}} t \sqrt{u}
%	\end{equation}
%	holds, so
	\begin{equation} \label{540}
		\textstyle 1 \leq \frac{\gamma^{-1}(t) \sqrt{u}}{\sqrt{A_2}} \leq C t \sqrt{u}.
	\end{equation}
	By (\ref{538}) and (\ref{540}), with the Fubini theorem we get
	\begin{multline*} %\label{541}
		\| (g - T_{r-1} \circ \gamma) w_{\alpha} \|_{L^p\left( \frac{A_2}{(\gamma^{-1}(t))^2}, \infty \right)} \\
%		\leq C t^r \left( \int_{\frac{A_2}{(\gamma^{-1}(t))^2}}^{\infty} e^{-\frac{x}{2}}  \int_{\frac{A_2}{(\gamma^{-1}(t))^2}}^{\infty} e^{\frac{u}{2}} (x-u)_+^{r-1} |g^{(r)}_{\gamma}(u) \varphi^r(u) w_{\alpha}(u)|^p \, \mathrm{d}u \mathrm{d}x \right)^{\frac{1}{p}} = \\
		\leq C t^r \left( \int_{\frac{A_2}{(\gamma^{-1}(t))^2}}^{\infty} |g^{(r)}_{\gamma}(u) \varphi^r(u) w_{\alpha}(u)|^p \int_{u}^{\infty} e^{- \frac{x-u}{2}} (x-u)^{r-1} \, \mathrm{d}x \mathrm{d}u \right)^{\frac{1}{p}} \\
		\leq C t^r \| g^{(r)}_{\gamma} \varphi^r w_{\alpha} \|_{L^p\left( \frac{A_2}{(\gamma^{-1}(t))^2},\infty \right)},
	\end{multline*}
	because the inner integral is finite.% as (\ref{537}) is finite.
	\end{lepess}
	
	Finally by (\ref{533}):
	\begin{equation*} %\label{541a}
		\inf_{q \in P_{r-1}} \| w_{\alpha} (f - q \circ \gamma) \|_{L^p\left( \frac{A_2}{(\gamma^{-1}(t))^2}, \infty \right)} %\leq \\
	%	\leq C \| (f-g) w_{\alpha} \|_{L^p\left( \frac{A_2}{(\gamma^{-1}(t))^2}, \infty \right)} + C t^r \| g^{(r)}_{\gamma} \varphi^r w_{\alpha} \|_{L^p\left( \frac{A_2}{(\gamma^{-1}(t))^2},\infty \right)} \leq \\
		\leq C \left( \| (f-g) w_{\alpha} \|_{L^p} + t^r \| g^{(r)}_{\gamma} \varphi^r w_{\alpha} \|_{L^p} \right).
	\end{equation*}
	Since this estimation holds for all $g \in W_{\gamma}^{r,p}$ functions, we proved (\ref{526}) too, and by (\ref{524}), (\ref{525}) and (\ref{526}), with the definition of the complete $\gamma$-modulus of smoothness we get exactly (\ref{483}). Since the constats in the estimations were independent of $p$, the statement is true for $p = \infty$ too. For $p=1$ we can repeat the steps above word by word, the estimates hold also without the application of the Hölder inequality, so we get again (\ref{483}) for $p=1$.
%	Mivel ez minden $g \in W_{\gamma}^{r,p}$ függvényre igaz, (\ref{526})-ot is igazoltuk. (\ref{524}), (\ref{525}) és (\ref{526}) alapján, a teljes $\gamma$-simasági modulus definíciójából éppen (\ref{483}) adódik. Mivel a becslésekben szereplő konstansok függetlenek voltak $p$-től, az állítás $p = \infty$ esetén is igaz. $p=1$-re a fentiek szóról szóra megismételhetőek, a becslések Hölder-egyenlőtlenségek alkalmazása nélkül is működnek, és ismét \told(\ref{483})+at kapjuk, $p=1$-re.
\end{proof}

\section{Acknowledgements}

The author wishes to express his thanks to \'Agota P. Horváth for her many helpful suggestions and useful remarks in writing this
paper. Furthermore, many thanks to Andor Szab\'o for the grammatic verifying.

%%%%%%%%%%%%%%%%%%%%%%%%IRODALOMJEGYZÉK%%%%%%%%%%%%%%%%%%%%%%%%

\bibliography{altalanos_hivatkozas3}

\end{document}